\documentclass[a4paper, 12pt]{article}

\usepackage{Preambule}
\usepackage{crefprefix}
\usepackage{enuminthm}
\setenuminthmdefaultformats{\roman*)}{\roman*}
\renewcommand{\comp}{c}

\title{Filtered deformations of Lie groupoids}
\author{Paul Le Breton}
\date{}

\newtheorem{theoreme}{Theorem}[section]

\newtheorem{lemme}[theoreme]{Lemma}

\newtheorem{corollaire}[theoreme]{Corollary}

\newtheorem{proposition}[theoreme]{Proposition}

\theoremstyle{definition}
\newtheorem{definition}[theoreme]{Definition}

\theoremstyle{remark}
\newtheorem{remarque}[theoreme]{Remark}

\theoremstyle{remark}
\newtheorem{exemple}[theoreme]{Example}

\begin{document}
\maketitle

\begin{abstract}
Let $G \rightrightarrows \units G$ be a Lie groupoid, $\bundle A G \rightarrow \units G$ its Lie algebroid and $X_1, \dots, X_r$ a family of sections of $\bundle A G$ satisfying a Lie bracket generating condition of Hörmander type. We aim to build a pseudodifferential calculus allowing to study a Helffer-Nourrigat's conjecture on the groupoid $G$; in particular, we want differential operators of the form $\sum_{i = 1}^r X_i^2$ to have an invertible symbol. In this article we achieve the geometrical part of this construction by defining a "weighted" version of the deformation to the normal cone $\DNC(G, \units G) \rightrightarrows \units G \times \R_+$. Heuristically, we deform $G$ around $\units G$ with a "zoom" parameter $t \in \R_+$ by stretching $G$ by $t$ in the directions of the sections $X_i$, $t^2$ along $[X_i, X_j]$, $t^3$ along $[X_i, [X_j, X_k]]$ etc. In the case where $G = M \times M$ we recover a construction of Mohsen \cite{Mohsen24}, and when the structure is equiregular we recover a construction of van Erp-Yuncken \cite{vanErpYuncken16}.
\end{abstract}

\tableofcontents
\section*{Introduction}
\addcontentsline{toc}{section}{Introduction}
Since the introduction of pseudodifferential operators by Kohn-Nirenberg \cite{KohnNirenberg65}, many authors have built pseudodifferential calculi adapted to various singular cases; Lie groupoids happen to be a powerful and unifying tool for such constructions. Indeed, to any Lie groupoid $G \rightrightarrows \units G$ is naturally associated a pseudodifferential calculus, namely a graded family of operators together with a symbol map. This construction in full generality has been independantly developped by Nistor-Weinstein-Xu \cite{NistorWeinsteinXu} and Monthubert-Pierrot \cite{MonthubertPierrot}, formalizing the pioneering work of Connes \cite{Connes79} on foliations. Using the pair groupoid $M \times M \rightrightarrows M$, one recovers Kohn-Nirenberg's pseudodifferential operators on the manifold $M$. Using holonomy groupoids, Connes built a longitudinal calculus \cite{Connes79} on foliated manifolds; it is actually the first instance of the use of pseudodifferential calculus on a Lie groupoid. Later on Monthubert built the b-groupoid \cite{Monthubert} and recovered Melrose's b-calculus \cite{Melrose93} on manifolds with corners. Mazzeo's edge calculus \cite{Mazzeo91} and $\phi$-calculus \cite{MazzeoMelrose98} also both correspond to a groupoid, see for example \cite{DebordSkandalis19} and \cite{Rochon12}. In \cite{DebordLescureRochon15}, the authors used groupoids to build a cusp-type calculus on stratified pseudomanifolds.

In \cite{DebordSkandalis14}, Debord-Skandalis gave an alternative equivalent definition of the pseudodifferential calculus on a Lie groupoid $G \rightrightarrows \units G$. The previous approach viewed pseudodifferential operators as distributions on $G$. In \cite{DebordSkandalis14} the authors show that one can use instead smooth functions on a richer geometric object that can be called a "blowup groupoid", which comes from the deformation to the normal construction from algebraic geometry, see \cite{DebordSkandalis17}. Note that the use of such deformations is strongly inspired by the pioneer work of Hilsum-Skandalis \cite{HilsumSkandalis87}. Even in the case of the pair groupoid, the result of \cite{DebordSkandalis14} gives a new look on pseudodifferential operators and allows many generalisations. Indeed, it suffices to replace the deformation to the normal cone by another type of deformation to get a new class of operators that are adapted to some geometric constraints. This is, for instance, how van Erp-Yuncken \cite{vanErpYuncken16}, \cite{vanErpYuncken19} defined a pseudodifferential calculus on filtered manifolds. In \cite{AndroulidakisMohsenYuncken} Androulidakis-Mohsen-Yuncken pushed further this idea to prove Helffer-Nourrigat's conjecture, replacing $\DNC(G, \units G)$ by the holonomy groupoid of a singular foliation on $M \times \R_+$, as defined by Debord \cite{Debord01}, \cite{Debord01bis}. This holonomy groupoid is non Hausdorff in general, which generates some analytical technicalities. Later on, Mohsen \cite{Mohsen22}, \cite{Mohsen24} defined a slightly different deformation groupoid and used it to give a more natural proof of Helffer-Nourrigat's conjecture, see \cite{Mohsen26}. The main novelty in the approach of Mohsen was to change the space of units by "blowing-up" some points of $M \times \{0\}$ inside $M \times \R_+$; it allows to get a Hausdorff groupoid without loosing any geometric information. This deformation groupoid is thus of the form 
\begin{equation}
M \times M \times \R_+^* \sqcup \mathcal{G} \times \{0\} \rightrightarrows M \times \R_+^* \sqcup \blup(M) \times \{0\}.
\end{equation}
My research builds on Mohsen's recent work, which has opened up a new and rapidly developping field at the crossroads of noncommutative geometry, analysis of hypoelliptic operators and sub-Riemannian geometry.

\subsection*{Structure of the paper}

In this paper, we generalise Mohsen's construction to any Lie groupoid $G \rightrightarrows \units G$ as follows. Denote by $\bundle A G \rightarrow \units G$ the Lie algebroid of $G$ and choose a finite family of sections $X_1, \dots, X_r \in \ci(\units G, \bundle AG)$. Denote by $\module F^1$ the $\ci(\units G, \R)$-module generated by the $X_i$'s. Then define by induction $\module F^{k+1} \coloneqq \module F^k + [\module F^k, \module F^1]$ and assume that there is an integer $N \geq 1$ such that $\module F^N = \ci(\units G, \bundle A G)$; this is a generalisation of the so-called Hörmander's Lie bracket generating condition. Note that the modules $\module F^k$ may fail to be projective in general. We then build the deformation as follows.
\begin{itemize}
\item In \Cref{section deformation des unites} we define a topological "blowup" of $\units G \times \R_+$ together with a "blowdown map", denoted $\beta: \deformation{\units G}{\module F} \rightarrow \units G \times \R_+$, such that the restriction of $\beta$ to $\beta^{-1}(\units G \times \R_+^*)$ is a homeomorphism. To define $\deformation{\units G}{\module F}$, denote 
\begin{equation*}
\gr(\module F)_p \coloneqq \bigoplus_{k = 1}^N \frac{\module F^k}{\module F^{k-1} + I_p\module F^k}
\end{equation*}
for $p \in \units G$, $I_p$ denoting the functions vanishing at $p$. We define a set $\blowup{\units G}{\module F}_p \subset \Grass(\gr(\module F)_p)$ consisting on well chosen linear subspaces and we set $\deformation{\units G}{\module F} = \units G \times \R_+^* \sqcup_{p \in M} \blowup{\units G}{\module F}_p \times \{0\}$ and we endow this set with a Hausdorff locally compact topology.
\item In \Cref{section deformation algebroide} we build a vector bundle $\deformation{\bundle A G}{\module F} \rightarrow \deformation{\units G}{\module F}$ together with a "blowdown map" $\beta_{\bundle A G} : \deformation{\bundle A G}{\module F} \rightarrow \bundle A G \times \R_+$  which is a vector bundle morphism over $\beta$. This bundle is built so that any section of $\bundle A G \times \R_+$ of the form $t^iX$, with $X \in \module F^i$, lifts through $\beta_{\bundle A G}$. The fibers of $\deformation{\bundle A G}{\module F}$ are
\begin{align*}
\deformation{\bundle A G}{\module F}_{(p, t)} &= \bundle A_p G  &&\text{for} \; (p,t) \in \units G \times \R_+^*; \\
\deformation{\bundle A G}{\module F}_{(p, L, 0)} & = \gr(\module F)_p / L  & &\text{for} \; p \in \units G \; \text{and} \; L \in \blowup{\units G}{\module F}_p.
\end{align*}
Even though $\deformation{\units G}{\module F}$ has not a smooth manifold structure, we define a notion of "smooth" functions and of "smooth" sections of $\deformation{\bundle A G}{\module F}$, a Lie bracket $[\cdot, \cdot]$ and an analogue of an anchor map $\rho$ and show that $(\deformation{\bundle A G}{\module F}, [\cdot, \cdot], \rho)$ behaves as a Lie algebroid.
\item In \Cref{section deformation groupoide} we define our deformation groupoid by noticing that $\gr(\module F)_p$ has a canonical structure of nilpotent Lie algebra and that all $L \in \blowup{\units G}{\module F}_p$ are stable by Lie bracket. Denoting $\Gr(\module F)_p$ the simply connected Lie group integrating $\gr(\module F)_p$, we define a groupoid structure over $\deformation{\units G}{\module F}$ on
\begin{equation*}
\deformation{G}{\module F} = G \times \R_+^* \sqcup \bigsqcup_{p \in M} \bigsqcup_{L \in \blowup{\units G}{\module F}_p} \Gr(\module F)_p / \exp(L)  \times \{0\}.
\end{equation*}
We then endow this groupoid with a locally compact topology and with a set of "smooth" functions. We show that smooth sections of $\deformation{\bundle A G}{\module F}$ act equivariantly on the smooth functions on $\deformation{G}{\module F}$; even though we do not have a Lie groupoid structure, it is thus natural to think about $\deformation{\bundle A G}{\module F}$ as the Lie algebroid of $\deformation{G}{\module F}$.

\item In \Cref{section exemples} we develop variations of Baouendi-Grushin filtration on manifolds with boundary.
\end{itemize}

Note that our construction is a bit more general and includes the case where each generating vector field $X_i$ is weighted by a positive integer; it amounts to modify the definition of the modules $\module F^k$, see \Cref{example generateurs Hormander}. Our general setting is the data of an increasing sequence of locally finitely generated $\ci(\units G, \R)$-modules 
\begin{equation*}
\{0\} = \module F^0 \subseteq \module F^1 \subseteq \cdots \subseteq \module F^N = \ci(\units G, \bundle A G)
\end{equation*}
such that $[\module F^k, \module F^l] \subset \module F^{k+l}$ for all $k,l$; the above construction remains unchanged.

\subsection*{Acknoledgments}
I wish to express my gratitude to my PhD advisors Claire Debord and Omar Mohsen for their support and their numerous remarks during this work.

\subsection*{Notations and conventions}
All along the paper, we denote $\R_+ = [0, +\infty)$, $\R_+^* = (0, + \infty)$ and $\N = \{0, 1, 2, \dots \}$. For $I$ a finite set we will denote by $(e_i)_{i \in I}$ the canonical basis of $\R^I$, namely $e_i = (\delta_{i,j})_{j \in I} \in \R^I$. 

Let $M$ be a manifold, $p \in M$ and $X \in \cci(M, \bundle T M)$. We denote by $\exp(X)p$ the flow of $X$ at time 1 starting at the point $p$.

Let $\bundle E \rightarrow M$ be a smooth real vector bundle and $\module E$ a sub $\ci(M, \R)$-module of $\ci (M, \bundle E)$. We denote by $\module E_{\comp}$ the compactly supported sections of $\module E$ 
\begin{itemize}
\item[•] For $p \in M$ and $X \in \ci(M, \bundle E)$ we say that $X$ \textbf{locally belongs} to $\module E$ around $p$ if $X$ coincides with an element of $\mathcal{E}$ in restriction to a neighbourhood of $p$, or equivalently if there exists a function $f \in \ci(M, \R)$ with $f(p) = 1$ such that $fX \in \module E$.
\item[•] For $U \subseteq M$ an open subset and $(X_i)_{i \in I}$ a family of sections of $\bundle E$, we say that $\module E$ is \textbf{generated by} the $X_i$'s on $U$ if, for any $X \in \module E$, there exist functions $f_i \in \ci (U, \R)$ such that $X_{|U} = \sum_i f_i (X_i)_{|U}$.
\item[•] We say that $\module E$ is \textbf{locally finitely generated} if, for all $p \in M$, there exists an open neighbourhood $U$ of $p$ and a finite family $(X_i)_{i \in I}$ of sections of $\bundle E$ that generates $\module E$ over $U$.
\end{itemize}

Let $G \rightrightarrows \units G$ be a Lie groupoid. The range and source maps will always be denoted respectively by $r$ and $s$ and we denote by $G^{(2)}$ the set of composable pairs in $G$. In this article we will use the following notations.
\begin{itemize}
\item[•] For $U, V$ any pair of subsets of $\units G$ we set $G_U \coloneqq s^{-1}(U)$, $G^V \coloneqq r^{-1}(V)$ and $G_U^V \coloneqq G_U \cap G^V$.
\item[•] For $p,q \in \units G$ and $\gamma \in G_p^q$ we denote by $R_\gamma : G_q \rightarrow G_p$ and $ L_\gamma: G^p \rightarrow G^q$ the diffeomorphisms $R_\gamma(\gamma') = \gamma' \gamma$ and $L_\gamma(\gamma') = \gamma \gamma'$.
\item[•] We denote by $\bundle A G \rightarrow \units G$ the vector bundle given by $(\bundle A G)_p \coloneqq \Ker(ds)_p \subset (\bundle T G)_p$, $p \in \units G$.
\item[•] For $X \in \ci(\units G, \bundle A G)$ we define $X^G \in \ci(G, \Ker(ds))$ the section given by 
\begin{equation}\label{def extension equivariante}
X^G(\gamma) = dR_\gamma \cdot X(r(\gamma)).
\end{equation}
$X^G$ is called the \textbf{right equivariant extension} of $X$. For any pair $X, Y \in \ci(\units G, \bundle A G)$ we define their Lie bracket by
\begin{equation}\label{def crochet de Lie algebroid}
[X,Y] \coloneqq [X^G, Y^G]_{|\units G} \in \ci(\units G, \bundle A G).
\end{equation}
\item[•] Let $X \in \cci(\units G, \bundle AG)$ and $\gamma\in G$; we set 
\begin{equation}\label{def exponentielle}
\exp(X) \gamma\coloneqq \exp(X^G) \gamma.
\end{equation}

Note that since $X^G$ is tangent to the s-fibers one has $s(\exp(X) \gamma)= s(\gamma)$. Moreover denote $\rho\coloneqq dr_{|\bundle A G}: \bundle A G \rightarrow \bundle T M$; one has $r(\exp(X) \gamma) = \exp(\rho(X)) r(\gamma)$. Finally for any pair $(\gamma, \gamma') \in G^{(2)}$ one computes
\begin{equation}\label{eq multiplication groupoide flots}
( \exp(X) \gamma) \gamma' = \exp(X) (\gamma \gamma')
\end{equation}
by right invariance of $X$.
\item The triplet $(\bundle A G, [\cdot, \cdot], \rho)$ is called the \textbf{Lie algebroid} associated to $G$, see \Cref{definition algebroides}.
\end{itemize}

We will keep using \emph{right} equivariant vector fields in the case where $G \rightrightarrows \{1\}$ is a group. We warn the reader used to left invariant vector fields on Lie groups that a few formulas are different from the classical ones, see \eqref{eq BCH}, \eqref{formule ad}.

\section{Deformation of the unit space}\label{section deformation des unites}

\subsection{Localisation of filtrations}
Let $M$ be a smooth manifold and $\bundle E \rightarrow M$ a smooth vector bundle over $M$.

\begin{definition}\label{def filtration}
A (singular) \textbf{filtration} $\module F$ of depth $N \in \N$ of the bundle $\bundle E$ is an increasing family $\{0\} = \module F^0 \subseteq \module F^1 \subseteq \cdots \subseteq \module F^N = \ci(M, \bundle E)$ of sub $\ci(M, \R)$-modules of $\ci (M, \bundle E)$, such that each $\module F^k$ is locally finitely generated. We set $\module F^k = \ci(M, \bundle E)$ for $k \geq N$. 

\end{definition}

\begin{definition}
Let $\module F$ be a filtration of $\bundle E$ of depth $N$. The \textbf{graded localisation} of $\module F$ at $p \in M$ is the vector space
\begin{equation}
\gr(\module F)_p \coloneqq \bigoplus_{k = 1}^N \frac{\module F^k}{\module F^{k-1} + I_p\module F^k}
\end{equation}
where $I_p$ denotes the functions of $\ci(M, \R)$ vanishing at $p$ and $I_p \module F^k$ the linear span of sections of the form $f X$, $f \in I_p$ and $X \in \module F^k$. 

For $X \in \ci(M, \bundle E)$ a section that locally belongs to $\module F^k$ around $p$, we denote by $[X]_{p,k}$ the class of $X'$ in $\frac{\module F^k}{\module F^{k-1} + I_p\module F^k}$, where $X'$ is any section coinciding with $X$ in a neighbourhood of $p$ and such that $X' \in \module F^k$. Moreover we denote by $\alpha$ the linear action of $\R_+^*$ on $\gr(\module F)_p$ defined by
\begin{equation}\label{action de R sur la localisation}
\alpha_\lambda([X]_{p,k}) \coloneqq \lambda^k [X]_{p, k} \quad \forall k \geq 1 \; \forall X \in \module F^k \; \forall \lambda \in \R_+^*.
\end{equation}
\end{definition}

\begin{lemme}[\cite{AndroulidakisSkandalis09}]\label{lemme equivalence generateurs locaux}
Fix $k \geq 1$ and consider a family $(X_j)_{j \in J}$ of elements of $\module F^k$ and a point $p \in M$. The following are equivalent.
\begin{enuminthm}
\item\label{generateur du voisinage} There exists a neighbourhood $U$ of $p$ such that the family $(X_j)_{j \in J}$ generates $\module F^k$ over $U$.
\item\label{generateur du localise} The family $([X_j]_{p,k})_{j \in J}$ generates $\frac{\module F^k}{I_p \module F^k}$.
\end{enuminthm}

\end{lemme}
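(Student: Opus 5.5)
The plan is a Nakayama-type argument. Throughout, $[X_j]_{p,k}$ is read as the class of $X_j$ in $\module F^k / I_p \module F^k$, and a generating relation $X_{|U} = \sum_j f_j (X_j)_{|U}$ is understood as a finite sum. Since $\module F^k$ is locally finitely generated, I fix once and for all a neighbourhood $V$ of $p$ and sections $Y_1, \dots, Y_m \in \module F^k$ generating $\module F^k$ over $V$.

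\emph{(i) $\Rightarrow$ (ii).} Let $X \in \module F^k$, so that $X_{|U} = \sum_j f_j (X_j)_{|U}$ with $f_j \in \ci(U, \R)$. Choose a bump function $\varphi \in \ci(M, \R)$ supported in $U$ with $\varphi = 1$ near $p$, and let $g_j := \varphi f_j$, extended by zero, which lies in $\ci(M, \R)$. Then
\begin{equation*}
X = \sum_j f_j(p) X_j + \sum_j \bigl(g_j - f_j(p)\bigr) X_j + (1 - \varphi) X .
\end{equation*}
Here $g_j - f_j(p) \in I_p$ because $\varphi(p) = 1$, and $1 - \varphi \in I_p$. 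Both of the last two terms therefore lie in $I_p \module F^k$. Hence the class of $X$ modulo $I_p \module F^k$ is the finite linear combination $\sum_j f_j(p) [X_j]_{p,k}$, which proves (ii).

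\emph{(ii) $\Rightarrow$ (i).} By (ii), for each $l = 1, \dots, m$ I write
\begin{equation*}
Y_l = \sum_{j \in J_0} c_{lj} X_j + \sum_i h_{li} Z_{li}, \qquad c_{lj} \in \R, \; h_{li} \in I_p, \; Z_{li} \in \module F^k,
\end{equation*}
where $J_0 \subset J$ is a finite set, common to all $l$. On $V$ I expand each $Z_{li}$ in terms of the generators, $Z_{li} = \sum_{l'} a_{li l'} Y_{l'}$. Setting $A_{ll'} := \sum_i h_{li} a_{li l'}$ gives, on $V$, the matrix identity
\begin{equation*}
Y = C X + A Y, \qquad C = (c_{lj}), \quad A = (A_{ll'}) \in \ci\bigl(V, M_m(\R)\bigr), \quad A(p) = 0 .
\end{equation*}
Since $A(p) = 0$, the matrix $\mathrm{Id} - A$ is invertible on a smaller neighbourhood $U \subseteq V$ of $p$, with smooth inverse. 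There $Y = (\mathrm{Id} - A)^{-1} C X$, so each $Y_l$ is a $\ci(U, \R)$-combination of the finitely many $X_j$, $j \in J_0$. Any $X \in \module F^k$ is, on $U$, a combination of the $Y_l$, hence of the $X_j$. This proves (i).

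The main obstacle is the bookkeeping in the second step. One must pass from a global relation modulo $I_p \module F^k$ to a local matrix relation: the functions $h_{li}$ are global while the coefficients $a_{li l'}$ are only defined on $V$, and the determinant argument for inverting $\mathrm{Id} - A$ only works because the relation involves finitely many terms. Both points are controlled by the choice of local generators $Y_1, \dots, Y_m$ and of the finite subfamily $J_0$ made above.
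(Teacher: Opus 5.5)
Your proof is correct: the bump-function decomposition gives (i)$\Rightarrow$(ii), and for (ii)$\Rightarrow$(i) the local relation $Y = CX + AY$ with $A(p)=0$, inverted near $p$, is the standard Nakayama-type argument. The paper gives no proof of its own and cites Androulidakis--Skandalis, whose proof goes the same way; your reading of $[X_j]_{p,k}$ as the class modulo $I_p\module F^k$ (rather than modulo $\module F^{k-1}+I_p\module F^k$) is the one under which the statement makes sense.
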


Since each $\module F^k$ is locally finitely generated, it follows from \Cref{lemme equivalence generateurs locaux} that $\gr(\module F)_p$ is finite dimensional for all $p \in M$. 

\begin{definition}\label{def generateur locaux}
Let $U \subseteq M$ an open set, $ (X_i)_{i \in I}$ a finite family of sections of $\bundle E$ and $k: (X_i)_{i \in I} \rightarrow \N$ a map such that each $X_i$ belongs to $\module F^{k(X_i)}$. We say that $\mathbb{X} = (U, (X_i)_{i \in I}, k)$ is a \textbf{generating family} of the filtration $\module F$ if, for all $l =1, \dots, N$, the family $\{X_i\;|\;k(X_i) \leq l \}$ generates $\module F^l$ over $U$. The map $k$ will be called the \textbf{degree map}. In order to simplify the notations, we usually denote $k(i)$ for $k(X_i)$. 

To any generating family $\mathbb{X}$ is naturally associated the following family of linear maps:

\begin{alignat}{3}
\label{def bekar t non nul} &\text{for} \;(p,t) \in M \times \R_+^* \quad & \natural^{\mathbb X}_{p,t}:  \R^I &\rightarrow \bundle E_p \\
\notag &&e_ i &\mapsto t^{k(i)} X_i(p), \\
\label{def bekar t nul}&\text{for} \; (p,0) \in M \times \{0\}  \quad &\natural^{\mathbb X}_{p,0}:   \R^I &\rightarrow \gr(\module F)_p \\
\notag & &e_i &\mapsto [X_i]_{p,k(i)}
\end{alignat}
where $(e_i)_{i \in I}$ denotes the canonical basis of $\R^I$. Moreover for $t > 0$ and $v \in \R^I$ we denote by $\natural_t^{\mathbb X}(v) \in \ci(M, \bundle E)$ the section $\natural_t^{\mathbb X}(v)(p) \coloneqq \natural_{p,t}^{\mathbb X}(v)$. We will write $\natural_{p,t} = \natural_{p,t}^{\mathbb X}$ and $\natural_t = \natural_t^{\mathbb X}$ when there is no ambiguity on the generating family.
\end{definition}

\begin{remarque}\label{Remarque generateurs locaux semi cont} 
If $(U, (X_i)_{i \in I}, k)$ is a generating family, it follows from \Cref{lemme equivalence generateurs locaux} that the graded localisation $\gr(\module F)_p$ is generated by the family $([X_i]_{p,k(i)})_{i \in I}$ for all $p \in U$. Hence the maps $\natural_{p,0}$ are onto for $p \in U$; even though $\natural_{p,t}$ are defined for any $p \in M$, we will mainly use them when $p \in U$.
\end{remarque}

\begin{exemple}\label{exemple Baouendi classique}
Let $N \geq 2$, $M = \R^2$ and $\bundle E = \bundle T M$ and denote $(\partial_x,\partial_y) \coloneqq (\partial / \partial x, \partial / \partial y)$. The \textbf{Baouendi-Grushin filtration} of depth $N$ is defined by:
\begin{equation}
\module F^k = \left\{f(x,y) \partial_y + g(x,y) y^{N -k}\partial_x  \; | \; f, g \in \ci(M, \R) \right\} ,\quad k = 1, \dots, N.
\end{equation}

Let $p =(x_0, y_0)$. If $y_0 \neq 0$, note that $\partial_x$ locally belongs to $\module F^1$ around $p$; one then computes 
\begin{equation}\label{localisation Baouendi}
\begin{aligned}
\gr(\module F)_p = \begin{cases}
\Span([\partial_y]_{p, 1}, [\partial_x]_{p, 1}) &\text{if} \; y_0 \neq 0\\
\Span([\partial_y]_{p, 1}, [y^{N-1}\partial_x]_{p, 1}, [y^{N-2}\partial_x]_{p, 2}, \dots, [\partial_x]_{p, N}) & \text{if} \; y_0 = 0.
\end{cases}
\end{aligned}
\end{equation}

There are two interesting generating families to be considered in this case:
\begin{enuminthm}
\item\label{base adaptee en y non nul} The family $\mathbb{X} =(\R \times \R^*, (\partial_y, y^{N-1} \partial_x), k )$ with $k(\partial_y) = k(y^{N-1} \partial_x) = 1$. Note that, if $y_0 \neq 0$, $[y^{N-1}\partial_x]_{p,1} = y_0^{N-1} [\partial_x]_{p,1}$.\footnote{This equality does no longer make sense for $y_0 = 0$, since $\partial_x$ does not locally belong to $\module F^1$ around $(x_0, 0)$.}

\item\label{base adaptee en y nul} The family $\mathbb{Y} = (M, (\partial_y, y^{N-1}\partial_x, \dots, y\partial_x, \partial_x), k)$, $k(\partial_y) = 1$ and $k(y^{N-i}\partial_x) = i$ for $i = 1, \dots, N$. Note that, if $y_0 \neq 0$, $[y^{N-i}\partial_x]_{p,i} = 0$ for $i \geq 2$. 
\end{enuminthm}

\end{exemple}

\subsection{Blowup and weighted deformation}

For $E$ a finite dimensional vector space, we denote by $\Grass(E)$ the \textbf{grassmannian manifold}, which is the disjoint union of the manifolds 
\begin{equation}
\Grass_d(E) = \{L \leq E \; |\; \dim(L) = d\}
\end{equation}
for $d = 0, \dots, \dim(E)$. Recall that any onto map between vector spaces $f: E \rightarrow F$ induces the smooth embedding $\Grass_{\dim(F) - d}(F) \hookrightarrow \Grass_{\dim(E) - d}(E)$, $L \mapsto f^{-1}(L)$ for any $d \leq \dim(F)$. The image of this embedding is exactly the submanifold of $\Grass_{\dim(E) - d}(E)$ of all subspaces containing $\Ker(f)$.

The following construction will be useful in many proofs.

\begin{lemme}\label{lemme transitions}
Let $\mathbb{X} = (U, (X_i)_{i \in I}, k)$ and $\tilde{\mathbb{X}} = (\tilde U, (\tilde X_j)_{j \in J}, \tilde k)$ be two generating families. There exists a family of linear maps $T_{p,t} : \R^J \rightarrow \R^I$, depending smoothly on $(p,t) \in U \times \R_+$, satisfying 
\begin{equation}\label{eq prop transition}
\natural_{p,t}^{\tilde{\mathbb{X}}} = \natural_{p,t}^{\mathbb{X}} \circ T_{p,t} \quad \forall (p,t) \in U\times \R_+.
\end{equation}
Such a family will be called a transition family.
\end{lemme}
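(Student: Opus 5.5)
The plan is to write each $\tilde X_j$ in terms of the generators $X_i$ of the right degree, and to absorb the degree mismatch into a nonnegative power of $t$, so that the maps $T_{p,t}$ are polynomial in $t$ and smooth in $p$.

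\textbf{Construction.} Fix $j \in J$ and set $l \coloneqq \tilde k(j)$. Since $\tilde X_j \in \module F^{l}$ and $\mathbb X$ is a generating family, the family $\{X_i \;|\; k(i) \leq l\}$ generates $\module F^l$ over $U$. Hence there are functions $f_{ij} \in \ci(U, \R)$, for $k(i) \leq l$, such that
\begin{equation*}
(\tilde X_j)_{|U} = \sum_{k(i) \leq \tilde k(j)} f_{ij} (X_i)_{|U}.
\end{equation*}
Put $f_{ij} = 0$ when $k(i) > \tilde k(j)$, and define
\begin{equation*}
T_{p,t}(e_j) \coloneqq \sum_{i \in I} f_{ij}(p)\, t^{\tilde k(j) - k(i)} e_i .
\end{equation*}
All exponents occurring with a nonzero coefficient are $\geq 0$. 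So $T_{p,t}$ is polynomial in $t$ with coefficients smooth in $p$, and it depends smoothly on $(p,t) \in U \times \R_+$, including at $t = 0$.

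\textbf{Case $t>0$.} This is immediate:
\begin{equation*}
\natural^{\mathbb X}_{p,t}(T_{p,t} e_j) = \sum_i f_{ij}(p) t^{\tilde k(j)} X_i(p) = t^{\tilde k(j)} \tilde X_j(p) = \natural^{\tilde{\mathbb X}}_{p,t}(e_j).
\end{equation*}

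\textbf{Case $t=0$.} Only the terms with $k(i) = l$ survive, so $T_{p,0}(e_j) = \sum_{k(i)=l} f_{ij}(p) e_i$. We must therefore show
\begin{equation*}
[\tilde X_j]_{p,l} = \sum_{k(i) = l} f_{ij}(p)\,[X_i]_{p,l} \quad \text{in } \frac{\module F^l}{\module F^{l-1} + I_p \module F^l}.
\end{equation*}
Choose a cutoff $\chi \in \ci(M,\R)$ with support in $U$ and $\chi = 1$ near $p$, and consider the global section $Y \coloneqq \sum_{k(i) \leq l} (\chi f_{ij}) X_i$.
\begin{itemize}
\item[•] $Y \in \module F^l$, and the terms with $k(i) < l$ lie in $\module F^{l-1}$.
\item[•] For $k(i) = l$, write $\chi f_{ij} X_i = f_{ij}(p) X_i + (\chi f_{ij} - f_{ij}(p)) X_i$. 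The second term lies in $I_p \module F^l$.
\item[•] $\tilde X_j - Y \in \module F^l$ vanishes near $p$. Hence it equals $(1 - \chi')(\tilde X_j - Y)$ for a cutoff $\chi'$ equal to $1$ near $p$ and supported in the region where $\tilde X_j - Y$ vanishes, so it lies in $I_p \module F^l$.
\end{itemize}
These three facts give the required identity of classes, hence $\natural^{\mathbb X}_{p,0} \circ T_{p,0} = \natural^{\tilde{\mathbb X}}_{p,0}$.

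The only delicate point is this last local-to-global bookkeeping. The coefficients $f_{ij}$ are only defined on $U$, while the classes $[\cdot]_{p,l}$ are defined through global elements of $\module F^l$. The cutoff argument above handles it, including the observation that elements of $\module F^l$ vanishing near $p$ lie in $I_p\module F^l$. Note also that the statement does not require $p \in \tilde U$: each $\tilde X_j$ is a global element of $\module F^{\tilde k(j)}$, which is all the argument uses.
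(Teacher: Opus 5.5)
Your proof is correct and is the paper's construction: you expand $\tilde X_j$ over $U$ in the $X_i$ with $k(i)\le \tilde k(j)$ and set $T_{p,t}(e_j)=\sum_i t^{\tilde k(j)-k(i)}f_{ij}(p)e_i$; the paper only asserts the $t=0$ identity, which you verify with the cutoff argument. Your vanishing condition ($f_{ij}=0$ when $k(i)>\tilde k(j)$) is the right one, and the paper's text has this inequality reversed, which is a typo.
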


\begin{proof}
Consider a matrix $(a_{i,j})_{(i,j) \in I \times J}$ with $a_{i,j} \in \ci(U)$ such that $(\tilde X_j)_{|U} = \sum_i a_{i,j} (X_i)_{|U }$ for all $j \in J$ and $a_{i,j} = 0$ if $\tilde k(j) > k(i)$. Then the linear map given by $T_{p,t}(e_j) \coloneqq \sum_i t^{\tilde k(j) - k(i)} a_{i,j}(p) e_i$ satisfies \eqref{eq prop transition}.
\end{proof}

\begin{corollaire}\label{corollaire independance base adaptee} Consider a sequence $(p_n, t_n)_{n \in \N} \in (M \times \R_+^*)^\N$ such that $(p_n, t_n) \rightarrow (p,0)$ (in the topology of $M \times \R_+$) and a subspace $L \in \Grass(\gr(\module F)_p)$. Let $\mathbb{X} = (U, (X_i)_{i \in I}, k)$ and $\tilde{\mathbb{X}} =(\tilde U, (\tilde X_j)_{j \in J}, \tilde k)$ be two generating families such that $p \in U \cap \tilde U$. Then
\begin{equation}\label{equivalence limites generateurs}
\Ker(\natural_{p_n, t_n}^{\mathbb X}) \rightarrow (\natural_{p,0}^{\mathbb X})^{-1}(L) \; \text{in}\; \Grass(\R^I)  \Leftrightarrow  \Ker(\natural_{p_n, t_n}^{\tilde{\mathbb X}}) \rightarrow (\natural_{p,0}^{\tilde{\mathbb X}})^{-1}(L) \; \text{in}\; \Grass(\R^J).
\end{equation}

\end{corollaire}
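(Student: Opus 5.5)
The plan is to transport kernels from one generating family to the other using the transition families of \Cref{lemme transitions}, and to control limits in the Grassmannian through a transversality condition. By symmetry of the statement it is enough to prove one implication, say ``$\Rightarrow$''. Since $U \cap \tilde U$ is an open neighbourhood of $p$, we may assume $p_n \in U \cap \tilde U$ for all $n$. We fix a transition family $T_{p,t} : \R^J \rightarrow \R^I$ with $\natural^{\tilde{\mathbb X}}_{p,t} = \natural^{\mathbb X}_{p,t} \circ T_{p,t}$, which depends continuously on $(p,t) \in U \times \R_+$. We write $K_n \coloneqq \Ker(\natural^{\mathbb X}_{p_n,t_n})$ and $K \coloneqq (\natural^{\mathbb X}_{p,0})^{-1}(L)$.

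The first step is the pair of identities
\begin{equation*}
\Ker(\natural^{\tilde{\mathbb X}}_{p_n,t_n}) = T_{p_n,t_n}^{-1}(K_n), \qquad (\natural^{\tilde{\mathbb X}}_{p,0})^{-1}(L) = T_{p,0}^{-1}(K).
\end{equation*}
Both follow immediately from \eqref{eq prop transition}. The second step is transversality, namely $\Im(T_{p_n,t_n}) + K_n = \R^I$ and $\Im(T_{p,0}) + K = \R^I$.

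For $t>0$, a transition family $S$ in the opposite direction also exists, so the images of $\natural^{\mathbb X}_{p,t}$ and $\natural^{\tilde{\mathbb X}}_{p,t}$ in $\bundle E_p$ coincide. Hence for every $v \in \R^I$ there is some $u \in \R^J$ with $\natural^{\mathbb X}_{p,t}(T_{p,t}u) = \natural^{\mathbb X}_{p,t}(v)$, which means $v - T_{p,t}u \in K_n$.

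For $t=0$, \Cref{Remarque generateurs locaux semi cont} says that $\natural^{\tilde{\mathbb X}}_{p,0}$ is onto. The same argument then gives $\Im(T_{p,0}) + \Ker(\natural^{\mathbb X}_{p,0}) = \R^I$, and we have $\Ker(\natural^{\mathbb X}_{p,0}) \subseteq K$.

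The final step, which I expect to be the only real point, is a continuity lemma. On the set of pairs $(T,V) \in \mathrm{Hom}(\R^J,\R^I) \times \Grass_d(\R^I)$ with $\Im(T) + V = \R^I$, the map $(T,V) \mapsto T^{-1}(V)$ takes values in $\Grass_{|J| - |I| + d}(\R^J)$ and is continuous. Transversality fixes the dimension of $T^{-1}(V)$.

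To prove the lemma, I would work locally near a given $V_0$. Write nearby $V$ as the graph of a linear map $A : V_0 \rightarrow W$, where $W$ is a fixed complement of $V_0$. Then $T^{-1}(V) = \Ker(\pi_{A} \circ T)$, where $\pi_A : \R^I \rightarrow W$ is the projection along $V$; this projection depends continuously on $A$. The composite $\pi_A \circ T$ is surjective by transversality. Kernels of surjective linear maps depend continuously on the map, for instance via the continuity of the orthogonal projection $1 - M^*(MM^*)^{-1}M$.

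Convergence $K_n \rightarrow K$ in $\Grass(\R^I)$ forces $\dim K_n = \dim K$ eventually. Since $T_{p_n,t_n} \rightarrow T_{p,0}$, the lemma then gives $T_{p_n,t_n}^{-1}(K_n) \rightarrow T_{p,0}^{-1}(K)$, which is exactly the right-hand side of \eqref{equivalence limites generateurs}. The converse implication follows by exchanging the roles of $\mathbb X$ and $\tilde{\mathbb X}$.
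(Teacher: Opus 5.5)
Your proof is correct and follows the route the paper intends: the corollary is stated as an immediate consequence of the transition families of \Cref{lemme transitions}, and your transversality check combined with the continuity of $(T,V)\mapsto T^{-1}(V)$ on transversal pairs is what makes ``immediate'' rigorous. The one difference is in how transversality is handled. In the analogous argument (proof of \Cref{topologie independante de la base adaptee}) the paper extends $T_{p,t}$ by the identity on $\R^I$ to a surjection $\R^I\oplus\R^J\rightarrow\R^I$ and compares both families inside $\Grass(\R^I\oplus\R^J)$, so transversality is automatic; you instead keep $T_{p,t}:\R^J\rightarrow\R^I$ and derive transversality from the surjectivity of $\natural^{\tilde{\mathbb X}}_{p,t}$ onto $\bundle E_p$ and of $\natural^{\tilde{\mathbb X}}_{p,0}$ onto $\gr(\module F)_p$.
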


\begin{definition}\label{def ensemble blup} Let $\mathbb X = (U, (X_i)_{i \in I}, k)$ be a generating family and $p \in U$. Set
\begin{equation}
\begin{split}
\blowup{M}{\module F}_p \coloneqq \left\{L \in \Grass( \gr(\module F)_p)\;|\;\exists (p_n, t_n)_{n \in \N} ; (p_n,t_n) \rightarrow (p,0) \; \text{and} \vphantom{\natural_{p_n,t_n}^X} \right. \\
\left. \Ker(\natural_{p_n,t_n}^{\mathbb X}) \rightarrow (\natural_{p,0}^{\mathbb X})^{-1}(L) \right\}.
\end{split}
\end{equation}

Note that all subspace $L \in \blowup{M}{\module F}_p$ have codimension $\dim(\bundle E_p)$. By \Cref{corollaire independance base adaptee} the set $\blowup{M}{\module F}_p$ is independent of $\mathbb X$; set
\begin{equation}
\blowup{M}{\module F} \coloneqq \bigsqcup_{p \in M} \blowup{M}{\module F}_p.
\end{equation}
\end{definition}

\begin{definition}\label{def topo blup}
Let
\begin{equation}
\deformation{M}{\module F} = M \times \R_+^* \sqcup \blowup{M}{\module F} \times \{0\}
\end{equation} 
and $\beta : \deformation{M}{\module F} \rightarrow M \times \R_+$ the map given by $\beta(p,t) = (p,t)$ for $(p,t) \in M \times \R_+^*$ and $\beta(p,L,0) = (p,0)$ for $(p, L,0) \in \blowup{M}{\module F} \times \{0\}$. To define a topology on $\deformation{M}{\module F}$, let $\mathbb X = (U,(X_i)_{i \in I}, k)$ be a generating family and let:
\begin{equation}\label{def plongement dans la grassmannienne}
\begin{aligned}
\iota^{\mathbb X}: \beta^{-1}(U\times \R_+) & \hookrightarrow U \times \Grass(\R^I) \times \R_+&&\\
(p,t) & \mapsto (p, \Ker(\natural_{p,t}^{\mathbb X}), t) &&\text{if} \; t \neq 0\\
(p,L,0) & \mapsto (p,(\natural_{p,0}^{\mathbb X})^{-1}(L), 0)&& \text{else}.
\end{aligned}
\end{equation}

We will write $\iota = \iota^{\mathbb X}$ when there is no ambiguity on $\mathbb X$. We endow $\deformation{M}{\module F}$ with the coarsest topology such that:
\begin{enuminthm}
\item the subset $M \times \R_+^* \subset \deformation{M}{\module F}$ is open and endowed with its usual topology;
\item the sets $\beta^{-1}(U \times \R_+)$ are open and the maps $\iota^{\mathbb X}$ from \eqref{def plongement dans la grassmannienne} are continuous for every generating family $\mathbb X$.
\end{enuminthm}

We also endow the sets $\blowup{M}{\module F}_p$, $p \in M$ and $\blowup{M}{\module F}$ with the topologies induced by the one of $\deformation{M}{\module F}$.
\end{definition}

\begin{remarque}
Many authors use the notation $[M: N]$ for the blow-up of $M$ with respect to $N$, where $N \subset M$ is a sub-manifold of $M$, see \cite{Melrose09}. \emph{We warn the reader that $\deformation{M}{\module F}$ is not a blow-up of $M \times \R_+$ with respect to a submanifold, in the sense of Melrose}. Nevertheless, we think of $\deformation{M}{\module F}$ as a kind of "weighted blow-up" of $M \times \R_+$ around points of $M\times \{0\}$ where the filtration is "singular"\footnote{By "singular" we mean here that $\module F$ is not locally of the form of the example in \Cref{ex equiregulier}, or equivalently that the dimension of $\gr(\module F)_p$ is not locally constant.}.
\end{remarque}

\begin{proposition}\label{topologie independante de la base adaptee}
The space $\deformation{M}{\module F}$ is locally compact and Hausdorff. Moreover the maps $\iota$ are topological embeddings with closed image in $U \times \Grass(\R^I) \times \R_+$ and the map $\beta$ is a continuous surjection.
\end{proposition}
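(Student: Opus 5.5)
Fix a generating family $\mathbb X = (U,(X_i)_{i\in I},k)$. The plan is to establish four facts about $\iota = \iota^{\mathbb X}$ in turn: it is injective, its image is closed in $U\times\Grass(\R^I)\times\R_+$, it is a homeomorphism onto that image, and the coarsest topology is compatible with these local charts. Local compactness, Hausdorffness and the properties of $\beta$ then follow.

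\emph{Injectivity.} On $t>0$ injectivity is clear, since $(p,t)$ is recorded by $\iota$. On $t=0$, the map $\natural_{p,0}$ is onto by \Cref{Remarque generateurs locaux semi cont}, so $L = \natural_{p,0}\bigl((\natural_{p,0})^{-1}(L)\bigr)$ and $L$ is recovered from its preimage. The two strata are distinguished by the last coordinate.

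\emph{Closedness of the image.} Let $(p_n,K_n,t_n) = \iota(x_n)$ converge to $(p,K,t)$ in $U\times\Grass(\R^I)\times\R_+$.
\begin{itemize}
\item[•] If $t>0$, continuity of $(p,t)\mapsto\Ker\natural_{p,t}$ on the constant-rank locus gives $K = \Ker\natural_{p,t}$. Rank is constant there: $\natural_{p,t}$ is onto $\bundle E_p$, because $\module F^N$ is generated by all $X_i$ on $U$.
\item[•] If $t=0$ and infinitely many $t_n>0$, then the definition of $\blowup{M}{\module F}_p$ applies directly. One has to check that $K$ contains $\Ker\natural_{p,0}$, so that $K = (\natural_{p,0})^{-1}(L)$ with $L = \natural_{p,0}(K)$. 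I would prove $\Ker\natural_{p,0}\subseteq \lim\Ker\natural_{p_n,t_n}$ as follows. Take $v\in\Ker\natural_{p,0}$. For each degree $l$, the component $\sum_{k(i)=l} v_iX_i$ lies in $\module F^{l-1}+I_p\module F^l$, so it can be rewritten near $p$ with coefficient functions in the generators. This gives smooth $w(q,t)$ with $w(p,0)=v$ and $\natural_{q,t}(w(q,t)) = 0$, essentially via \Cref{lemme transitions} applied to a perturbation of the family. I expect this to be the main obstacle.
\item[•] If infinitely many $t_n=0$, we need closedness of $\blowup{M}{\module F}$ in the local chart. This follows by a diagonal argument: each $x_n$ is itself a limit of points with $t>0$, and a limit of limits is a limit, because $\Grass(\R^I)$ is metrizable.
\end{itemize}

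\emph{Homeomorphism onto the image.} Define on the image the topology pulled back from $U\times\Grass(\R^I)\times\R_+$. For two generating families, the transition maps $T_{p,t}$ of \Cref{lemme transitions}, combined with \Cref{corollaire independance base adaptee} and its sequential argument, show that the pulled-back topologies agree on overlaps. Since everything is first countable (metrizable), sequential arguments suffice. Hence the coarsest topology restricted to $\beta^{-1}(U\times\R_+)$ is exactly the one making $\iota^{\mathbb X}$ an embedding. On $M\times\R_+^*$ it also agrees with the usual topology, since $\iota$ there is the graph of a continuous map.

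\emph{Conclusion.} Local compactness follows because each chart is a closed subset of a locally compact space, namely $\Grass$ is compact. For Hausdorffness, two points with distinct base points are separated by $\beta$. Two points over the same $p$ lie in one chart $\beta^{-1}(U\times\R_+)$, which is Hausdorff. Finally, $\beta$ is the composition of $\iota$ with the projection, hence continuous. It is surjective: over $t>0$ trivially, and over $(p,0)$ by compactness of $\Grass(\R^I)$, since $\Ker\natural_{p,1/n}$ has a convergent subsequence whose limit lies in the image, so $\blowup{M}{\module F}_p\neq\emptyset$.
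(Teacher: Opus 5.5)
Your argument is correct, but it distributes the work differently from the paper. The paper calls closedness of the image (and hence non-emptiness of $\blowup{M}{\module F}_p$) ``easily seen'' and spends its whole proof on chart compatibility. It concatenates $\mathbb X$ and $\tilde{\mathbb X}$ into one family $\mathbb Y$ and extends the transition family to $T_{p,t}\colon \R^I\oplus\R^J\to\R^I$ by the identity on $\R^I$, so that it becomes onto. Then $\phi(p,L,t)=(p,T_{p,t}^{-1}(L),t)$ is a topological embedding, and $\phi\circ\iota^{\mathbb X}=\iota^{\mathbb Y}=\tilde\phi\circ\iota^{\tilde{\mathbb X}}$. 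This handles all points at once and needs no sequences. It is also the device that later makes smoothness independent of the generating family.

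Your sequential route relies on \Cref{corollaire independance base adaptee}, which only concerns sequences with $t_n>0$. For sequences $(p_n,L_n,0)\to(p,L,0)$ you should make one of two arguments explicit:
\begin{itemize}
\item the diagonal argument: approximate each $(p_n,L_n,0)$ by a point with $t>0$ that is $1/n$-close in both chart metrics, which the corollary permits, and then apply the corollary once more to the diagonal sequence;
\item the observation that, for the original non-surjective $T_{p,t}\colon\R^J\to\R^I$ and $\pi_K\colon\R^I\to\R^I/K$, the map $K\mapsto T_{p,t}^{-1}(K)=\Ker(\pi_K\circ T_{p,t})$ is continuous on the image of $\iota^{\mathbb X}$, because $\pi_K\circ T_{p,t}$ has locally constant rank $\dim\bundle E_p$ there.
\end{itemize}

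In exchange, you actually prove the inclusion $\Ker\natural_{p,0}\subseteq\lim\Ker\natural_{p_n,t_n}$. This is exactly what closedness of the image and surjectivity of $\beta$ require, and the paper leaves it implicit. Your sketch works, and needs no appeal to \Cref{lemme transitions}. Write $\sum_{k(i)=l}v_iX_i=\sum_{k(j)<l}a^l_jX_j+\sum_{k(j)\le l}b^l_jX_j$ near $p$, with $b^l_j(p)=0$. Then the vector
\[
w(q,t)=v-\sum_l\bigl(\sum_{k(j)<l}t^{l-k(j)}a^l_j(q)e_j+\sum_{k(j)\le l}t^{l-k(j)}b^l_j(q)e_j\bigr)
\]
lies in $\Ker\natural_{q,t}$ for $t>0$ and equals $v$ at $(p,0)$.
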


\begin{proof}
The only thing to prove is that the maps $\iota$ are topological embeddings. Indeed it will directly follow that $\deformation{M}{\module F}$ is locally compact and that $\beta$ is a continuous surjection, since these properties are local. Moreover the Hausdorff property is straightforward and the image of $\iota$ is easily seen to be $\overline{\iota(M \times \R_+^*)}$, hence it is closed in $U \times \Grass(\R^I)\times \R_+$.

Fix $\mathbb X = (U, (X_i)_{i \in I}, k)$ a generating family. We want to show that $\iota^{\mathbb X}(U')$ is open in $\iota^{\mathbb X}(U)$ for any open set $U' \subseteq U$; it is enough to show it for $U' =  (\iota^{\tilde{\mathbb X}})^{-1}(V)$, where $\tilde{\mathbb X} = (\tilde U, (\tilde X_j)_{j \in J}, \tilde k)$ is another generating family with $\tilde U \subseteq U$ and $V \subseteq U \times \Grass(\R^J)\times \R_+ $ is open. We will show it by building a commutative diagram
$$
\begin{tikzcd}
& U \times  \Grass(\R^I) \times \R_+ \ar[rd, "\phi"] & \\
\beta^{-1}(\tilde U \times \R_+) \ar[ru, "\iota^{\mathbb X}"] \ar[rd, " \iota^{ \tilde{\mathbb X}}"] & & U \times  \Grass(\R^I \oplus \R^J ) \times \R_+ \\
& \tilde U \times \Grass(\R^J) \times \R_+ \ar[ru, "\tilde \phi"] & 
\end{tikzcd}
$$
such that $\phi$ and $\tilde \phi$ are topological embeddings. Denoting $\phi_0$ and $\tilde \phi_0$ their restrictions to the images of $\iota^{\mathbb X}$ and $ \iota^{\tilde{\mathbb X}}$, we will thus have $\iota^{\mathbb X}( (\iota^{\tilde{\mathbb X}})^{-1}(V)) = \phi^{-1}_0(\tilde \phi_0(V))$ which is open in $\iota^{\mathbb X}(U)$ as soon as $\phi$ and $\tilde \phi$ are topological embeddings. Consider a transition family $T_{p,t}:\R^J \rightarrow \R^I$ (see \Cref{lemme transitions}) and extend $T_{p,t} : \R^I \oplus \R^J \rightarrow \R^I$ by linearity by setting $T_{p,t} = \id$ on $\R^I$. Reversing $\mathbb X$ and $\tilde{\mathbb X}$ one builds the same way a family $\tilde T_{p,t}: \R^I \oplus \R^J \rightarrow \R^J$. The maps $\phi(p, L, t) = (p,T_{p,t}^{-1}(L), t)$ and $\tilde \phi(p,L,t) = (p,\tilde T_{p,t}^{-1}(L), t)$ then make the diagram commute.
\end{proof}

\begin{exemple}[continuation of \ref{exemple Baouendi classique}]\label{exemple Baouendi blup}
Let $\module F$ be the Baouendi-Grushin filtration (see \Cref{exemple Baouendi classique}); let us compute $\blowup{M}{\module F}_p$ for $p = (x_0, y_0) \in M = \R^2$.

\begin{itemize}
\item If $y_0 \neq 0$, take $\mathbb{X} =(\R \times \R^*, (\partial_y, y^{N-1} \partial_x), k )$ the generating family of \Cref{base adaptee en y non nul}. Then for any $q \in \R \times \R^*$ one computes $\Ker(\natural_{q,t}^{\mathbb X}) = \{0\}$, hence 
\begin{equation}
\blowup{M}{\module F}_p = \{\{0\}\}.
\end{equation}

\item If $y_0 = 0$, take $\mathbb{Y} = (M, (\partial_y, y^{N-1}\partial_x, \dots, y\partial_x, \partial_x), k)$ the generating family of \Cref{base adaptee en y nul} and define
\begin{align}
L_{[a,b]}^p &\coloneqq \Ker\left(\sum_{i=1}^N a^{N-i}b^{i-1} [y^{N-i}\partial_x]_{p,i}^* \right) \cap \Ker \left( [\partial_y]_{p,1}^*\right) \in \Grass(\gr(\module F)_p),\\
\label{def L theta} L_{[a,b]} &\coloneqq (\natural_{p,0}^{\mathbb Y})^{-1}(L_{[a,b]}^p) \\
\notag&= \Ker\left(\sum_{i=1}^N a^{N-i}b^{i-1} e_i^* \right) \cap \Ker \left( e_0^*\right) \in \Grass(\R^{N+1})
\end{align}
with $(e_i^*)_{0 \leq i \leq N}$ (resp $([\partial_y]_{p,1}^*, [y^{N-1}\partial_x]_{p,1}^*, \dots, [\partial_x]_{p,N}^* )$) denoting the dual basis of $(\R^{N+1})^*$ (resp $\gr(\module F)_p^*$) and $[a,b] \in (\R^2 \setminus \{(0,0)\})/\R^* = \mathbb{P}^1(\R)$. By construction, $\Ker(\natural_{(x,y),t}^{\mathbb Y}) = L_{[y, t]}$ for $t>0$

The map $\theta \mapsto L_\theta$ is easily seen to be a topological (and smooth) embedding of $\mathbb{P}^1(\R)$ into $\Grass(\R^{N+1})$; in particular $\{L_\theta \; |\; \theta \in \mathbb{P}^1(\R) \}$ is closed. Therefore, a sequence $\Ker(\natural_{p_n,t_n}^{\mathbb Y})$ converges in $\Grass(\R^{N+1})$ (to a certain $L_\theta$) if and only if $[y_n, t_n]$ converges in $\mathbb{P}^1(\R)$ (to $\theta$). It follows that 
\begin{equation}\label{description blup Baouendi}
\blowup{M}{\module F}_p = \{L_\theta^p \; |\; \theta \in \mathbb{P}^1(\R) \}
\end{equation}
\end{itemize}

Moreover, the map $\iota^{\mathbb{Y}}$ is the composition of the above embedding $\mathbb{P}^1(\R) \hookrightarrow \Grass(\R^{N+1})$ and of the following map:
\begin{equation}\label{eq plongement Baouendi projectif}
\begin{aligned}
i^{\mathbb{Y}} : \deformation{M}{\module F} & \hookrightarrow M \times \mathbb{P}^1(\R) \times \R_+ &&  \\
((x,y),t) & \mapsto ((x,y), [y,t], t) && \text{if} \; t \neq 0 \\
((x,y), \{0\}, 0) & \mapsto ((x,y), [1,0], 0) & &\text{if} \; y \neq 0 \\
((x,0), L_\theta^{(x,0)}, 0) & \mapsto ((x,0), \theta, 0).&&
\end{aligned}
\end{equation}
The map $i^{\mathbb{Y}}$ is then a topological embedding by \Cref{topologie independante de la base adaptee}; it gives a global description of the topology of $\deformation{M}{\module F}$.

Finally, for $p \in \R \times \{0\}$ and $L^p_{[a,b]} \in \blowup{M}{\module F}_p$, one computes from \eqref{action de R sur la localisation} that $\alpha_\lambda(L_{[a,b]}^p) = (L_{[a, \lambda^{-1}b]}^p) \in \blowup{M}{\module F}_p$. We will prove below (see \Cref{action alpha bien def}) that the set $\blowup{M}{\module F}_p \subseteq \Grass(\gr(\module F)_p)$ is actually always closed under the action $\alpha$. In this case, this action has three distincts orbits in $\blowup{M}{\module F}_p$ which are $\{L_{[1,0]}^p\}$, $\{L_{[0,1]}^p\}$ and $\{L_{[a,1]}^p \; |\; a \neq 0\}$.
\end{exemple}

\begin{remarque}
In \cite{Mohsen22} the author asked if the blowdown map $\beta : \deformation{M}{\module F} \rightarrow M \times \R_+$ was open in general. The answer is no and \Cref{exemple Baouendi blup} provides an easy counterexample by considering $\mathcal{U} = M \times \{[a,b]\; |\; a \neq 0 \} \times \R_+ \subset M \times \mathbb{P}^1(\R) \times \R_+$. Indeed $(i^{\mathbb{Y}})^{-1}(\mathcal{U})$ is open, however \eqref{eq plongement Baouendi projectif} shows that $\beta((i^{\mathbb{Y}})^{-1}(\mathcal{U})) = (\R \times \R^* )\times \R_+^* \sqcup M \times \{0\}$, which is not open in $M \times \R_+$.
\end{remarque}

\subsection{Smooth structure}
\emph{\textbf{In general there is no canonical structure of smooth manifold on $\deformation{M}{\module F}$.}} The natural thing to do would be to use the maps $\iota$ to locally embed $\deformation{M}{\module F}$ into a bigger manifold, however, in general, there is no reason for $\iota(\beta^{-1}(U \times \R_+))$ to be a submanifold of $U  \times \Grass(\R^I)\times \R_+$, see \Cref{exemple Baouendi blup}. Nevertheless, we can still define a well-behaved class of "smooth functions" as follows.

\begin{definition}\label{def fonctions lisses unites}
Let $f$ be a continuous (complex valued) function on $\deformation{M}{\module F}$. We say that $f$ is \textbf{smooth} if for any generating family $(U, (X_i)_{i \in I}, k)$:
\begin{equation}\label{existence fonction lisse restriction}
\exists \tilde f \in \ci(U  \times \Grass(\R^I)\times \R_+);\; f_{|\beta^{-1}(U \times \R_+)} = \tilde f_{|\iota(\beta^{-1}(U \times \R_+))} \circ \iota.
\end{equation}
We denote by $\ci(\deformation{M}{\module F})$ the set of (complex valued) smooth functions. 

More generally, for $N$ a smooth manifold, a continuous function $f$ on $\deformation{M}{\module F} \times N$ is said to be smooth if, replacing $U  \times \Grass(\R^I)\times \R_+$ by $U  \times \Grass(\R^I)\times \R_+ \times N$, \eqref{existence fonction lisse restriction} holds for all generating families. The set of such functions will be denoted $\ci(\deformation{M}{\module F} \times N)$. 

Finally, a continuous map from $\deformation{M}{\module F}$ to $N$ (or from $\deformation{M}{\module F}$ to itself) is called smooth if it pulls back smooth functions to smooth functions.
\end{definition}

Using transition families (see \Cref{lemme transitions}) one shows easily the following.

\begin{lemme}
Let $f$ be a continuous (complex valued) function on $\deformation{M}{\module F}$. If, for every point $p \in M$, there is a generating family $(U, (X_i)_{i \in I}, k)$ such that $U$ contains $p$ and \eqref{existence fonction lisse restriction} holds, then $f \in \ci(\deformation{M}{\module F})$. 

This property extends in the obvious way to smooth functions on $\deformation{M}{\module F} \times N$, for any smooth manifold $N$.
\end{lemme}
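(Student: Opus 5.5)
The strategy is to move the local smooth extension, given only for one generating family $\mathbb X_p$ near each point $p$, to an arbitrary generating family $\tilde{\mathbb X} = (\tilde U, (\tilde X_j)_{j \in J}, \tilde k)$, and then to glue the pieces with a partition of unity on $\tilde U$. Fix such an $\tilde{\mathbb X}$. For each $p \in \tilde U$ choose, by hypothesis, a generating family $\mathbb X = (U, (X_i)_{i\in I}, k)$ with $p \in U$ and a smooth $\tilde f_{\mathbb X}$ on $U \times \Grass(\R^I) \times \R_+$ realising \eqref{existence fonction lisse restriction}. Shrinking $U$ does no harm, since restricting a generating family to a smaller open set still gives a generating family. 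So we may assume $U \subseteq \tilde U$, and the goal becomes a smooth function on $U \times \Grass(\R^J)\times \R_+$ that agrees with $f \circ (\iota^{\tilde{\mathbb X}})^{-1}$ on $\iota^{\tilde{\mathbb X}}(\beta^{-1}(U\times\R_+))$.

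To transport the extension, apply \Cref{lemme transitions} with the roles arranged so that we get a transition family $S_{p,t} : \R^I \to \R^J$, smooth in $(p,t) \in U \times \R_+$, with $\natural^{\mathbb X}_{p,t} = \natural^{\tilde{\mathbb X}}_{p,t} \circ S_{p,t}$. This identity gives $\Ker(\natural^{\mathbb X}_{p,t}) = S_{p,t}^{-1}(\Ker \natural^{\tilde{\mathbb X}}_{p,t})$ for $t>0$, and $(\natural^{\mathbb X}_{p,0})^{-1}(L) = S_{p,0}^{-1}((\natural^{\tilde{\mathbb X}}_{p,0})^{-1}(L))$ at $t=0$. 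Hence $\iota^{\mathbb X} = \Psi \circ \iota^{\tilde{\mathbb X}}$ on $\beta^{-1}(U\times\R_+)$, where $\Psi(p,W,t) = (p, S_{p,t}^{-1}(W), t)$.

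The main obstacle is that $W \mapsto S_{p,t}^{-1}(W)$ is not smooth, or even defined dimensionally, on all of $\Grass(\R^J)$. It is smooth on the open set of $W$ that are transverse to $\operatorname{Im}(S_{p,t})$, i.e.\ $W + \operatorname{Im}(S_{p,t}) = \R^J$. On the image of $\iota^{\tilde{\mathbb X}}$ this transversality holds, because the map $\natural^{\tilde{\mathbb X}}_{p,t} \circ S_{p,t} = \natural^{\mathbb X}_{p,t}$ has the same image as $\natural^{\tilde{\mathbb X}}_{p,t}$. Indeed, both images equal $\bundle E_p$ for $t>0$, and both equal $\gr(\module F)_p$ for $t = 0$ by \Cref{Remarque generateurs locaux semi cont}. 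Transversality is an open condition on the total space, since it amounts to maximal rank of a smoothly varying map $\R^I \oplus W \to \R^J$. So $\Psi$ is smooth on an open neighbourhood $\Omega$ of $\iota^{\tilde{\mathbb X}}(\beta^{-1}(U\times \R_+))$, which gives a smooth extension $\tilde f_{\mathbb X}\circ\Psi$ on $\Omega$.

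To turn this into a function on all of $U \times \Grass(\R^J)\times\R_+$, note that the image is closed by \Cref{topologie independante de la base adaptee}. Multiply by a smooth cutoff equal to $1$ on the image and supported in $\Omega$; such a cutoff exists since the image and the complement of $\Omega$ are disjoint closed sets. This yields the required local extension on $U \times \Grass(\R^J)\times\R_+$.

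Finally, cover $\tilde U$ by such sets $U_\alpha$ with extensions $g_\alpha$, and take a partition of unity $(\chi_\alpha)$ on $\tilde U$ subordinate to this cover. The function $\sum_\alpha (\chi_\alpha\circ \mathrm{pr}_1)\, g_\alpha$ is smooth on $\tilde U\times\Grass(\R^J)\times\R_+$ and restricts to $f$ via $\iota^{\tilde{\mathbb X}}$. This proves \eqref{existence fonction lisse restriction} for every $\tilde{\mathbb X}$, so $f \in \ci(\deformation{M}{\module F})$. The version with an extra manifold factor $N$ runs identically, with $N$ carried along as a passive parameter in $\Psi$, the cutoff and the partition of unity.
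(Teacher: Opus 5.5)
Your proof is correct and follows the approach the paper intends: the paper only remarks that the lemma follows from transition families (\Cref{lemme transitions}), and you carry out exactly that transport of the local extensions, then glue them with a partition of unity on $\tilde U$. Your transversality observation, which makes $W \mapsto S_{p,t}^{-1}(W)$ smooth on an open neighbourhood of the image of $\iota^{\tilde{\mathbb X}}$, combined with the cutoff, correctly handles the domain issue. The paper avoids the analogous issue in the proof of \Cref{topologie independante de la base adaptee} differently: it extends the transition maps by the identity on an extra summand so that they become surjective.
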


\begin{definition}\label{def structure lisse fibre}
A \textbf{smooth vector bundle} over $\deformation{M}{\module F}$ is a topological vector bundle $\bundle F \rightarrow \deformation{M}{\module F}$, endowed with an atlas such that all transition maps are given by (matrix-valued) smooth functions; such an atlas is called smooth. Given a smooth structure on $\bundle F$, we denote by $\ci(\deformation{M}{\module F}, \bundle F)$ the set of continuous sections of $\bundle F$ that restrict to (vector valued) smooth functions in every chart of the smooth atlas.

We also extend this definition in the obvious way to vector bundles over $\deformation{M}{\module F} \times N$, for any smooth manifold $N$. Morphisms of smooth vector bundles are also defined in the obvious way in smooth atlas.
\end{definition}

\subsection{Debord-Skandalis action}
\begin{definition}\label{def action debord skandalis}
Recall that, for all $p \in M$, there is a canonical action of $\R_+^*$ on $\gr(\module F)_p$ denoted by $\alpha$ and defined by \eqref{action de R sur la localisation}. We will still denote by $\alpha$ the action $\R_+^* \curvearrowright \deformation{M}{\module F}$ defined by 
\begin{equation}\label{action Debord Skandalis}
\begin{aligned}
\alpha_\lambda(p,t) &= (p, \lambda^{-1} t) && \text{for} \; (p, t) \in M \times \R_+^* \\
\alpha_\lambda(p, L, 0) &= (p, \alpha_\lambda(L), 0) && \text{for} \; (p, L) \in \blowup{M}{\module F}
\end{aligned}
\end{equation}
for all $\lambda \in \R_+^*$. The action $\alpha$ is called the \textbf{Debord-Skandalis action}.
\end{definition}

\begin{lemme}\label{action alpha bien def}
The action $\alpha$ is well defined, ie $\alpha_\lambda(L)$ belongs to $\blowup{M}{\module F}_p$ for all $(p,L) \in \blowup{M}{\module F}$ and all $\lambda > 0$, and is continuous. Furthermore, the action $\alpha$ is smooth in the sense that, for all $f \in \ci(\deformation{M}{\module F})$, the map $(a, \lambda) \mapsto f(\alpha_\lambda(a))$ belongs to $\ci(\deformation{M}{\module F} \times \R_+^*)$.
\end{lemme}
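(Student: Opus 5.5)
Fix a generating family $\mathbb X = (U,(X_i)_{i\in I},k)$ and let $\delta_\lambda$ be the diagonal linear map of $\R^I$ given by $\delta_\lambda(e_i) = \lambda^{k(i)} e_i$. The whole proof rests on the intertwining relations
\begin{equation*}
\natural^{\mathbb X}_{p,\lambda^{-1}t}\circ \delta_\lambda = \natural^{\mathbb X}_{p,t} \quad (t>0), \qquad \natural^{\mathbb X}_{p,0}\circ\delta_\lambda = \alpha_\lambda\circ \natural^{\mathbb X}_{p,0},
\end{equation*}
both of which follow immediately from the definitions \eqref{def bekar t non nul}, \eqref{def bekar t nul} and \eqref{action de R sur la localisation}. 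Taking kernels and preimages gives $\Ker(\natural_{p,\lambda^{-1}t}) = \delta_\lambda(\Ker \natural_{p,t})$ and $(\natural_{p,0})^{-1}(\alpha_\lambda L) = \delta_\lambda\big((\natural_{p,0})^{-1}(L)\big)$.

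\textbf{Well-definedness.} Let $L\in \blowup{M}{\module F}_p$, and choose a sequence $(p_n,t_n)\to(p,0)$ with $\Ker(\natural_{p_n,t_n})\to(\natural_{p,0})^{-1}(L)$. Then $(p_n,\lambda^{-1}t_n)\to(p,0)$ as well. Since $\delta_\lambda$ induces a diffeomorphism of $\Grass(\R^I)$, we get
\begin{equation*}
\Ker(\natural_{p_n,\lambda^{-1}t_n}) = \delta_\lambda \Ker(\natural_{p_n,t_n}) \to \delta_\lambda (\natural_{p,0})^{-1}(L) = (\natural_{p,0})^{-1}(\alpha_\lambda L).
\end{equation*}
Hence $\alpha_\lambda L\in\blowup{M}{\module F}_p$.

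\textbf{Continuity and smoothness.} Consider the smooth map
\begin{equation*}
\Phi : U\times\Grass(\R^I)\times\R_+\times\R_+^* \to U\times\Grass(\R^I)\times\R_+, \qquad (p,V,t,\lambda)\mapsto (p,\delta_\lambda V,\lambda^{-1}t).
\end{equation*}
By the relations above, $\iota(\alpha_\lambda(a)) = \Phi(\iota(a),\lambda)$ for all $a\in\beta^{-1}(U\times\R_+)$. Moreover $\beta^{-1}(U\times\R_+)$ is invariant under $\alpha$, because $\alpha$ preserves the point $p$. Since $\iota$ is a topological embedding (\Cref{topologie independante de la base adaptee}), the map $(a,\lambda)\mapsto\alpha_\lambda(a)$ is continuous on $\beta^{-1}(U\times\R_+)\times\R_+^*$. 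These open sets cover $\deformation{M}{\module F}\times\R_+^*$, so the action is continuous.

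For smoothness, take $f\in\ci(\deformation{M}{\module F})$ and let $\tilde f$ be as in \eqref{existence fonction lisse restriction} for $\mathbb X$. Then $\tilde f\circ\Phi \in \ci(U\times\Grass(\R^I)\times\R_+\times\R_+^*)$, and it restricts along $\iota\times\id$ to $(a,\lambda)\mapsto f(\alpha_\lambda a)$. This is exactly the condition in \Cref{def fonctions lisses unites} for $N=\R_+^*$, and it holds for every generating family, so $(a,\lambda)\mapsto f(\alpha_\lambda a)$ belongs to $\ci(\deformation{M}{\module F}\times\R_+^*)$.

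\textbf{Main obstacle.} The only real point is checking the second intertwining relation at $t=0$, namely that $\natural_{p,0}$ is compatible with the grading, i.e.\ $[X_i]_{p,k(i)}$ sits in degree $k(i)$. The rest is bookkeeping; the one thing to confirm is that $\delta_\lambda$ acting on $\Grass(\R^I)$ depends smoothly on $\lambda$, which is clear because it is a linear group action.
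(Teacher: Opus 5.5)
Your proof is correct and follows essentially the paper's argument. Both use the diagonal lift $\tilde\alpha_\lambda(e_i)=\lambda^{k(i)}e_i$ to get a smooth action on $U\times\Grass(\R^I)\times\R_+$ intertwined with $\alpha$ by $\iota$; continuity then comes from $\iota$ being an embedding, and smoothness from pulling back $\tilde f$. The only difference is cosmetic: you prove stability of $\blowup{M}{\module F}_p$ by pushing sequences through $\delta_\lambda$ and state the $t=0$ relation $\natural_{p,0}\circ\delta_\lambda=\alpha_\lambda\circ\natural_{p,0}$ explicitly, while the paper phrases this as density of $\iota(U\times\R_+^*)$ in the closed image of $\iota$.
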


\begin{proof}
Let $(U, (X_i)_{i \in I}, k)$ be a generating family and define $\tilde \alpha $ the linear action of $\R_+^*$ on $\R^I$ given by $\tilde \alpha_{\lambda}(e_i) \coloneqq \lambda^{k(i)} e_i$. We still denote $\tilde \alpha$ the (continuous) action of $\R_+^*$ on $M \times \Grass(\R^I) \times \R_+$ given by $\tilde \alpha_\lambda(p,L,t) = (p, \tilde \alpha_\lambda(L), \lambda^{-1}t)$. Notice that $\iota(\alpha_\lambda(p,t)) = \tilde \alpha_\lambda (\iota(p,t))$ for all $(p,t) \in U \times \R_+^*$. Since $\iota(U \times \R_+^*)$ is a dense subset of the image of $\iota$, the image of $\iota$ is stable under the action $\tilde \alpha$. The continuity of $\alpha$ then follows from the fact that $\iota$ is a local topological embedding by \Cref{topologie independante de la base adaptee}, and the smoothness of $\alpha$ comes from the smoothness of $\tilde \alpha$.
\end{proof}

\section{Deformation of the algebroid}\label{section deformation algebroide}
\subsection{Deformation of the bundle}

\begin{proposition}\label{def deformation du fibré}
Set
\begin{align}
\Lie{osc}_{\module F}(\bundle E) &= \bigsqcup_{(p,L) \in \blowup{M}{\module F}} \gr(\module F)_p / L,\\
\deformation{\bundle E}{\module F} &= \bundle E \times \R_+^* \sqcup \Lie{osc}_{\module F}(\bundle E)\times \{0\}
\end{align}
We call $\Lie{osc}_{\module F}(\bundle E)$ the \textbf{osculating bundle} and $\deformation{\bundle E}{\module F}$ the \textbf{deformation bundle} associated to $\module F$. The elements of $\Lie{osc}_{\module F}(\bundle E)$ are thus pairs of the form $(p,u \mod L)$ with $(p,L) \in \blowup{M}{\module F}$ and $u \in \gr(\module F)_p$.

Let $\pi: \deformation{\bundle E}{\module F} \rightarrow \deformation{M}{\module F}$ be defined by $\pi(p,X, t) = (p,t)$ if $t \neq 0$ and $\pi(p,u \mod L, 0) = (p,L, 0)$. There is a canonical structure of smooth vector bundle on $(\deformation{\bundle E}{\module F}, \pi)$, in the sense of \Cref{def structure lisse fibre}, such that the following holds.
\begin{enuminthm}
\item\label{structure fibre loin de zero} The identity map between $\deformation{\bundle E}{\module F}_{|M \times \R_+^*}$ and $\bundle E \times \R_+^*$ is a diffeomorphism.
\item For any $k \in \N$ and any $X \in \module F^k$, the section of $\deformation{\bundle E}{\module F}$ given by 
\begin{equation}\label{eq theta champs de vecteurs}
\begin{aligned}
\theta_k(X)(p,t) & = t^k X(p) && \text{if} \; t \neq 0\\
\theta_k(X)(p,L,0) & = (p, [X]_{p, k} \mod L)&&  \text{else}
\end{aligned}
\end{equation}
is smooth.
\end{enuminthm}
The sections $\theta_k(X)$ with $X \in \module F^k$ thus locally generate $\ci(\deformation{M}{\module F}, \deformation{\bundle E}{\module F})$ over $\ci(\deformation{M}{\module F}, \R)$, since they generate $\deformation{\bundle E}{\module F	}$ fiberwise. 

Finally, denote
\begin{equation}
\begin{tikzcd}
\deformation{\bundle E}{\module F} \ar[r, "\beta_{\bundle E}"] \ar[d] & \bundle E \times \R_+ \ar[d]\\
\deformation{M}{\module F} \ar[r, "\beta"] & M \times \R_+
\end{tikzcd}
\end{equation}
the map given, fiberwise, by $(\beta_{\bundle E})_{|\bundle E\times \R_+^*} = \id$ and $(\beta_{\bundle E})_{|\Lie{osc}_{\module F}(\bundle E)} \equiv 0$. Then $\beta_{\bundle E}$ is a morphism of smooth vector bundles and, for any $X \in \module F^k$, the section $t^k X \in \ci(M \times \R_+, \bundle E \times \R_+)$ lifts to $\theta_k(X)$ through $\beta_{\bundle E}$.
\end{proposition}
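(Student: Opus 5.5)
The plan is to build local trivialisations of $\deformation{\bundle E}{\module F}$ from generating families, and to show that the resulting transition maps are smooth in the sense of \Cref{def fonctions lisses unites}. Fix a generating family $\mathbb X = (U,(X_i)_{i\in I},k)$. Over $\beta^{-1}(U\times\R_+)$, every fiber of $\deformation{\bundle E}{\module F}$ is a quotient of $\R^I$:
\begin{itemize}
\item at $(p,t)$ with $t>0$, the map $\natural^{\mathbb X}_{p,t}$ induces an isomorphism $\R^I/\Ker(\natural^{\mathbb X}_{p,t})\cong \bundle E_p$; it is onto because $\bundle E_p$ is spanned by the $X_i(p)$, since $\module F^N=\ci(M,\bundle E)$;
\item at $(p,L,0)$, the surjection $\natural^{\mathbb X}_{p,0}$ (see \Cref{Remarque generateurs locaux semi cont}) induces $\R^I/(\natural^{\mathbb X}_{p,0})^{-1}(L)\cong \gr(\module F)_p/L$.
\end{itemize}
Thus, writing $K_a$ for the Grassmannian component of $\iota^{\mathbb X}(a)$, we have $\deformation{\bundle E}{\module F}_{|\beta^{-1}(U\times\R_+)}$ identified with the pullback by $\iota^{\mathbb X}$ of the tautological quotient bundle $Q=\{(K,v \bmod K)\}$ over $\Grass_{|I|-d}(\R^I)$, where $d=\dim\bundle E_p$ is constant near $p$. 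We transport the topology and fiberwise linear structure through this identification. Local trivialisations come from $Q$: near a point $K_0$, choose a subspace $W\subset\R^I$ complementary to $K_0$. For $K$ near $K_0$ the projection $W\to\R^I/K$ is an isomorphism, depending smoothly on $K$, and this gives a chart $\beta^{-1}(\cdot)\times W\to \deformation{\bundle E}{\module F}$.

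The main step is checking that two such charts have smooth transition maps. There are two cases.

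\emph{Same family, different complements $W,W'$.} The transition is the composite $W\to\R^I/K\to W'$, a smooth function of $K$. Composing with $\iota^{\mathbb X}$ makes it smooth in the sense of \eqref{existence fonction lisse restriction}.

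\emph{Different families $\mathbb X,\tilde{\mathbb X}$.} Take a transition family $T_{p,t}:\R^J\to\R^I$ from \Cref{lemme transitions}. Since $\natural^{\tilde{\mathbb X}}=\natural^{\mathbb X}\circ T$, the map $T_{p,t}$ sends $\tilde K_a$ into $K_a$ and induces exactly the identification of fibers, $\R^J/\tilde K_a\to \R^I/K_a$. In the charts this becomes $\tilde W\to\R^J\xrightarrow{T_{p,t}}\R^I\to\R^I/K_a\cong W$. The last arrow is smooth in $K_a=\iota^{\mathbb X}(a)$, and $T$ is smooth in $(p,t)$, so the matrix is a smooth function of $\iota^{\mathbb X}(a)$. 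This settles smoothness for $\mathbb X$-representatives. For an arbitrary third family, the transition families let us rewrite $\iota^{\mathbb X}$-smooth functions in terms of $\iota^{\mathbb X'}$, using the $\phi,\tilde\phi$ trick from the proof of \Cref{topologie independante de la base adaptee} together with the local criterion lemma after \Cref{def fonctions lisses unites}. This transfer between families is where I expect the most care to be needed. Local triviality as a topological vector bundle follows from the same charts, using \Cref{topologie independante de la base adaptee}.

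Property (i) is immediate: for $t>0$ the chart composed with $\natural_{p,t}$ is a smooth frame of $\bundle E$, built from the sections $t^{k(i)}X_i$ restricted to the projection of $W$. For (ii), take $X\in\module F^k$ and write $X=\sum_i a_iX_i$ on $U$ with $a_i=0$ whenever $k(i)>k$. Then $\theta_k(X)(a)$ is the class of $v(p,t)=\sum_i t^{k-k(i)}a_i(p)e_i$ in $\R^I/K_a$, both for $t>0$ and for $t=0$, where $[X]_{p,k}=\sum_{k(i)=k}a_i(p)[X_i]_{p,k}$. Here $v$ is smooth in $(p,t)$, so in a chart $\theta_k(X)$ is smooth. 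Finally, in these charts $\beta_{\bundle E}$ is $(a,w)\mapsto (p,t,\natural_{p,t}(w))$ for $t>0$ and is $0$ at $t=0$. This equals $(p,t,\sum_i w_it^{k(i)}X_i(p))$, which is smooth and also vanishes at $t=0$ because $k(i)\ge1$. Hence $\beta_{\bundle E}$ is a smooth bundle morphism over $\beta$, and $\beta_{\bundle E}\circ\theta_k(X)=t^kX$ by construction.
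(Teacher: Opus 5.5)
Your proposal is correct and takes essentially the same approach as the paper. The paper also identifies $\deformation{\bundle E}{\module F}_{|\beta^{-1}(U\times\R_+)}$ with the pullback by $\iota$ of the co-tautological bundle $\R^I/L \to \Grass(\R^I)$, obtains compatibility between generating families from a transition family $T_{p,t}$, and proves smoothness of $\theta_l(X)$ through the explicit lift $\sum_i t^{l-k(i)}a_i(p)e_i \bmod L$; your complements $W$ simply spell out the local trivialisations of that bundle, and the cross-family transfer you worried about is exactly what the local criterion lemma after \Cref{def fonctions lisses unites} supplies.
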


\begin{proof}
The condition \ref{structure fibre loin de zero} ensures the uniqueness of the bundle structure, by density of $M \times \R_+^*$ in $\deformation{M}{\module F}$. It thus suffices to build the trivialisations of the bundle in the neighbourhood of points $(p,L_p, 0)\in \deformation{M}{\module F}$, and to check that these trivialisations are compatible with one another and compatible with $\deformation{\bundle E}{\module F}_{|M \times \R_+^*} \simeq \bundle E \times \R_+^*$. 

Let $(U, (X_i)_{i \in I}, k)$ be a generating family with $p \in U$ and denote $\CoTaut \rightarrow \Grass(\R^I)$ be the "co-tautological" bundle over $\Grass(\R^I)$, ie the vector bundle whose fiber over $L$ is $\R^I/L$. We build a bijection $\Phi$ between $\deformation{\bundle E}{\module F}_{|\beta^{-1}(U \times \R_+)}$ and the pullback bundle $\iota^*(U  \times \CoTaut \times \R_+)$ the following way:
\begin{equation}\label{trivialisation deformation fibre}
\begin{aligned}
\Phi: (U  \times \CoTaut \times \R_+)_{|\iota(\beta^{-1}(U \times \R_+))} & \rightarrow \deformation{\bundle E}{\module F}_{|\beta^{-1}(U \times \R_+)}&&\\
(p, \Ker(\natural_{p,t}), v \mod \Ker(\natural_{p,t}) , t) & \mapsto (p, \natural_{p,t}(v),t) && \text{if} \; t \neq 0 \\
 (p,L, v \mod L, 0)&  \mapsto (p, \natural_{p,0}(v) \mod \natural_{p,0}(L), 0) &&\text{else}.
\end{aligned}
\end{equation}
The trivialisation $\Phi$ is clearly compatible with the structure of $\deformation{\bundle E}{\module F}_{|M \times \R_+^*}$ since the map $U \times \R^I \times \R_+^* \rightarrow \bundle E \times \R_+^*$, $(p,v,t) \mapsto (\natural_{p,t}(v), t)$ is smooth and lifts $\Phi$. Moreover, if we are given another generating family $(\tilde U, (\tilde X_j)_{j \in J}, \tilde k)$, giving rise to another trivialisation $\tilde \Phi$, the compatibility between $\Phi$ and $\tilde \Phi$ follows using a transition family as given by \Cref{lemme transitions}.

Finally consider $X \in \module F^l$, written locally $X = \sum_i a_i X_i$ where the $a_i$'s are smooth functions on $U$ such that $a_i = 0$ if $k(i) > l$, and set $Y$ the smooth section of $U \times \CoTaut \times \R_+$ given by $Y(p,L,t) = \sum_i t^{l - k(i)} a_i(p) e_i \mod L$. Then by construction $\Phi^{-1}(\theta_l(X))$ is the restriction of $Y$ to $\iota(\beta^{-1}(U \times \R_+))$, hence $\theta_l(X)$ is smooth. The asserted proporties of $\beta_{\bundle E}$ are then straightforward to check locally, in the smooth atlas given by \eqref{trivialisation deformation fibre}.
\end{proof}

\subsection{Filtration of Lie algebroids}
Recall the following definition of Lie algebroid.

\begin{definition}\label{definition algebroides}
A \textbf{Lie algebroid} over $M$ is a triplet $(\bundle A, [\cdot, \cdot], \rho)$, where $\bundle A \rightarrow M$ is a smooth vector bundle, $[\cdot, \cdot] : \ci(M, \bundle A) \times \ci(M, \bundle A) \rightarrow \ci(M, \bundle A)$ a $\R$-bilinear map and $\rho: \bundle A \rightarrow \bundle T M$ a vector bundle morphism, satisfying the following conditions:
\begin{enuminthm}
\item $[\cdot, \cdot]$ is a Lie bracket ie it is antisymmetric and satisfies Jacobi identity;
\item for any pair of sections $X,Y \in \ci(M, \bundle A)$ the relation 
\begin{equation}\label{rho est un morphisme de lie}
\rho([X,Y]) = [\rho(X), \rho(Y)]
\end{equation}
holds, where $\rho$ still denotes the induced map $\rho: \ci(M, \bundle A) \rightarrow \ci(M, \bundle TM)$;
\item\label{Leibniz algebroides} for any pair of sections $X,Y \in \ci(M, \bundle A)$ and any function $f \in \ci(M, \R)$ one has
\begin{equation}\label{equation Leibniz}
[X, fY] = f[X, Y] + (\rho(X) \cdot f)Y.
\end{equation}
\end{enuminthm}

The map $\rho$ is called the \textbf{anchor} of the algebroid and \eqref{equation Leibniz} will be refered to as Leibniz rule. 
\end{definition}

Let $(\bundle A \rightarrow M, [ \cdot, \cdot ], \rho)$ be a Lie algebroid.

\begin{definition}\label{def filtration algebroide}
By a (singular) filtration of $(\bundle A, [ \cdot, \cdot ], \rho)$, we mean a filtration $\module F = (\module F^k)_{k \in \N}$ of the bundle $\bundle A \rightarrow M$, in the sense of \Cref{def filtration}, satisfying the relation
\begin{equation}\label{condition crochets def filtration}
 [\module F^k, \module F^l] \subseteq \module F^{k+l} \quad \forall k, l \in \N.
\end{equation} 
We sometimes call $\module F$ a \textbf{Lie filtration} of $\bundle A$ when the Lie algebroid structure is implicit.
\end{definition}

\begin{exemple}\label{example generateurs Hormander}
Given $X_1, \dots, X_r \in \ci(M, \bundle A)$ set $\module F^1 = \langle X_1, \dots, X_r \rangle $ and define, by induction, $\module F^{k+1} \coloneqq \langle \module F^k, [\module F^k, \module F^1] \rangle$, where $\langle \cdot \rangle$ denotes "the $\ci(M, \R)$ module generated by $\cdot$". These modules always satisfy \eqref{condition crochets def filtration}, thus they define a Lie filtration if and only there exists a rank $N \geq 1$ such that $\module F^N = \ci(M, \bundle A)$. This condition on the $X_i$'s is a generalisation, to Lie algebroids, of the so-called Hörmander's Lie bracket generating condition.

More generally, let $X_1, \dots, X_r \in \ci(M, \bundle A)$ and $w_1, \dots, w_r \in \N \setminus \{0\}$. For $d \geq 2$ and $I = (i_1, \dots, i_d) \subset \{1, \dots, r\}^d$, denote $X_I \coloneqq [X_{i_1}, [X_{i_2},[ \dots[X_{i_{d-1}}, X_{i_d} ]]\dots ]$ and $w_I = w_{i_1} + \dots + w_{i_d}$. Then set $\module F^k \coloneqq \langle \{ X_I \; |\; I \subset \{1, \dots, r\}^d; w_I \leq k , d \geq 1\} \rangle$. The previous case corresponds to $w_1 = \dots = w_r = 1$. If one assumes that $\module F^N = \ci(M, \bundle A)$ at a certain rank $N$, then $\module F$ defines a Lie filtration; any finitely generated filtration is of this form.
\end{exemple}

Let $\module F$ be a filtration on $(\bundle A, [\cdot, \cdot], \rho)$, $p \in M$ and $k,l \in \N$. By Leibniz rule \eqref{equation Leibniz} the Lie bracket $[\cdot, \cdot]: \module F^k \times \module F^l \rightarrow \module F^{k+l}$ induces a bilinear map 
\begin{equation}
[\cdot, \cdot]: \frac{\module F^k}{\module F^{k-1} + I_p\module F^k} \times \frac{\module F^l}{\module F^{l-1} + I_p\module F^l} \rightarrow \frac{\module F^{k+l}}{\module F^{k+l-1} + I_p\module F^{k+l}}.
\end{equation}

In other words, for $X \in \module F^k$ and $Y \in \module F^l$, the bracket given by $[[X]_{p,k}, [Y]_{p,l}] \coloneqq [[X,Y]]_{p, k+l}$ is well defined. Extending $[\cdot, \cdot]$ to $\gr(\module F)_p$ by bilinearity turns the localisation into a graded nilpotent Lie algebra of degree $N$. Moreover the compatibility between the filtration and the bracket implies the following.

\begin{lemme}\label{Blup stable par crochet}
For all $(p, L) \in \blowup{M}{\module F}$ the space $L$ is a sub Lie algebra of $\gr(\module F)_p$.
\end{lemme}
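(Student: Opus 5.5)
Fix a generating family $\mathbb X = (U, (X_i)_{i \in I}, k)$ with $p \in U$, and a sequence $(p_n, t_n) \to (p,0)$ with $K_n \coloneqq \Ker(\natural_{p_n,t_n}) \to \tilde L \coloneqq \natural_{p,0}^{-1}(L)$ in $\Grass(\R^I)$. The plan is to lift the bracket to a bracket on $\R^I$ depending smoothly on $(q,t)$, and then pass to the limit $n \to \infty$.

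First, write the structure functions. Since $[X_i, X_j] \in \module F^{k(i)+k(j)}$, on $U$ we can choose smooth functions $c_{ij}^l$ with $[X_i,X_j] = \sum_l c_{ij}^l X_l$ and $c_{ij}^l = 0$ whenever $k(l) > k(i)+k(j)$. Define the bilinear maps $B_{q,t} : \R^I \times \R^I \to \R^I$ by
\begin{equation*}
B_{q,t}(e_i, e_j) \coloneqq \sum_l t^{k(i)+k(j)-k(l)} c_{ij}^l(q)\, e_l .
\end{equation*}
These are smooth in $(q,t) \in U \times \R_+$, because only nonnegative powers of $t$ occur. At $t=0$ only the terms with $k(l) = k(i)+k(j)$ survive, so $\natural_{p,0}(B_{p,0}(e_i,e_j)) = [[X_i,X_j]]_{p,k(i)+k(j)} = [\natural_{p,0}e_i, \natural_{p,0}e_j]$. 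In other words, $\natural_{p,0}$ intertwines $B_{p,0}$ with the graded bracket of $\gr(\module F)_p$.

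The key step, for $t>0$, is a replacement for the pointwise intertwining, which fails because the algebroid bracket is not $\ci$-bilinear. For $u,v \in K_n$ one cannot simply say that $[\natural_t(u),\natural_t(v)]$ vanishes at $p_n$. Instead, use the Leibniz rule on $\natural_t(u) = \sum_i u_i t^{k(i)} X_i$ with constant coefficients $u_i$: then $[\natural_t(u), \natural_t(v)] = \natural_t(B_{\cdot,t}(u,v))$ as sections, with no derivative terms. The difficulty is that this section need not vanish at $p_n$ when $u,v$ merely satisfy $\natural_{p_n,t_n}(u)=\natural_{p_n,t_n}(v)=0$. I would handle this with a trick. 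Choose smooth families $u(q), v(q) \in \Ker(\natural_{q,t_n})$ near $p_n$ where possible, which is delicate since the rank may jump. Alternatively, and preferably, use the identity
\begin{equation*}
[\natural_t(u),\natural_t(v)](q) = \natural_{q,t}(B_{q,t}(u,v)),
\end{equation*}
read as a statement about $\rho$-independent quantities. Since $\natural_{p_n,t_n}(u)=0$ and $\natural_{p_n,t_n}(v) = 0$, the bracket $[Y,Z]$ at a common zero $q$ of two sections $Y,Z$ equals $\nabla_Y Z - \nabla_Z Y$ plus a torsion term, which vanishes at $q$ in local trivialization. More precisely, in a local frame $[Y,Z](q) = \rho(Y)(Z)(q) - \rho(Z)(Y)(q) + T(Y,Z)(q)$, and all three terms vanish because $Y(q)=Z(q)=0$ (so $\rho(Y)(q)=0$ as well). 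Hence $\natural_{p_n,t_n}(B_{p_n,t_n}(u,v)) = 0$, i.e. $B_{p_n,t_n}(K_n,K_n) \subseteq K_n$.

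Finally, pass to the limit. The condition $B_{q,t}(K,K) \subseteq K$ is closed in $(q,K,t) \in U \times \Grass_d(\R^I) \times \R_+$, by continuity of $B$ and local frames of the tautological bundle. So $B_{p,0}(\tilde L, \tilde L) \subseteq \tilde L$, and applying $\natural_{p,0}$, which is onto by \Cref{Remarque generateurs locaux semi cont}, together with the intertwining of the first step gives $[L,L] \subseteq L$. The main point to check carefully is the vanishing of the bracket of two sections at a common zero in the frame computation, which is where the Leibniz rule and the anchor enter.
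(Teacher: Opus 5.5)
Your argument is correct and is essentially the paper's proof. Your $B_{q,t}$ is the family $S_{p,t}$ of Lemma~\ref{lemme relevement crochet}, the inclusion $B_{p_n,t_n}(K_n,K_n)\subseteq K_n$ comes from the same fact that the bracket of two sections vanishing at a point still vanishes there, and your closedness-in-the-Grassmannian limit is equivalent to the paper's approximation of the lifts $\tilde u,\tilde v$ by vectors $\tilde u_n,\tilde v_n\in K_n$. You should, however, delete the sentences claiming that $[\natural_t(u),\natural_t(v)]$ need not vanish at $p_n$, together with the abandoned smooth-families trick: that claim is false, because $u,v$ are constant vectors and your own local-frame computation shows the bracket does vanish at the common zero.
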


The proof of \Cref{Blup stable par crochet} relies on the following construction.

\begin{lemme}\label{lemme relevement crochet}
Let $(U, (X_i)_{i \in I}, k)$ be a generating family. There exists a family of bilinear maps $S_{p,t} : \R^I \times \R^I \rightarrow \R^I$, depending smoothly on $(p,t) \in U \times \R_+$, satisfying the relations
\begin{align}
\label{S releve le crochet}\natural_{p,t}(S_{p,t}(u,v)) &= [\natural_t(u), \natural_t(v)](p) && \text{if} \; t \neq 0, \\
\label{S vaut presque le crochet en q zero}\natural_{p,0}(S_{p,0}(u,v)) &= [\natural_{p,0}(u),\natural_{p,0}(v)] && \text{else}
\end{align}
for all $u, v \in \R^I$ and all $(p,t) \in U \times \R_+$. Moreover one can assume that, for any $p \in U$ and $v \in \R^I$, the linear map $u \mapsto S_{p,0}(u,v)$ is nilpotent of degree $N$.
\end{lemme}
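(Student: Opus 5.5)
Since $\natural$ is linear, $S_{p,t}$ is determined by bilinearity from its values on pairs of basis vectors $(e_i, e_j)$, so the plan is to write each bracket $[X_i, X_j]$ in the generating family and mimic the proof of \Cref{lemme transitions}. For $i,j \in I$ we have $X_i \in \module F^{k(i)}$ and $X_j \in \module F^{k(j)}$, hence $[X_i, X_j] \in \module F^{k(i)+k(j)}$ by \eqref{condition crochets def filtration}. Since $(U,(X_i),k)$ is a generating family, we may choose functions $c^m_{i,j} \in \ci(U)$ with
\begin{equation*}
[X_i, X_j]_{|U} = \sum_{m} c^m_{i,j} (X_m)_{|U}, \qquad c^m_{i,j} = 0 \ \text{whenever} \ k(m) > k(i)+k(j).
\end{equation*}
We then define
\begin{equation*}
S_{p,t}(e_i, e_j) \coloneqq \sum_m t^{k(i)+k(j)-k(m)} c^m_{i,j}(p)\, e_m
\end{equation*}
and extend bilinearly. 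All exponents are nonnegative, so $S$ depends smoothly on $(p,t) \in U \times \R_+$.

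For $t>0$, the section $\natural_t(u)$ equals $\sum_i t^{k(i)} u_i X_i$ with constant coefficients. Its bracket with $\natural_t(v)$ is therefore $\sum_{i,j} t^{k(i)+k(j)} u_i v_j [X_i,X_j]$, with no Leibniz terms, and evaluating at $p$ gives exactly $\natural_{p,t}(S_{p,t}(u,v))$. This proves \eqref{S releve le crochet}.

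At $t=0$, only the terms with $k(m) = k(i)+k(j)$ survive, so
\begin{equation*}
\natural_{p,0}(S_{p,0}(e_i,e_j)) = \sum_{k(m)=k(i)+k(j)} c^m_{i,j}(p) [X_m]_{p,k(m)}.
\end{equation*}
We compare this with $[[X_i]_{p,k(i)},[X_j]_{p,k(j)}] = [[X_i,X_j]]_{p,k(i)+k(j)}$. Modulo $\module F^{k(i)+k(j)-1} + I_p\module F^{k(i)+k(j)}$, the terms with $k(m)<k(i)+k(j)$ vanish. Writing $c^m_{i,j} = c^m_{i,j}(p) + (c^m_{i,j}-c^m_{i,j}(p))$, the second piece lies in $I_p \module F^{k(i)+k(j)}$ after localising with a cutoff equal to $1$ near $p$. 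The two sides therefore agree, which gives \eqref{S vaut presque le crochet en q zero}. The main care needed here is the localisation: the $c^m_{i,j}$ are only defined on $U$, so one multiplies by a bump function and uses the definition of $[\cdot]_{p,k}$ for sections that locally belong to $\module F^k$.

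For nilpotency, equip $\R^I$ with the grading by $k$, i.e.\ $\R^I = \bigoplus_{l} \Span\{e_i : k(i)=l\}$. By construction $S_{p,0}(e_i,e_j)$ lies in degree exactly $k(i)+k(j)$. Hence for any $v$, the map $u \mapsto S_{p,0}(u,v)$ sends the part of degree $\geq l$ into the part of degree $\geq l+1$, since each $k(j)\geq 1$. We should first discard any $X_i$ with $k(i)=0$: these lie in $\module F^0 = \{0\}$ and can be removed from the family. Since degrees in $I$ lie in $\{1,\dots,N\}$, any composition of $N$ such maps vanishes, so the map is nilpotent of degree $N$. The only real obstacle is this bookkeeping: making sure the degrees $k(i)$ are at most $N$ and at least $1$, and choosing the coefficients with the degree constraint. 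If some $k(i)>N$, that generator can be assigned degree $N$ instead, or equivalently one notes that the classes vanish beyond degree $N$.
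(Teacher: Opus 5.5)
Your proof is correct and is essentially the paper's argument: the same choice of coefficients vanishing when $k(m)>k(i)+k(j)$, the same formula $S_{p,t}(e_i,e_j)=\sum_m t^{k(i)+k(j)-k(m)}c^m_{i,j}(p)e_m$, and the same strictly-triangular grading argument for nilpotency, with you writing out the $t=0$ verification that the paper leaves implicit. One small correction to your last remark: reassigning a degree $k(i)>N$ to $N$ would change $\natural_{p,t}$, so instead choose the coefficients to also vanish for $k(m)>N$, which is possible since $\{X_m : k(m)\le N\}$ generates $\module F^N$; in any case the paper tacitly assumes $1\le k(i)\le N$.
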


\begin{proof}
Consider a family of smooth functions $(a_{i,j,u})_{(i,j,u) \in I^3}$ with $a_{i,j,u} \in \ci(U)$ such that $[X_i, X_j]_{|U} = \sum_u a_{i,j,u} X_u$ for all $(i,j) \in I^2$ and $a_{i,j,u} = 0$ if $k(u) > k(i) + k(j)$. For $(p,t) \in U \times \R_+$, the bilinear map defined by 
\begin{equation}
S_{p,t}(e_i, e_j) = \sum_{u \in I} t^{k(i) + k(j) - k(u)} a_{i,j,u}(p) e_u 
\end{equation}
satisfies the required conditions. Moreover one has $S_{p, 0}(e_i, v) \in \Span\{ e_u\; |\; k(u) > k(i) \}$ for any $p \in U$, $v \in \R^I$ and $i \in I$. Decomposing $\R^I$ as the direct sum of $\Span\{ e_i \; |\; k(i) = l\}$, $l = 1, \dots, N$, the map $u \mapsto S_{p,0}(u, v)$ is thus strictly lower triangular and hence $N$-nilpotent.
\end{proof}

\begin{proof}[Proof of \Cref{Blup stable par crochet}]
Let $(U, (X_i)_{i \in I}, k)$ be a generating family and $(S_{q,t})_{(q,t) \in U \times \R_+}$ a family of bilinear maps as given by \Cref{lemme relevement crochet}. Consider a sequence $(p_n, t_n)$ such that $(p_n, t_n) \rightarrow (p,0)$ and $\Ker(\natural_{p_n, t_n}) \rightarrow \natural_{p,0}^{-1}(L)$. Take $u,v \in L$, lift them through $\natural_{p,0}$ to $\tilde u, \tilde v \in \R^I$ and consider a sequence $\tilde u_n, \tilde v_n \in \Ker(\natural_{p_n, t_n})$ such that $\tilde u_n \rightarrow \tilde u$ and $\tilde v_n \rightarrow \tilde v$. By continuity of $S_{q,t}$ with respect to $(q,t)$, one has $ S_{p_n,t_n}(\tilde u_n, \tilde v_n) \rightarrow S_{p,0}(\tilde u,\tilde v)$. Moreover, since the Lie bracket of two sections vanishing at a point still vanishes at this point, it follows from \eqref{S releve le crochet} that $\natural_{p_n,t_n}(S_{p_n,t_n}(\tilde u_n, \tilde v_n)) = 0$ for all $n \in \N$, hence $S_{p,0}(\tilde u, \tilde v) \in \lim\Ker(\natural_{p_n, t_n})= \natural_{p,0}^{-1}(L)$ so $[u,v] = \natural_{p,0}(S_{p,0}(\tilde u, \tilde v)) \in L$.
\end{proof}

\begin{definition}
For $p \in M$, let $\Gr(\module F)_p$ be the simply connected Lie group integrating $\gr(\module F)_p$. It means that the exponential map $\exp: \gr(\module F)_p \rightarrow \Gr(\module F)_p$ is a diffeomorphism and that the group structure is given by
\begin{equation}
\forall u,v \in \gr(\module F)_p\quad e^u\cdot e^v = e^{\BCH(u,v)}
\end{equation} where 
\begin{equation}\label{eq BCH}
\BCH(u,v) = u + v - \frac{1}{2}[u,v] + \frac{1}{12} [u,[u,v]] + \dots
\end{equation}
is the Baker-Campbell-Hausdorff formula. Note that the use of right invariant vector fields changes some signs compared to the classical BCH formula. 
\end{definition}

By differentiating the action of $\Gr( \module F)_p$ on itself by conjugation, one gets a canonical action of $\Gr( \module F)_p$ on $\gr(\module F)_p$ denoted
\begin{equation}
gvg^{-1} \coloneqq \left. \frac{d}{dt}\right|_{t=0} ge^{tv}g^{-1}.
\end{equation}
for $g \in \Gr(\module F)_p$ and $v \in \gr(\module F)_p$

\Cref{Blup stable par crochet} shows that any $(p,L) \in \blowup{M}{\module F}$ defines a subgroup $\exp(L) \subset \Gr(\module F)_p$. However these subgroups are not normal in $\Gr(\module F)_p$ in general, which means that we may have $gLg^{-1} \neq L$ for some $g \in \Gr(\module F)_p$, see \Cref{ancre paires}. Nevertheless, the set of subspaces $\blowup{M}{\module F}_p$ is closed under conjugation by any element $g \in \Gr(\module F)_p$, as we will show in \Cref{blup stable par conjugaison}.

\subsection{Deformation of Lie algebroids}\label{section deformation algebroide}

Since a filtration of $(\bundle A, [\cdot,\cdot], \rho)$ is, in particular, a filtration of the bundle $\bundle A \rightarrow M$ in the sense of \Cref{def filtration}, one can extend $\bundle A$ to a "smooth" bundle $\deformation{\bundle A}{\module F} \rightarrow \deformation{M}{\module F}$ by \Cref{def deformation du fibré}. Since the topological space $\deformation{M}{\module F}$ is not a smooth manifold, it does not make sense to talk about a Lie algreboid structure on $\deformation{\bundle A}{\module F}$. Indeed the anchor map should be a bundle morphism with values in the "tangent space" of $\deformation{M}{\module F}$, and this "tangent space" is not well-behaved in general (see \cite[Section 1.1]{Mohsen26}). However, the bundle $\deformation{\bundle A}{\module F}$ still satisfies the following.

\begin{proposition}\label{extension de l'exponentielle}
For  $Z$ a section of $\deformation{\bundle A}{\module F}$, denote $Z_0 = \beta_{\bundle A *}(Z_{|M \times \R_+^*}) \in \ci(M \times \R_+^*, \bundle A \times \R_+^*)$\footnote{The map $\beta$ being an isomorphism when restricted to $M \times \R_+^*$, the pushforward of sections by $\beta_{\bundle A}$ makes sense.}.
\begin{enuminthm}
\item\label{existence flot exponentielle} For $Y$ a section of $\deformation{\bundle A}{\module F}$  with compact support, there is a unique diffeomorphism of $\deformation{M}{\module F}$ (in the sense of \Cref{def fonctions lisses unites}), denoted $ \exp_\rho(Y)$, such that the diagram
\begin{equation}
\begin{tikzcd}[column sep={150,between origins}]
M \times \R_+^* \ar[r, " \exp_\rho(Y)_{|M \times \R_+^*}"] \ar[d, "\beta"]& M \times \R_+^* \ar[d, "\beta"]\\
M \times \R_+^* \ar[r, "\exp(\rho(Y_0))"] & M \times \R_+^*
\end{tikzcd}
\end{equation}
commutes.
\item\label{derivation ancre} The map $\rho(Y): \ci(\deformation{M}{\module F}) \rightarrow \ci(\deformation{M}{\module F})$ given by 
\begin{equation}
\rho(Y)\cdot f\coloneqq \left.\frac{d}{ds}\right|_{s = 0} f \circ \exp_\rho(sY)
\end{equation}
is a well defined derivation. When $Y$ has not compact support anymore, the flow $\exp_\rho(sY)$ is still well defined for small $s$ (depending on the point of $\deformation{M}{\module F}$) hence $\rho(Y)$ is defined for any smooth section $Y$.
\item\label{extension crochet} There is a canonical Lie bracket on $\ci(\deformation{M}{\module F}, \deformation{\bundle A}{\module F})$ such that $[Y,Y']_0 = [Y_0, Y_0']$\footnote{The Lie bracket on $\ci(M \times \R_+^*, \bundle A \times \R_+^*)$ is inherited from the one of $\ci(M,\bundle A)$} for any pair of sections $(Y,Y')$. Moreover
\begin{equation}
\rho([Y,Y']) = \rho(Y) \rho(Y') - \rho(Y') \rho(Y).
\end{equation}
and 
\begin{equation}
[X, fY] = f[X, Y] + (\rho(X) \cdot f)Y
\end{equation}
for any $f \in \ci(\deformation{M}{\module F}, \R)$.
\item\label{blup stable par conjugaison} If $Y$ is of the form $Y = \sum_i (a_i \circ \beta) \theta_{k_i}(Y_i)$, with $a_i \in \cci(M \times \R_+, \R)$ and $(Y_i, k_i)_{i \in I}$ any finite family satisfying $Y_i \in \module F^{k_i}$, then
\begin{equation}\label{flot exponentiel conjugaison}
 \exp_\rho(Y)(p,L,0) = (p, gLg^{-1}) \quad \forall (p,L,0) \in \blowup{M}{\module F} 
\end{equation}
where $g = e^{\sum a_i(p,0) [Y_i]_{k_i}} \in \Gr(\module F)_p$. In particular it means that for any $(p,L) \in \blowup{M}{\module F}$ and any $g  \in \Gr(\module F)_p$ the space $gLg^{-1}$ still belongs to $\blowup{M}{\module F}_p$. 
\end{enuminthm}
\end{proposition}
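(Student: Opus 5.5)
The plan is to work locally in a generating family $\mathbb X = (U, (X_i)_{i\in I}, k)$ and lift everything to the ambient manifold $U \times \Grass(\R^I) \times \R_+$ via $\iota$. There, the objects to construct are flows of honest smooth vector fields tangent to the closed subset $\iota(\beta^{-1}(U\times\R_+))$. By \Cref{def deformation du fibré}, a smooth section $Y$ of $\deformation{\bundle A}{\module F}$ is locally $\sum_i b_i\,\theta_{k(i)}(X_i)$ with $b_i$ smooth on $\deformation{M}{\module F}$. Each $b_i$ extends to a smooth function $\tilde b_i$ on $U \times \Grass(\R^I)\times\R_+$, and it suffices to handle one generator $\theta_{k(i)}(X_i)$ times such a function. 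For $v \in \R^I$ we consider the time-dependent section $\natural_t(v)$ of $\bundle A$, whose anchor $\rho(\natural_t(v))$ is a vector field on $U$.

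We then define the lifted vector field $\tilde V_v$ on $U \times \Grass(\R^I) \times \R_+$ as follows.
\begin{itemize}
\item[•] On the $U$ factor it is $\rho(\natural_t(v))(p)$, which is smooth in $(p,t)$ up to $t=0$ and vanishes at $t=0$ because each $t^{k(i)}$ carries a positive power.
\item[•] On the $\R_+$ factor it is $0$.
\item[•] On the Grassmannian factor it is the infinitesimal action of the linear map $u \mapsto S_{p,t}(v,u)$ from \Cref{lemme relevement crochet}, acting on subspaces $L\subset\R^I$.
\end{itemize}
The key identity to check is that for $t>0$ the flow of $\rho(\natural_t(v))$ transports $\Ker(\natural_{p,t})$ correctly. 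Sections vanishing at $p$ are mapped under the flow $\exp(s\natural_t(v))$ of the groupoid/algebroid, and the derivative of $\Ker(\natural_{\phi_s(p),t})$ is governed by $[\natural_t(v),\natural_t(u)]$, which \eqref{S releve le crochet} lifts to $S_{p,t}(v,u)$. Hence $\tilde V_v$ is tangent to $\iota(U\times\R_+^*)$. Since $\tilde V_v$ is smooth and complete (after cutoff), it preserves the closure, which is the image of $\iota$ by \Cref{topologie independante de la base adaptee}. Transition families (\Cref{lemme transitions}) show that the resulting flows on $\deformation{M}{\module F}$ do not depend on $\mathbb X$. This gives \ref{existence flot exponentielle}; uniqueness follows from the density of $M\times\R_+^*$. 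Smoothness in the sense of \Cref{def fonctions lisses unites} is immediate, since the flow comes from an ambient smooth flow.

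The remaining items follow from this picture.
\begin{itemize}
\item[•] \textbf{Item \ref{derivation ancre}.} Differentiating $f\circ\exp_\rho(sY)$ with $f=\tilde f\circ\iota$ gives $(\tilde V\cdot\tilde f)\circ\iota$, which is smooth and is a derivation.
\item[•] \textbf{Item \ref{extension crochet}.} Define the bracket on generators by $[\theta_k(X),\theta_l(X')]=\theta_{k+l}([X,X'])$, which is smooth by \Cref{def deformation du fibré}. Extend it by the Leibniz rule using the derivation $\rho$. Well-definedness holds because on the dense set $t>0$ it agrees with $[Y_0,Y_0']$, so it is unique by continuity. The relation $\rho([Y,Y'])=[\rho(Y),\rho(Y')]$ and the Leibniz identity also hold on the dense open set $t>0$ and extend by continuity of derivatives of smooth functions.
\item[•] \textbf{Item \ref{blup stable par conjugaison}.} At $t=0$ the $U$-component of $\tilde V$ vanishes, so $p$ is fixed. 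The Grassmannian component is $L\mapsto \ad$-type action of $S_{p,0}(v,\cdot)$, which by \eqref{S vaut presque le crochet en q zero} descends through $\natural_{p,0}$ to $[w,\cdot]$ on $\gr(\module F)_p$ with $w=\sum a_i(p,0)[Y_i]_{k_i}$. Since $a_i(p,0)$ is constant along the flow (the $p$ and $t$ coordinates are fixed), the time-one flow is $\exp(\ad w)$, i.e. conjugation by $g=e^{w}$. This uses the right-invariant sign conventions of \eqref{eq BCH}, for which $\exp(\ad w)=\Ad(e^{w})$ up to a consistent sign.
\end{itemize}

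The main obstacle is the tangency computation for $t>0$: the derivative of the flow of $\rho(\natural_t(v))$ on $\Ker(\natural_{\cdot,t})$ must match the linear action of $S_{p,t}(v,\cdot)$ exactly. This should be done via the equivariance $\frac{d}{ds}\,\natural_t(u)\circ\text{flow} = [\natural_t(v),\natural_t(u)]$, using flows of $v^G$ on $G$, or more simply the Lie derivative of sections of $\bundle A$ along algebroid flows. The subtle point is that the Lie derivative does not act on $\R^I$ itself, only modulo kernels, so one has to verify that the Grassmannian vector field is well defined, which holds precisely on the image of $\iota$.
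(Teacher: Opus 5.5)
\textbf{Verdict.} Your strategy is the paper's, but the step that carries the proof is left as a sketch, and as you wrote it it is false: the Grassmannian component of your field has the wrong sign.

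\textbf{The sign error.} The shared strategy is: write $Y=\sum_i b_i\,\theta_{k(i)}(X_i)$; extend the $b_i$ to $U\times\Grass(\R^I)\times\R_+$; build there a smooth field with $U$-component $\rho(\natural_t(v))$, zero $\R_+$-component and Grassmannian component $X_\varphi$ as in \eqref{champ endomorphisme grassmannienne}, for a linear map $\varphi$ built from $S_{p,t}$; check tangency to $\iota(U\times\R_+^*)$; conclude by closure and density. Now let $p_s=\exp(s\rho(\natural_t(v)))p$ and $u\in\Ker(\natural_{p,t})$. The derivative of $s\mapsto\Ker(\natural_{p_s,t})$ at $s=0$ sends $u$ to a class $\phi(u)$ with $\natural_{p,t}(\phi(u))=[\natural_t(u),\natural_t(v)](p)$, not $[\natural_t(v),\natural_t(u)](p)$. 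So one needs $\varphi=S_{p,t}(\cdot,v)$ rather than your $S_{p,t}(v,\cdot)$. For $t>0$ these differ by a sign modulo $\Ker(\natural_{p,t})$, so your $\tilde V_v$ is not tangent to $\iota(U\times\R_+^*)$.

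\textbf{Counterexample.} Take the Baouendi--Grushin filtration of \Cref{ancre paires} with $N=2$ and the family $\mathbb Y$, where $e_0,e_1,e_2$ correspond to $\partial_y$, $y\partial_x$, $\partial_x$. Take $v=e_0$. Then $\Ker(\natural_{(x,y),t})=\Span(te_1-ye_2)$ and $p_s=(x,y+st)$. The kernel therefore moves with derivative $te_1-ye_2\mapsto -te_2$. Your field gives $S_{p,t}(e_0,te_1-ye_2)=+te_2$ instead, and $e_2\notin\Ker(\natural_{(x,y),t})$ for $t>0$.

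\textbf{How to prove the identity.} It needs an actual computation, which the paper supplies. Choose $J\subset I$ with $(X_j(p))_{j\in J}$ a basis of $\bundle A_p$, and write $\natural_t(u)=\sum_{j\in J}f_j\,t^{k(j)}X_j$ near $p$ with $f_j(p)=0$. Use the explicit curve $u_s=u-\sum_{j\in J}f_j(p_s)e_j\in\Ker(\natural_{p_s,t})$. Differentiating at $s=0$ and applying \eqref{equation Leibniz} gives $\natural_{p,t}(\dot u_0)=-\sum_{j\in J}(\rho(Y_0)\cdot f_j)(p)\,t^{k(j)}X_j(p)=[\natural_t(u),Y_0](p)$, where $\dot u_0$ is the derivative of $u_s$ at $s=0$. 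For non-constant coefficients, i.e.\ $v$ replaced by $\sum_i\tilde b_i(p,L,t)e_i$, you may freeze $v$ at the point. This is legitimate because $\natural_t(u)$ and $\natural_t(v)-Y_0$ both vanish at $p$, hence so does their bracket.

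\textbf{Consequence for item \ref{blup stable par conjugaison}.} The sign matters here too. With $\varphi=S_{p,0}(\cdot,v)$, \eqref{S vaut presque le crochet en q zero} gives $\natural_{p,0}\circ\varphi=\ad(\bar v)\circ\natural_{p,0}$, where $\bar v=\natural_{p,0}(v)$ (your $w$) and $\ad(\bar v)=[\cdot,\bar v]$ is the paper's right-invariant convention. Then \eqref{flot grassmannienne} yields exactly $e^{\bar v}Le^{-\bar v}$. Your $\varphi$ descends to $[\bar v,\cdot]=-\ad(\bar v)$ and would give $e^{-\bar v}Le^{\bar v}$; ``up to a consistent sign'' hides exactly this. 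In the example, flowing along $c\,\theta_1(\partial_y)$ sends $L^p_{[a,b]}$ to $L^p_{[a+bc,b]}$. That is the conjugate by $e^{c[\partial_y]_{p,1}}$, not by its inverse.

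\textbf{Minor point.} There is no well-definedness issue for the Grassmannian component, contrary to your last remark. $X_\varphi$ is a smooth field on all of $\Grass(\R^I)$ for any linear $\varphi$, even when $\varphi$ depends smoothly on $(p,L,t)$. Only tangency needs checking.
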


\begin{remarque}
The condition of \Cref{blup stable par conjugaison} means that $Y$ is a lift, through $\beta_{\bundle A}$, of the vector field $\sum_i a_i t^{k(i)} Y_i \in \cci(M \times \R_+, \bundle T M)$.
\end{remarque}

The proof of \Cref{extension de l'exponentielle} relies on general facts about the manifold structure of the Grassmannian space. Consider a vector space $E$, fix an integer $d \leq \dim(E)$ and denote by $\Grass(E)$ the Grassmannian manifold of subspaces of dimension $d$. Then for $L \in \Grass(E)$ there is a canonical identification 
\begin{equation}\label{tangent grassmannienne}
\bundle T_L \Grass(E) \simeq \mathcal{L}(L, E/L)
\end{equation}
as follows: for any smooth family $(L_t)_{t \in \R}$ of subspaces with $L_0 = L$ set
\begin{equation}
\left. \frac{d L_t}{dt} \right|_{t = 0} : l \mapsto l'(0) \mod L
\end{equation}
where $l(t)$ is any smooth path with $l(t) \in L_t$ for all $t$ such that $l(0) = l$. In particular any linear map $\varphi \in \mathcal{L}(E,E)$ defines a vector field $X_\varphi$ on $\Grass(E)$ by
\begin{equation}\label{champ endomorphisme grassmannienne}
X_\varphi(L) : l \mapsto \varphi(l) \mod L.
\end{equation}
One easily computes $\exp(X_\varphi) L = e^{\varphi}(L)$. 

Moreover for $H$ a Lie group, $v \in \Lie h$ and $g \in H$, denote $\ad(v) \coloneqq[\cdot, v] \in \mathcal{L}(\Lie h, \Lie h)$\footnote{Be careful that the Lie bracket on $\Lie g$ is defined using right invariant vector fields, which is why the map $\ad$ differs by a sign from the classical one.} and 
\begin{equation}
\label{formule ad} 
gwg^{-1} \coloneqq \left. \frac{d}{ds} \right|_{s = 0} g e^{sw} g^{-1}.
\end{equation}
These two maps are related by the classical formula $e^v w e^{-v} = e^{\ad(v)}(w)$  for all $v,w \in \Lie h$, where the exponential of the LHS is the Lie group one and the exponential of the RHS is the exponential in $\mathcal{L}(\Lie h, \Lie h)$. In particular
\begin{equation}\label{flot grassmannienne}
\exp(X_{ad(v)}) L = e^v L e^{-v} \quad \forall v \in \Lie g
\end{equation}
in $\Grass(\Lie h)$.

\begin{proof}[Proof of \Cref{extension de l'exponentielle}]

Uniqueness follows from the density of $M \times \R_+^*$ in $\deformation{M}{\module F}$. For the existence, let $(U, (X_i)_{i \in I}, k)$ be a generating family. Set $\mathcal{U} = \{(p,t) \in M \times \R_+^* \;|\; \exp(\rho(Y_0)) (p,t) \in U \times \R_+^* \} \sqcup U \times \{0\}$. Since $Y_0$ continuously extends by $0$ on $M \times \{0\}$, the set $\mathcal{U}$ is open; we want to define $\exp_\rho(Y) : \beta^{-1}(\mathcal{U}) \rightarrow \beta^{-1}(U \times \R_+)$. 

For $(p,t) \in U \times \R_+^*$ denote
\begin{equation}
\begin{aligned}
d\iota_{(p,t)}\cdot \rho(Y_0)&= (\rho(Y_0)(p,t), \phi, 0) \\
&\in \bundle T_{(p,\Ker(\natural_{p,t}), t)}(U \times \Grass(\R^I) \times \R_+^*).
\end{aligned}
\end{equation}
We claim that $\phi$ identifies through \eqref{tangent grassmannienne} with the unique map from $\Ker(\natural_{p,t})$ to $\R^I / \Ker(\natural_{p,t})$ satisfying
\begin{equation}\label{equation differentielle iota}
\natural_{p,t}(\phi(v)) =[\natural_t(v), Y_0(\cdot, t)](p).
\end{equation}
Before proving \eqref{equation differentielle iota} let us show how it implies \Cref{extension de l'exponentielle}. 

First write $Y$ (restricted to $\beta^{-1}(U \times \R_+)$) as $Y = \sum_i b_i \theta_{k(i)}(X_i)$ with $b_i \in \ci(\beta^{-1}(U \times \R_+), \R)$, consider smooth functions $\tilde b_i \in \ci(U \times \Grass(\R^I) \times \R_+, \R)$ such that $\tilde b_i \circ \iota = b_i$ and set $w(p,L,t) = \sum_i \tilde b_i(p,L,t)e_i \in \R^I$ for $(p,L,t) \in U \times \Grass(\R^I) \times \R_+$. Then consider a smooth family of bilinear maps $(S_{p,t})_{(p,t) \in U \times \R_+}$ as given by \Cref{lemme relevement crochet} and define $Z$ the following smooth vector field on $U \times \Grass(\R^I) \times \R_+$:

\begin{equation}
\begin{aligned}
Z(p, L, t) &= (\rho(\beta_{\bundle A}(Y(p,L,t))), (v \mapsto S_{p,t}(v,w(p,L,t)) \mod L), 0) \\
&\in \bundle T_{(p,L, t)}(U \times \Grass(\R^I) \times \R_+^*).
\end{aligned}
\end{equation}
It follows from \eqref{S releve le crochet} and \eqref{equation differentielle iota} that $Z_{|\iota(U \times \R_+^*)} = d\iota \cdot \rho(Y_0)$. Indeed, for $(p,t) \in U \times \R_+^*$ and $v \in \Ker(\natural_{p,t})$:
\begin{equation}
\natural_{p,t}(S_{p,t}(v,w(p,L,t))) - [\natural_t(v), Y_0(\cdot, t)](p)= [\natural_t(v), \natural_t(w(p,L,t)) - Y_0(\cdot, t)](p) = 0
\end{equation}
since the Lie bracket of two vector fields vanishing at $p$ still vanishes at $p$, and $\natural_t(w(p,L,t))(p) = Y_0(p,t)$ by construction. Hence one can define $\exp_\rho(Y)$ on $\beta^{-1}(\mathcal{U})$ by the relation $\iota \circ \exp_\rho(Y) = \exp(Z)$; \ref{existence flot exponentielle} and \ref{derivation ancre} then follow directly since $Z$ is smooth. 

To prove \ref{extension crochet}, write $Y' = \sum_i c_i \theta_{k(i)}(X_i)$ and set
\begin{align*}
[Y, Y'] &= \sum_{i,j} [b_i \theta_{k(i)}(X_i), c_j \theta_{k(j)}(X_j)]\\
&=\sum_{i,j} b_i (\rho(\theta_{k(i)}(X_i)) \cdot c_j )\theta_{k(j)}(X_j) - c_j (\rho(\theta_{k(j)}(X_j)) \cdot b_i )\theta_{k(i)}(X_i)\\
& + b_i c_j \theta_{k(i) + k(j)}([X_i, X_j])
\end{align*}
which is well defined by \ref{derivation ancre} and satisfies the asserted relations by construction. 

Now assume that $Y$ is of the form of \ref{blup stable par conjugaison}. Up to complete $(U, (Y_i), (k_i))$ in a generating family, it amounts to assume that the coefficients $b_i$ are of the form $b_i = a_i \circ \beta$. One can thus take $\tilde b_i(p,L,t) = a_i(p,t)$ and hence $w(p,L,t) = w(p,t)$: at $t = 0$ the vector field $Z$ is thus a family, parametrized by $p$, of vector fields on $\{p\} \times \Grass(\R^I) \times \{0\}$ of the form of \eqref{champ endomorphisme grassmannienne} with $\varphi(v) = S_{p,0}(v, w(p, 0))$. Set $u_p = \natural_{p,0}(w(p,0)) \in \gr(\module F)_p$: by \eqref{S vaut presque le crochet en q zero}, $\natural_{p,0} \circ \varphi = \ad(u_p) \circ \natural_{p,0}$ hence $\natural_{p,0} \circ e^\varphi = e^{\ad(u_p)} \circ \natural_{p,0}$. \ref{blup stable par conjugaison} then follows from \eqref{flot grassmannienne}.

It remains to prove \eqref{equation differentielle iota}. Fix $(p,t) \in U \times \R_+^*$, choose a subset $J \subset I$ such that $(X_j(p))_{j \in J}$ forms a linear basis of $\bundle A_p$ and consider an element $v \in \Ker(\natural_{p,t})$. In a neighbourhood of $p$ there are uniquely defined smooth functions $(f_j)_{j \in J}$ on $M$ such that $\natural_t(v) = \sum_{j \in J} f_j t^{k_j}X_j$ with all $f_j$'s vanishing at $p$, since $\natural_{p,t}(v) = 0$; using Leibniz rule \eqref{equation Leibniz} one thus computes $[\natural_t(v),Y_0(\cdot, t)](p) = - \sum_{j \in J} (\rho(Y_0(p,t)) \cdot f_j) t^{k_j}X_j(p)$. On the other side set $v_s = v - \sum_{j \in J} f_j(p_s) e_j$ where $p_s = \exp(s\rho(Y_0(\cdot, t))) p$: then $v_0 = v$ and for all $s$ small enough the vector $v_s$ belongs to $\Ker(\natural_{p_s, t})$. Hence:
\begin{align*}
\natural_{p,t}(\phi(v)) & = \left. \frac{d}{ds}\right|_{s = 0} \natural_{p,t}(v_s)\\
& = \left. \frac{d}{ds}\right|_{s = 0} - \sum_{j \in J} f_j(p_s) t^{k_j} X_j(p)\\
& = - \sum_{j \in J} \rho(Y_0(p,t)) \cdot f_j t^{k_j}X_j(p)\\
& = [\natural_t(v), Y_0](p,t)
\end{align*}
which completes the proof.
\end{proof}

\begin{exemple}[continuation of \ref{exemple Baouendi classique} and \ref{exemple Baouendi blup}]\label{ancre paires}
Consider $\module F$ the Baouendi-Grushin filtration of depth $N$ on $M = \R^2$ and $(\bundle T M, [\cdot, \cdot ], \rho)$ the Lie algebroid with $[ \cdot, \cdot]$ the usual Lie bracket of vector fields and $\rho = \id: \bundle T M \rightarrow \bundle TM$. It is straightforward to check that $\module F$ is a Lie filtration for this structure. For $p \in M$, the Lie algebra structure on $\gr(\module F)_p$ is the following:

\begin{itemize}
\item if $p \in \R \times \R^*$ then $ \gr(\module F)_p = \Span([\partial_y]_{p,1}, [\partial_x]_{p,1})$ thus the Lie bracket is identically zero;
\item if $p \in \R \times \{0\}$ then $\gr(\module F)_p = \Span([\partial_y]_{p,1}, [y^{N-1}\partial_x]_{p,1}, \dots, [\partial_x]_{p,N})$ and the Lie bracket is
\begin{equation}
\begin{aligned}
[[\partial_y]_{p,1}, [y^{N-i}\partial_x]_{p,i}] &= (N-i)[y^{N-i-1}\partial_x]_{p,i+1} && \forall i = 1, \dots, N-1\\
[[\partial_y]_{p,1}, [\partial_x]_{p,N}] &= [[y^{N-i}\partial_x]_{p,i}, [y^{N-j}\partial_x]_{p,j}] = 0 && \forall i,j = 1, \dots, N.
\end{aligned}
\end{equation}
\end{itemize}

Now let us compute, using \Cref{blup stable par conjugaison}, the action $\Gr(\module F)_p \curvearrowright \blowup{M}{\module F}_p$ at points $p \in \R \times \{0\}$ (the action is trivial if $p \in \R \times \R^*$ since $\Gr(\module F)_p$ is abelian). 

Recall (see \Cref{exemple Baouendi blup}) that $\blowup{M}{\module F}_p = \{L_\theta^p \; |\; \theta \in \mathbb{P}^1(\R) \}$; for $v = \sum_i b_i [y^{N-i}\partial_x]_{p,i} + c[\partial_y]_{p,1} \in \gr(\module F)_p$ and $\theta \in \mathbb{P}^1(\R)$, we want to compute $e^vL_\theta^p e^{-v}$. Let $(p_n, t_n) \in M \times \R_+^*$ be a sequence converging to a certain $ (p, L_\theta^p, 0) \in \blowup{M}{\module F} \times \{0\}$, $p \in \R \times \{0\}$, for the topology of $\deformation{M}{\module F}$; denoting $p_n = (x_n, y_n)$, it is equivalent (by \eqref{eq plongement Baouendi projectif}) to say that $(p_n,t_n, [y_n, t_n]) \rightarrow (p,0, \theta)$ for the topology of $M \times \R_+ \times \mathbb{P}^1(\R)$.

Then let $f$ be a compactly supported function on $M$ with value $1$ in a neighbourhood of $p$ and consider the section of $\deformation{\bundle T M}{\module F}$ $X = \sum_i b_i \theta_i(f y^{N-i}\partial_x) + c\theta_1(f \partial_y)$ ($\theta_i$ being defined by \eqref{eq theta champs de vecteurs}). Finally let $(p_n', t_n) = \exp_\rho(X)(p_n, t_n)$ with $p_n' = (x_n', y_n')$. 

One directly computes $y_n' = y_n + t_nc$ (for $t_n$ small enough). Denoting $\theta = [a,b]$ it follows that $(p_n', t_n)$ converges to $(p, L_{[a + bc,b]}^p, 0)$ in $\deformation{M}{\module F}$. Hence \Cref{blup stable par conjugaison} implies that $e^vL_{[a,b]}^p e^{-v} = L_{[a+bc, b]}^p$; one can easily check that it is consistent with the BCH formula. Note in particular that the subgroup $\exp(L_\theta^p)$ is normal in $\Gr(\module F)_p$ only for $\theta = [1,0]$.
\end{exemple}

\section{Deformation of the groupoid}\label{section deformation groupoide}

\subsection{The deformation groupoid}

Recall that for any Lie groupoid $G \rightrightarrows \units G$, the Lie bracket defined by \eqref{def crochet de Lie algebroid} and the anchor $\rho \coloneqq dr_{|\bundle A G}$ turn $\bundle A G \rightarrow \units G$ into a Lie algebroid (see for example \cite{MoerdijkMrcun03}). Let $G$ be a Lie groupoid endowed with a filtration $\module F  = (\module F^k)_{k \geq 0}$ of the algebroid $(\bundle A G, [ \cdot, \cdot], \rho)$, in the sense of \Cref{def filtration algebroide}.

\begin{definition}
The \textbf{osculating groupoid} associated to the filtration $\module F$ is the groupoid over $\blowup{\units G}{\module F}$ denoted by $\Osc_{\module F}(G)$ and defined as a set by 
\begin{equation}
\Osc_{\module F}(G) \coloneqq \bigsqcup_{(L,p) \in \blowup{\units G}{\module F}} \Gr(\module F)_p/e^L \rightrightarrows \blowup{\units G}{\module F}
\end{equation}
where $e^L = \{e^v \; | \; v \in L\}$ is a (non necessary normal) subgroup of $\Gr(\module F)_p$. The elements of $\Osc_{\module F}(G)$ are thus pairs of the form $(p, g \mod e^L)$ with $(p,L) \in \blowup{\units G}{\module F} $ and $g \in \Gr(\module F)_p$. The groupoid structure is then given by 
\begin{itemize}
\item[•] $s(p, g \mod e^L) = (p, L)$ and $r(p, g \mod e^L) = (p, g L g^{-1})$
\item[•] $(p, g' \mod ge^Lg^{-1})\cdot(p,g \mod e^L) = (p,g'g \mod e^L)$
\item[•] $(p, g \mod e^L)^{-1} = (p, g^{-1} \mod ge^Lg^{-1})$ 
\end{itemize}
which is well defined by \Cref{blup stable par conjugaison}. Note that, at this point, $\Osc_{\module F}(G)$ is only a set-theoretic groupoid and does not have any topology or smooth structure.
\end{definition}

\begin{definition}\label{def groupoid de deformation}
The \textbf{deformation groupoid} associated to the filtration $\module F$ is the groupoid over $\deformation{\units G}{\module F}$ denoted by $\deformation{G}{\module F}$ and defined as a set by 
\begin{equation}
\deformation{G}{\module F} \coloneqq G \times \R_+^* \sqcup \mathcal \Osc_{\module F}(G) \times \{0\} \rightrightarrows \deformation{\units G}{\module F}
\end{equation}
with groupoid structure inherited by the ones of $G\times \R_+^* \rightrightarrows \units G \times \R_+^*$ and $\Osc_{\module F}(G) \rightrightarrows \blowup{\units G}{\module F}$.

Let $\mathbb X = (U, (X_i)_{i \in I}, k)$ be a generating family such that all $X_i$ are compactly supported. Set 
\begin{equation}\label{quotients exponentielle}
\begin{aligned}
\Exp^{\mathbb X} : \beta^{-1}(U \times \R_+)\times \R^I & \rightarrow \deformation{G}{\module F}\\
(p, t, v) & \mapsto (\exp(\natural_t^{\mathbb X}(v)) p, t) \quad \text{if} \; t \neq 0 \\
(p, L, 0, v) & \mapsto (p, e^{\natural_{p,0}^{\mathbb X}(v)} \mod L, 0)
\end{aligned}
\end{equation}
where $\exp$ is understood in the sense of \eqref{def exponentielle}. We will write $\Exp = \Exp^{\mathbb X}$ when there is no ambiguity on $\mathbb X$. 

We endow $\deformation{G}{\module F}$ with the finest topology such that:
\begin{enuminthm}
\item\label{topologie t non nul} the inclusion $G \times \R_+^* \hookrightarrow \deformation{G}{\module F}$ is continuous,
\item\label{topologie t nul} the maps $\Exp^{\mathbb X}$ from \eqref{quotients exponentielle} are continuous for every $\mathbb X$.
\end{enuminthm}
In other words, we use the final topology associated to the above maps. It is equivalent to ask that a function $f$ on $\deformation{G}{\module F}$ is continuous if and only if:
\begin{enuminthm}
\item $f_{|G \times \R_+^*}$ is continuous (for the product topology of $G \times \R_+^*$);
\item for any generating family, the map $f \circ \Exp$ is continuous.
\end{enuminthm}

We also endow $\Osc_{\module F}(G)$ with the topology inherited as a sub-groupoid of $\deformation{G}{\module F}$.
\end{definition}

\begin{remarque}\label{remarque restriction topologie}
Since the restrictions $\Exp_{|U \times \R_+^* \times \R^I} \rightarrow G \times \R_+^*$ are continuous for the product topology of $G \times \R_+^*$, one easily checks that, by \Cref{topologie t non nul}, the topology of $G \times \R_+^*$ as a subspace of $\deformation{G}{\module F}$ is just the usual one. Moreover, it means that one gets the same topology on $\deformation{G}{\module F}$ up to reduce the domains of the maps $\Exp$ to any open subset $\mathcal{U} \subset \beta^{-1}(U \times \R_+) \times \R^I$ containing $\beta^{-1}(U \times \{0\}) \times \R^I$.
\end{remarque}

\begin{theoreme}\label{theoreme topologie groupoide}
The space $\deformation{G}{\module F} $ is locally compact and Hausdorff, the maps of source, range, inversion, inclusion of units and multiplication are continuous and the source and the range maps are open. 
\end{theoreme}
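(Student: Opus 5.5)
The plan is to understand the final topology locally through the maps $\Exp^{\mathbb X}$ and to show that, near the boundary $t=0$, these maps are quotient maps onto open subsets. Once that is in place, every claim becomes a local statement about continuous functions on $\beta^{-1}(U\times\R_+)\times\R^I$, which is locally compact and Hausdorff by \Cref{topologie independante de la base adaptee}.

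\textbf{Local charts at $t=0$.} Fix a generating family $\mathbb X=(U,(X_i)_{i\in I},k)$ with compactly supported $X_i$, and pick $J\subset I$ such that $(X_j(p))_{j\in J}$ is a basis of $\bundle A_pG$. I would first show that the restriction of $\Exp^{\mathbb X}$ to $\mathcal U\times(\R^J\oplus \Ker$-complement$)$ is open. The cleaner route is to show that $\Exp^{\mathbb X}$ has local continuous "sections modulo the kernel". For $t>0$ the map $v\mapsto \exp(\natural_t(v))p$ is a submersion onto a neighbourhood of $p$ in $G_p$. Its fibres near $0$ are, up to first order, $\Ker(\natural_{p,t})$. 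At $t=0$ the fibres of $v\mapsto e^{\natural_{p,0}(v)}\bmod e^L$ are, up to first order, $\natural_{p,0}^{-1}(L)$.

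I would then make this precise with the twisted map
\[
\Psi(p,\Lambda,t,v,w)=\exp(\natural_t(v))\exp(\natural_t(w))p ,\qquad v\in \Lambda^\perp,\ w\in\Lambda,
\]
where $\Lambda=\iota(p,t)$. This realises $\Exp^{\mathbb X}$ as the composition of a projection along the bundle $\Lambda$ over $\iota(\beta^{-1}(U\times\R_+))$ with a map that is injective near the zero section on $\Lambda^\perp$, uniformly in $(p,\Lambda,t)$ including $t=0$. The BCH formula, together with the nilpotency estimate of \Cref{lemme relevement crochet}, controls the $t\to0$ limit: rescaled group laws converge to the BCH law of $\gr(\module F)_p$.

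The consequence is that a neighbourhood of $\beta^{-1}(U\times\{0\})$-points in $\deformation G{\module F}$ is homeomorphic to an open subset of the tautological quotient bundle $\iota^*(\R^I/\Lambda)$, which is locally compact Hausdorff. Translating by $\Osc$-elements via \eqref{eq multiplication groupoide flots} extends these charts from identity-neighbourhoods to all of $\deformation G{\module F}$.

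\textbf{Continuity of the structure maps.} Hausdorffness and local compactness follow from the charts, together with the fact that $G\times\R_+^*$ is open. For the source map I would compute $s\circ\Exp(p,t,v)=(p,t)$ and $s\circ\Exp(p,L,0,v)=(p,L,0)$, so $s\circ\Exp$ is a projection and is therefore continuous and open. The range map is handled the same way: $r\circ\Exp(a,v)=\exp_\rho(Y_v)(a)$, where $Y_v=\sum v_i\theta_{k(i)}(X_i)$, by \Cref{blup stable par conjugaison} and the definition of $\exp_\rho$. Continuity and openness then follow from \Cref{extension de l'exponentielle}, since $\exp_\rho(Y_v)$ is a homeomorphism depending continuously on $v$. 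For the inversion, $\Exp(a,v)^{-1}=\Exp(r(\Exp(a,v)),-v)$. For the multiplication, BCH gives
\[
\Exp(r(\Exp(a,v)),w)\,\Exp(a,v)=\Exp(a,\mathrm{BCH}_{S}(w,v)),
\]
where $\mathrm{BCH}_S$ is the BCH series built from the lifted brackets $S_{p,t}$ of \Cref{lemme relevement crochet}. This is a finite sum by nilpotency at $t=0$, and valid near $t=0$ for small arguments. Continuity of composable pairs requires showing that $G^{(2)}$ is locally parametrised by such pairs, which again comes from the charts.

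\textbf{Main obstacle.} The hardest step is the uniform local injectivity/openness of $\Exp$ across $t=0$. The issue is that the limits $\Ker(\natural_{p_n,t_n})\to\natural_{p,0}^{-1}(L)$ are only limits of subspaces, and $\iota$'s image is not a manifold, so an inverse function theorem cannot be applied directly. I would first try to apply the inverse function theorem on the ambient manifold $U\times\Grass(\R^I)\times\R_+\times\R^I$ to the smooth extension of $\Psi$. Here the rescaled map $(p,\Lambda,t,u)\mapsto t$-dilated logarithm of $\exp(\natural_t(u))$ is smooth in $t$ down to $0$. I would then restrict the resulting local homeomorphism to the closed image of $\iota$.
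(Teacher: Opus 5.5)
Your lifts of the source, range and inversion maps are correct and are exactly the paper's ($\tilde s$ the first projection, $\tilde r(a,v)=\exp_\rho(\sum_i v_i\theta_{k(i)}(X_i))a$, $\tilde i(a,v)=(\tilde r(a,v),-v)$). They do give continuity of $s,r,i$ and openness of $s$ and $r$. The gap is in every step that needs to know exactly when two parameters have the same image under $\Exp$, starting with the product. Your identity $\Exp(r(\Exp(a,v)),w)\,\Exp(a,v)=\Exp(a,\BCH_S(w,v))$ fails for $t>0$. The reason is that $S_{p,t}$ only records $[\natural_t(u),\natural_t(v)]$ at the single point $p$, whereas $\exp(\natural_t(w))\exp(\natural_t(v))p$ depends on the sections near $p$ (and BCH does not compute compositions of flows of merely smooth vector fields anyway). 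For instance, take the pair groupoid of $\R$ with $X_1=\partial_x$, $X_2=x^2\partial_x$ in degree $1$ and the choice $[X_1,X_2]=2xX_1$. Then $S_{0,t}=0$, so $\BCH_S(e_2,e_1)=e_1+e_2$. Yet for small $t>0$ the arrow $\exp(\natural_t(e_2))\exp(\natural_t(e_1))0$ has range $t/(1-t^2)$, while $\exp(\natural_t(e_1+e_2))0$ has range $\tan t$. Since the topology is a final topology, continuity of the multiplication needs an \emph{exact} lift, and producing one is the technical core of the paper. This is done by the maps $\phi_{p,t}$ of \Cref{approximation du produit}, smooth in $(p,t)$ down to $t=0$, where they realise the BCH product and $v\mapsto\phi_{p,0}(u,v)$ is invertible. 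They are obtained by integrating the lifted vector fields of \Cref{lemme lift champs de vecteurs Exp}, i.e.\ a derivative-of-the-exponential ODE along the moving base points $p(t,-\tau v)$. The same object is missing from your chart step. $\Psi(p,\Lambda,t,v,w)$ is trivially independent of $w\in\Lambda$, but this does not exhibit $\Exp$ as a projection along $\Lambda$ followed by an injective map, because $\Exp(a,v+w)\neq\Exp(a,v)$ in general. The fibres of $\Exp(a,\cdot)$ are the nonlinear sets $\{\phi_{p,t}(v,w)\;|\;w\in\Lambda\}$.

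The uniform injectivity modulo $\Lambda$ that you single out as the main obstacle does not come from convergence of rescaled group laws either. For $t>0$ it is a statement about the isotropy of $G$ and returning orbits of anchor flows. The input the paper uses is the period bounding lemma (\Cref{periodic bounding}) combined with the rescaling $\tilde\alpha_t(e_i)=t^{k(i)}e_i$: if $\tilde\alpha_t(v)$ lies in a fixed neighbourhood $W$ of $0$, then $\exp(\natural_t(v))p=p$ if and only if $v\in\Ker(\natural_{p,t})$. Together with $\phi$, this shows that near $t=0$ one has $\Exp(a,u)=\Exp(a,v)$ exactly when $\phi_{p,t}(-u,v)\in\Lambda$, whose negation is an open condition. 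That is how Hausdorffness is proved, for arbitrary $g_1\neq g_2 \bmod e^{L_0}$ and not only near the identity, since $\tilde\alpha_t\to 0$. Your fallback, an inverse function theorem on $U\times\Grass(\R^I)\times\R_+\times\R^I$ for a ``$t$-dilated logarithm'', is not well posed. $\Psi$ takes values in $G$ for $t>0$ and in $\Gr(\module F)_p/e^L$ for $t=0$, so it has no smooth extension to a map between fixed manifolds, and $G$ has no logarithm to rescale; a common chart is precisely what is at stake. Even granting local injectivity, three steps are missing:
\begin{enumerate}
\item Local charts only yield local Hausdorffness.
\item Translating charts by elements of $\Osc_{\module F}(G)$ requires the homeomorphisms $\exp(Y)$ of \Cref{prop flot s fibres}, which again rest on \Cref{lemme lift champs de vecteurs Exp}, because \eqref{eq multiplication groupoide flots} only holds for $t>0$.
\item Showing that images of $\Exp^{\mathbb X}$ are open in the final topology requires controlling their preimages under $\Exp^{\tilde{\mathbb X}}$ for every other generating family $\tilde{\mathbb X}$. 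This is \Cref{lemme ouvert independant base}, an implicit function argument on $T_{p,t}(\phi_{p,t}(-w,v))=0$.
\end{enumerate}
The paper never builds charts. It shows that $\Exp$ is open on a neighbourhood $\mathcal V$ of $\beta^{-1}(U\times\{0\})\times\R^I$ by sliding along the fibres of $\Exp$ with $u\mapsto\phi_{p,t}(u,w(L))$, where $L\mapsto w(L)\in L$ is a continuous section of the tautological bundle. It then gets local compactness from this openness together with Hausdorffness.
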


\begin{remarque}
The groupoid  $\deformation{G}{\module F} \rightrightarrows \deformation{\units G}{\module F}$ is thus a locally compact groupoid, as defined in \cite{Renault80}\footnote{Renault does not include the openness of the source and range maps in his definition; however he showed that it is a necessary condition for the existence of a Haar system.}.
\end{remarque}

The key idea in the proof of \Cref{theoreme topologie groupoide} is to lift certain "well-behaved" equivariant vector fields $Y$ on $\deformation{G}{\module F}$ through $\Exp$, as vector fields $\tilde Y$ on $\beta^{-1}(U \times \R_+) \times \R^I$, and then to follow the flow of $\tilde Y$. Before proving \Cref{theoreme topologie groupoide}, we shall start by explaining this lifting procedure.

\subsection{Lie algebroid of the deformation}

We think of $(\deformation{\bundle A G}{\module F}, [\cdot, \cdot], \rho)$ as the Lie algebroid of the groupoid $\deformation{G}{\module F}$ ($[\cdot, \cdot]$ and $\rho$ being defined by \Cref{extension de l'exponentielle}). Even though $\rho$ is not a bundle morphism, hence $(\deformation{\bundle A G}{\module F}, [\cdot, \cdot], \rho)$ is not a Lie algebroid in the classical sense and $\deformation{G}{\module F}$ is not a Lie groupoid, we still have the following.

\begin{proposition}\label{prop flot s fibres}
Let $Y \in \ci(\deformation{\units G}{\module F}, \deformation{\bundle A G}{\module F})$ a section of the form $Y = \sum_j (a_j \circ \beta) \theta_{k_j}(Y_j)$, with $a_j \in \ci(M \times \R_+, \R)$ and $(Y_j, k_j)_{j \in J}$ a finite family satisfying $Y_j \in \module F^{k_j}_{\comp}$.
\begin{enuminthm}
\item\label{existence extension} The homeomorphism $\exp(Y_{|\units G \times \R_+^*}): G \times \R_+^* \rightarrow G \times \R_+^*$ uniquely extends to a homeomorphism $\exp(Y) : \deformation{G}{\module F} \rightarrow \deformation{G}{\module F}$.
\item\label{description extension en zero} For $(p, g \mod L, 0) \in \Osc_{\module F}(G) \times \{0\}$ one has 
\begin{equation}
\exp(Y)(p, g \mod L, 0) = (p, e^{\tilde Y(p)}g \mod L, 0)
\end{equation}
where $\tilde Y(p) = \sum_j a_j(p) [Y_j]_{p, k_j}$. Note that $\tilde Y(p) \in \gr(\module F)_p$ is a lift of $Y(p, L, 0) \in \gr(\module F)_p / L$.
\item\label{source but exponentielle} For any $\gamma \in \deformation{G}{\module F}$ one has $s(\exp(Y)\gamma) = s(\gamma)$ and $r(\exp(Y) \gamma) = \exp_\rho(Y)r(\gamma)$.
\item\label{associativite exponentielle} For any composable pair $(\gamma, \gamma') \in \deformation{G}{\module F}^{(2)}$, one has 
\begin{equation}
(\exp(Y)\gamma)\gamma' = \exp(Y)(\gamma\gamma').
\end{equation}
\end{enuminthm}
\end{proposition}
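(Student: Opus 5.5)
The plan is to \emph{define} $\exp(Y)$ explicitly on both pieces of $\deformation{G}{\module F}$: by $\exp(Y_{|\units G \times \R_+^*})$ on $G \times \R_+^*$, and by the formula of \ref{description extension en zero} on $\Osc_{\module F}(G) \times \{0\}$. Then I would check continuity through the charts $\Exp^{\mathbb X}$. Uniqueness in \ref{existence extension} comes from density of $G \times \R_+^*$ in $\deformation{G}{\module F}$ together with Hausdorffness. Both of these should follow once we know that $\Exp$ has dense image from the $t>0$ part, which is immediate from the definition of $\blowup{\units G}{\module F}$.

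Properties \ref{source but exponentielle} and \ref{associativite exponentielle} are purely algebraic once the formula is fixed. For $t>0$ they are \eqref{eq multiplication groupoide flots} and the identities $s(\exp(X)\gamma)=s(\gamma)$, $r(\exp(X)\gamma)=\exp(\rho(X))r(\gamma)$. At $t=0$ we have $r(e^{\tilde Y}g \bmod L) = e^{\tilde Y}gLg^{-1}e^{-\tilde Y}$, which matches $\exp_\rho(Y)(p,gLg^{-1},0)$ by \Cref{blup stable par conjugaison}, and left multiplication by $e^{\tilde Y}$ commutes with the right multiplication defining the groupoid product. The inverse map is $\exp(-Y)$, which has the same shape, so once continuity is proved $\exp(Y)$ is a homeomorphism.

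The main step is continuity. Following the announced strategy, I would lift $Y$ through $\Exp$. Fix a generating family $\mathbb X=(U,(X_i),k)$ with compactly supported $X_i$, completed so that it contains the $Y_j$'s, and write $Y=\sum_i(b_i\circ\beta)\theta_{k(i)}(X_i)$ with $b_i$ smooth on $U\times\R_+$. For $t>0$ we want $\exp(sY)\Exp(p,t,v)=\Exp(\Phi_s(p,t,v))$ with $\Phi_s(p,t,v)=(p,t,v_s)$, keeping the source $p$ fixed. This requires solving $\exp(\natural_t(v_s))=\exp(s\natural_t(b(\cdot,t)))\exp(\natural_t(v))$ in the $s$-fibre. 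Locally I would do this through a BCH-type expansion lifted to $\R^I$, using the bilinear maps $S_{p,t}$ of \Cref{lemme relevement crochet}. Set $\mathrm{BCH}_S(u,v)=u+v-\tfrac12 S_{p,t}(u,v)+\dots$, built from $S_{p,t}$; this depends smoothly on $(p,t)$ including $t=0$.

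At $t=0$ the maps $u\mapsto S_{p,0}(u,\cdot)$ are nilpotent, so the series is finite, and by \eqref{S vaut presque le crochet en q zero} it projects through $\natural_{p,0}$ onto the genuine BCH in $\gr(\module F)_p$. At $t>0$ the series is not finite. Rather than exact equality, I would then use a vector field: lift the right-invariant field $Y^G$ to a smooth vector field $\tilde Y$ on $U\times\R_+\times\R^I$ of the form $\partial_s v = \psi_{p,t}(v)(b(p,t))$, where $\psi$ is the lifted ``$\mathrm{ad}/(1-e^{-\mathrm{ad}})$'' series built from $S_{p,t}$. This should be valid for $v$ in a neighbourhood of $0$ that is uniform as $t\to0$, by rescaling via $\tilde\alpha$. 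The degree structure ensures $\psi_{p,t}$ is smooth up to $t=0$, where it becomes a polynomial.

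By \Cref{remarque restriction topologie}, it suffices to use such a neighbourhood $\mathcal U$. Flowing $\tilde Y$ commutes with $\Exp$ on $\mathcal U$, at $t>0$ by construction and at $t=0$ by the nilpotent BCH identity. Hence $\exp(sY)\circ\Exp=\Exp\circ\exp(s\tilde Y)$, which is continuous, and by the final-topology definition $\exp(Y)$ is continuous.

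For large $v$ I would compose: $\exp(Y)=\exp(Y/n)^n$, and for the charts use $\Exp(p,t,v)=\exp(\natural_t(v))p$. The hardest point I expect is precisely this uniform-in-$t$ convergence and smoothness of the lifted BCH field near $t=0$. If it fails, I would try to replace exact BCH with a first-order ODE argument in the Grassmannian-augmented space $U\times\Grass(\R^I)\times\R_+\times\R^I$, mimicking the proof of \Cref{extension de l'exponentielle}.
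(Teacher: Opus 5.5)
Your overall architecture matches the paper's: define $\exp(Y)$ piecewise, get uniqueness from density, read off \ref{source but exponentielle} and \ref{associativite exponentielle} algebraically, and prove continuity by lifting $Y$ to a vector field on the chart space and using the final topology. The gap is the lift at $t>0$, exactly where you write ``at $t>0$ by construction''.

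Relation \eqref{S releve le crochet} only lifts the bracket \emph{at the single point} $p$. As a section, $[\natural_t(v),\natural_t(w)]$ is $x\mapsto\natural_{x,t}(S_{x,t}(v,w))$, whose coefficients vary with $x$. The Leibniz rule then gives $[\natural_t(v),[\natural_t(v),\natural_t(w)]](p)=\natural_{p,t}\bigl(S_{p,t}(v,S_{p,t}(v,w))\bigr)+\natural_{p,t}\bigl((\rho(\natural_t(v))\cdot S_{\cdot,t}(v,w))(p)\bigr)$. The second term, made of derivatives of the structure functions along the anchor, is lost by any series in the frozen operator $S_{p,t}(v,\cdot)$. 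At $t>0$, ``$\mathrm{ad}_{\natural_t(v)}$ at $p$'' is not an endomorphism of $\bundle A_pG$, so there is nothing for $\psi_{p,t}$ to be a function of. Consequently your field does not satisfy $d\Exp\cdot\tilde Y=Y^{G\times\R_+^*}\circ\Exp$, its flow does not intertwine with $\exp(Y)$, and continuity does not follow.

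There is a second error in the same formula. The right-invariant field at $\gamma=\exp(\natural_t(v))p$ is $dR_\gamma\,Y(r(\gamma))$, so the coefficients must be evaluated at $q=\exp(\rho(\natural_t(v)))p$, not at $p$. Your formula is correct only at $t=0$, where $\natural_{p,0}\circ S_{p,0}(v,\cdot)=\ad(\natural_{p,0}(v))\circ\natural_{p,0}$ is a genuine intertwining of linear maps and therefore passes to functions of them.

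The missing idea (\Cref{lemme lift champs de vecteurs Exp}) is to let the base point move along the anchor flow. Set $p(t,u)=\exp(\rho(\natural_t(u)))p$ and solve the linear ODE $\partial_\tau f_{p,t,v}=f_{p,t,v}(\tau)\,S_{p(t,-\tau v),t}(v,\cdot)$ with $f_{p,t,v}(0)=\id$. This lifts exactly the pushforward of $\natural_t(w)$ by the flow of $\natural_t(v)$; the moving point comes from the factor $c\circ\phi^{-1}$ in $\phi_*(cW)$. Then $h=\int_0^1 f$ lifts the differential of the exponential, as in \eqref{eq derivee chemin h}, and one sets $\tilde Z=h^{-1}w(q,t)$.

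This also removes your convergence concerns. Everything is smooth up to $t=0$ because $p(0,u)=p$. Moreover $h_{p,0,v}=(e^{A}-1)/A$ with $A=S_{p,0}(v,\cdot)$ nilpotent is invertible for \emph{every} $v$, so the open set where $h$ is invertible contains all of $U\times\{0\}\times\R^I$. No power series, no small-$v$ restriction and no $\exp(Y/n)^n$ trick are needed. The last would not help anyway: the obstruction you feared concerns the size of $v$ in the chart, which rescaling $Y$ does not change.
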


\Cref{prop flot s fibres} is a corollary of the following lemma.

\begin{lemme}\label{lemme lift champs de vecteurs Exp}
Let $(U, (X_i)_{i \in I}, k)$ be a generating family such that all $X_i$'s are compactly supported. There exists $\mathcal{U} \subseteq U \times \R_+ \times \R^I$ an open neighbourhood of $U \times \{0\} \times \R^I$ such that the following holds. 

Let $(c_i)_{i \in I}$ be a family of functions $c_i \in \ci(\units G \times \R_+, \R)$ and $Z = \sum_i c_i t^{k(i)} X_i \in \ci(\units G \times \R_+, \bundle A G \times \R_+)$. There exists $\tilde Z$ a vector field on $\mathcal{U}$, such that:
\begin{enuminthm}
\item for all $(p,t,v) \in \mathcal{U}$, $\tilde Z(p,t, v)$ is tangent to $\{(p,t)\} \times \R^I$;
\item\label{Z tilde t non nul} for all $(p,t, v) \in \mathcal{U} \cap (U \times \R_+^* \times \R^I)$, one has 
\begin{equation}
(d\Exp \cdot \tilde Z)(p,t,v) = Z^{G \times \R_+^*}(\Exp(p,t,v)),
\end{equation}
where $Z^{G \times \R_+^*}$ denotes the equivariant extension of $Z$ to the groupoid $G \times \R_+^* \rightrightarrows \units G \times \R_+^*$, see \eqref{def extension equivariante};
\item\label{Z tilde t nul} for all $(p,0,v) \in \mathcal{U} \cap (U \times \{0\} \times \R^I)$, one has 
\begin{equation}
\natural_{p,0}(\tilde Z (p,0,v))= \left. \frac{d}{d\tau}\right|_{\tau = 0} \BCH(\tau w(p), \natural_{p,0}(v))
\end{equation}
where $w(p) = \sum_i c_i(p) [X_i]_{p, k(i)} \in \gr(\module F)_p$.
\end{enuminthm}

Note that, in \ref{Z tilde t non nul}, the map $\Exp: \mathcal{U} \cap (U \times \R_+^* \times \R^I) \rightarrow \deformation{G}{\module F}$ is well defined even though $\mathcal{U}$ is not a subset of $\beta^{-1}(U \times \R_+) \times \R^I$, since $U \times \R_+^*$ canonically embeds into $\beta^{-1}(U \times \R_+)$.
\end{lemme}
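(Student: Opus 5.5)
The plan is to construct $\tilde Z$ as the inverse of a lifted ``differential of the exponential'', written in the coordinates $\R^I$ given by the generating family. For $t>0$ set $V=\natural_t(v)$, a compactly supported section of $\bundle A G$. The classical formula for the derivative of the groupoid exponential along right-invariant fields reads
\begin{equation*}
\left.\frac{d}{d\sigma}\right|_{\sigma=0}\exp(V+\sigma U)p = W_{V}(U)^G\big(\exp(V)p\big),\qquad W_V(U) = \int_0^1 \big(\exp(sV)\big)_*U\,ds ,
\end{equation*}
with the appropriate sign convention for right-invariant fields. Here $(\exp(sV))_*U$ is the pushforward of sections of $\bundle A G$ by the flow, and it satisfies $\frac{d}{ds}(\exp(sV))_*U = [V,(\exp(sV))_*U]$ up to sign. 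Condition \ref{Z tilde t non nul} then reads
\begin{equation*}
W_V(\natural_t(\tilde Z))\big(r(\exp(V)p)\big) = Z\big(r(\exp(V)p),t\big).
\end{equation*}
Thus I must lift the linear map $U\mapsto W_V(U)$, evaluated at the single point $q=r(\Exp(p,t,v))$, to a linear map $\Psi_{p,t,v}:\R^I\to\R^I$. This lift should depend smoothly on $(p,t,v)$ up to $t=0$ and satisfy
\begin{equation*}
\natural_{q,t}\circ\Psi_{p,t,v} = \mathrm{ev}_q\circ W_V\circ\natural_t .
\end{equation*}
I would then take $\tilde Z(p,t,v)=\Psi_{p,t,v}^{-1}\big(\sum_i c_i(q,t)e_i\big)$, which is tangent to the fibre $\{(p,t)\}\times\R^I$ by construction.

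To build $\Psi$, I would use the bilinear maps $S_{p,t}$ of \Cref{lemme relevement crochet}. Because of point-dependence, the iterated bracket cannot be lifted naively: $S$ lifts $[\natural_t u,\natural_t v]$ only for constant $u,v$. So I would lift the ODE $\frac{d}{ds}U_s=[V,U_s]$ to a finite-dimensional ODE along the integral curve $p_s=\exp(s\rho(V))p$ of the base flow. The unknown is $u(s)\in\R^I$, with
\begin{equation*}
u'(s) = S_{p_s,t}(u(s),v) + (\text{correction terms}),
\end{equation*}
and the correction terms are chosen by Leibniz's rule \eqref{equation Leibniz} exactly as in the proof of \eqref{equation differentielle iota}. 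The requirement is that $\natural_{p_s,t}(u(s))$ equals the relevant pushed-forward section at $p_s$. The coefficients of this ODE are built from the $a_{i,j,u}$ and the $t^{k(i)+k(j)-k(u)}$ factors, so they are smooth in $(p,t,v,s)$ including $t=0$. Integrating in $s$ over $[0,1]$ gives $\Psi_{p,t,v}$.

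At $t=0$ we have $\rho(V)=0$, so the ODE becomes linear with constant coefficients $u'=S_{p,0}(u,v)$. Its solution is $e^{sA}$ with $A=S_{p,0}(\cdot,v)$, which is $N$-nilpotent by \Cref{lemme relevement crochet}. Hence $\Psi_{p,0,v}=\int_0^1 e^{sA}\,ds$ is unipotent, in particular invertible, and it is defined for every $v$. Through \eqref{S vaut presque le crochet en q zero}, $\natural_{p,0}\circ\Psi_{p,0,v}$ is the operator $\frac{e^{\ad}-1}{\ad}$ of $\natural_{p,0}(v)$ composed with $\natural_{p,0}$. Inverting it gives the familiar formula $\frac{d}{d\tau}\BCH(\tau w,\natural_{p,0}(v))$, which I would check against \eqref{eq BCH} with the paper's sign conventions. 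This yields \ref{Z tilde t nul}.

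I would define $\mathcal U$ as the set of $(p,t,v)$ for which the ODE solution exists up to $s=1$, the flow point $p_s$ stays in $U$, and $\Psi_{p,t,v}$ is invertible. By continuous dependence of ODE solutions on parameters, this set is open, and it contains $U\times\{0\}\times\R^I$ by the computation above. Compact support of the $X_i$ ensures the flows are complete. I expect the main obstacle to be the correct lifted ODE for $t>0$, that is, handling the $p$-dependence of the coefficients so that the identity intertwining $\natural_{q,t}\circ\Psi$ with evaluation of $W_V$ at $q$ holds exactly and not just to first order. I would establish it by differentiating both sides in $s$ and using \eqref{S releve le crochet} together with the vanishing-at-a-point argument from the proof of \eqref{equation differentielle iota}.
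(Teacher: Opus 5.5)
Your skeleton is the paper's. $\tilde Z$ is the inverse of a lift of the differential of $\exp$, applied to $\sum_i c_i(q,t)e_i$ with $q=\exp(\rho(\natural_t(v)))p$. Invertibility at $t=0$ comes from nilpotency of $S_{p,0}$, $\mathcal U$ is cut out by that invertibility, and point iii) is the derivative-of-the-exponential formula.

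The gap is in the one step that carries the content for $t>0$: constructing $\Psi_{p,t,v}$ with $\natural_{q,t}\circ\Psi_{p,t,v}=\mathrm{ev}_q\circ W_V\circ\natural_t$. You leave the ODE undetermined (``correction terms''), and the recipe you do give, as stated, computes the wrong operator. Let $R(a,b)$ be the propagator of your equation along $p_s=\exp(s\rho(V))p$, so that $u(s)=R(0,s)u(0)$ lifts a vector of $\bundle A_{p_s}G$. Integrating in $s$ then produces $\int_0^1R(0,s)\,ds$, which adds up lifts of vectors sitting over different points; applying $\natural_{q,t}$ to it is meaningless. By variation of constants, $W_V(U)(q)=\int_0^1((\exp(sV))_*U)(q)\,ds$ is the integral over $s$ of $U(p_s)$ \emph{transported from $p_s$ to the endpoint $q$}, whose lift is $\int_0^1R(s,1)\,ds$. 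The two agree at $t=0$: there $p_s\equiv p$ and both equal $\int_0^1e^{sA}\,ds$, so your treatment of iii) is fine. For $t>0$ they differ in general, because $s\mapsto S_{p_s,t}(\cdot,v)$ is not constant.

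The missing idea is to take a linear map as the unknown rather than a vector. Let $f(\tau)\in\mathcal L(\R^I)$ lift $w\mapsto((\exp(\tau V))_*\natural_t(w))(q)$. Since $[V,\natural_t(w)]=\sum_u S_{x,t}(v,w)_u\,t^{k(u)}X_u$ with coefficients smooth in $x$, and pushing forward by the flow only composes these coefficients with $\exp(-\tau\rho(V))$, $f$ satisfies the closed linear equation $f(0)=\id$, $\partial_\tau f\cdot w=f(\tau)\cdot S_{\exp(-\tau\rho(V))q,t}(v,w)$ (sign as in the paper's conventions). Here $S$ acts on the constant input $w$, never on the unknown. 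So no Leibniz corrections appear and smoothness up to $t=0$ is automatic. Then $h_{q,t,v}=\int_0^1f(\tau)\,d\tau$ is the required lift, and this is exactly the paper's construction: $\mathcal U$ is defined by invertibility of $h_{p(1,v),t,v}$, and $\tilde Z=h_{p(1,v),t,v}^{-1}w(p(1,v),t)$. The identity \eqref{eq derivee chemin h}, which the paper takes from Mohsen, is your $W_V$ formula combined with this ODE.

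Corrections modelled on the proof of \eqref{equation differentielle iota} would actually be harmful. They use a frame $(X_j)_{j\in J}$ of $\bundle A_pG$ at fixed $t>0$, which is not encoded in $u(s)$ and degenerates as $t\to0$. Your vector equation with no corrections at all, $u'=\pm S_{p_s,t}(v,u)$, does lift transport along $p_s$ exactly. But it only gives $R(0,s)$; to use it you would still need $R(s,1)=R(0,1)R(0,s)^{-1}$, that is, the matrix solution.
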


\begin{proof}
Let $S_{p,t} : \R^I \times \R^I \rightarrow \R^I$, $(p,t) \in U \times \R_+$ be a family as given by \Cref{lemme relevement crochet}. For $(p,t, v) \in U \times \R_+ \times \R^I$ let
\begin{equation}
p(t,v) = \exp(\rho( \natural_t(v)))p 
\end{equation}
and $\mathcal{V} = \{(p,t,v) \in U \times \R_+ \times \R^I \; |\; p(t,\tau v) \in U \; \forall \tau \in [-1,1] \}$. For $(p,t,v) \in \mathcal{V}$, consider the path $(f_{p,t,v}(\tau))_{\tau \in [-1, 1]}$, $f_{p,t,v}(\tau) \in \mathcal{L}(\R^I)$ that solves the linear ODE 
\begin{align}
f_{p,t,v}(0) &= \id \\
\label{EDO f}\frac{\partial f_{p,t,v}}{\partial \tau} \cdot w &= f_{p,t,v}(\tau) \cdot S_{p(t,-\tau v),t}(v,w) \quad \forall w \in \R^I.
\end{align}
Now set 
\begin{equation}
h_{p, t, v} = \int_0^1 f_{p,t,v}(\tau) d\tau \in \mathcal{L}(\R^I)
\end{equation}
Since $p(0, u) = p$ for all $(p,u) \in U \times \R^I$, one has
\begin{equation}
f_{p,0,v}(\tau) = e^{\tau S_{p,0}(v, \cdot)}.
\end{equation}
Recall that one can choose the family $S_{p,t}$ such that $S_{p,0}(v, \cdot)$ is nilpotent for all $(p,v) \in U \times \R^I$. Since $x \mapsto (e^x - 1)/x$ has value $1$ at $x= 0$, the endomorphisms $h_{p, 0,v}$ are invertible for all $(p,0,v) \in \mathcal{V} \cap( U \times \{0\} \times \R^I)$; set 
\begin{equation}
\mathcal{U} = \left\{ (p,t,v) \in \mathcal{V} \; |\; h_{p(1,v),t,v} \in \Gl(\R^I) \right\} .
\end{equation}
For $(p,t, v) \in \mathcal{U}$, let $w(p,t) = \sum_i c_i(p,t) e_i \in \R^I$ and set
\begin{equation}
\tilde Z(p,t,v) = \left(h_{p(1,v), t,v} \right)^{-1} \cdot w(p(1,v),t) \in \R^I \subset \bundle T_{p,t,v} (\mathcal{U}).
\end{equation}
Let $w_0 \in \R^I$ and $q = p(1,v)$. We claim that 
\begin{equation}\label{eq derivee chemin h}
\left. \frac{d}{ds}\right|_{s=0} \exp(\natural_t( v + sw_0)) \exp(-\natural_t(v)) q = \natural_{q,t}(h_{q, v,t} \cdot w_0).
\end{equation} 
This claim is essentially the second equation of (3.26) in \cite{Mohsen26}, we refer to Mohsen for the detailed proof.

Set $w_0 = \tilde Z (p,t,v)$ and $\gamma = \Exp(p,t,v) = \exp(\natural_t(v))p$, notice that $r(\gamma) = q$ and denote $R_\gamma : G_q \rightarrow G_p$ the right multiplication by $\gamma$. We thus get from \eqref{eq derivee chemin h}:
\begin{align*}
&(d\Exp \cdot \tilde Z)(p,t,v)\\
=& \left. \frac{d}{ds}\right|_{s=0} \exp(\natural_t(v + s\tilde Z (p,t,v))) p\\
=& \left. \frac{d}{ds}\right|_{s=0} R_\gamma \left(  \exp(\natural_t(v + s\tilde Z (p,t,v))) \exp(-\natural_t(v)) \right)\\
=& \left(d_q R_\gamma \right) \natural_{q,t}(h_{q, v,t} \cdot \tilde Z (p,t,v))\\
=&\left(d_q R_\gamma \right) \natural_{q,t}(w(q, t)) \\
=& Z^{G \times \R_+^*}(\gamma)
\end{align*}
hence \ref{Z tilde t non nul} holds.

Finally \ref{Z tilde t nul} is a consequence of the identity $\natural_{p,0} \circ S_{p,0}(v, \cdot) = \ad_{\natural_{p,0}(v)} \circ \natural_{p,0}$, see \Cref{S vaut presque le crochet en q zero}, and of the general formula for the derivative of the exponential on a Lie group, see \cite[Theorem 5 Section
1.2]{Rossmann06}.
\end{proof}

\begin{proof}[Proof of \Cref{prop flot s fibres}]

Let $(U, (X_i)_{i \in I}, k)$ be a generating family such that all $X_i$'s are compactly supported, $\mathcal{U}$ the open set given by \Cref{lemme lift champs de vecteurs Exp} and $(c_i)_{i \in I}$ a family of functions $c_i \in \ci(U \times \R_+, \R)$ such that $Y_{|\beta^{-1}(U \times \R_+)} = \sum_i (c_i \circ \beta) \theta_{k(i)}(X_i)$ (such a family always exists by decomposing the $Y_j$'s in the family $(X_i)_{i \in I}$). Set $\mathcal{U}' = \{(a, v) \in \beta^{-1}(U \times \R_+) \times \R^I \; |\; (\beta(a), v) \in \mathcal{U} \}$ and lift the vector field $\tilde Z$ on $\mathcal{U}$, given by \Cref{lemme lift champs de vecteurs Exp} applied to the family of functions $(c_i)_{i \in I}$,   as a vector field $\hat Z$ on $\mathcal{U}'$. The diagram 
\begin{equation}\label{eq relevement champs de vecteurs}
\begin{tikzcd}
\mathcal{U}'  \ar[r, "\exp(\hat Z)"] \ar[d, "\Exp"'] & \beta^{-1}(U \times \R_+) \times \R^I \ar[d, "\Exp"] \\
\deformation{G}{\module F} \ar[r, "\exp(Y)"] & \deformation{G}{\module F}
\end{tikzcd}
\end{equation}
then commutes by construction, with $\exp(Y)$ the map described in \ref{existence extension} and \ref{description extension en zero}; it directly implies the continuity of $\exp(Y)$ in the neighbourhood of $\Osc_{\module F}(G) \times \{0\}$. The points \ref{source but exponentielle} and \ref{associativite exponentielle} are then straightforward by definition of $\exp(Y)$.
\end{proof}

\begin{corollaire}\label{approximation du produit}
Let $(U, (X_i)_{i \in I}, k)$ be a generating family such that all the $X_i$'s are compactly supported and $\mathcal{U}\subseteq U \times \R_+ \times \R^I$ the open neighbourhood of $U \times \{0\} \times \R^I$ given by \Cref{lemme lift champs de vecteurs Exp}. For $(p,t) \in U \times \R_+$, set $V_{p,t} = \{v \in \R^I \; |\; (p,t,v) \in \mathcal{U} \}$. 

There exists a family of smooth maps $\phi_{p,t}: \R^I \times V_{p,t} \rightarrow \R^I$, depending smoothly on $(p,t) \in U \times \R_+$\footnote{In the sense that $\mathcal{U} \times \R^I \mapsto \R^I$, $((p,t,v),u) \mapsto \phi_{p,t}(u,v)$ is smooth.}, satisfying the relations
\begin{align}
\label{eq phi composition t non nul}\exp(\natural_t(\phi_{p,t}(u,v)))p &= \exp(\natural_t(u))\exp(\natural_t(v))p && \text{if} \; t\neq 0\\
\label{eq phi composition t nul}e^{\natural_{p,0}(\phi_{p,0}(u,v))} &= e^{\natural_{p,0}(u)} e^{\natural_{p,0}(v)} && \text{else}
\end{align}
for all $(p,t, v) \in \mathcal{U}$ and $u \in \R^I $. Moreover one can assume that, for any fixed $p \in U$ and any $u \in \R^I$, the map $\R^I \rightarrow \R^I$, $v \mapsto \phi_{p,0}(u, v)$ is an isomorphism.

\end{corollaire}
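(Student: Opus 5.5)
The idea is to obtain $\phi_{p,t}(u,\cdot)$ as the time-one flow of the lifted vector field $\tilde Z$ of \Cref{lemme lift champs de vecteurs Exp}, applied with the constant coefficients $c_i \equiv u_i$. Fix $u = \sum_i u_i e_i \in \R^I$ and consider the section $Z_u = \sum_i u_i t^{k(i)} X_i = \natural_t(u)$. This is of the required form $\sum_i c_i t^{k(i)} X_i$ with constant $c_i = u_i$. \Cref{lemme lift champs de vecteurs Exp} then produces a vector field $\tilde Z_u$ on $\mathcal{U}$, tangent to the fibres $\{(p,t)\}\times \R^I$. It depends linearly, hence smoothly, on $u$, since the formula $\tilde Z(p,t,v) = h_{p(1,v),t,v}^{-1} w(p(1,v),t)$ is linear in $w = u$. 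We set
\[
\phi_{p,t}(u,v) \coloneqq \text{fibre component of } \exp(\tilde Z_u)(p,t,v),
\]
and then check the two relations and the smoothness.

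For $t \neq 0$, the key fact is that $\Exp$ intertwines the flow of $\tilde Z_u$ with the flow of the right-equivariant field $Z_u^{G\times\R_+^*}$, by \ref{Z tilde t non nul}. The flow of the latter at time $1$ is left multiplication $\gamma \mapsto \exp(\natural_t(u))\gamma$, by \eqref{eq multiplication groupoide flots}. Hence
\[
\exp(\natural_t(\phi_{p,t}(u,v)))p = \exp(\natural_t(u))\exp(\natural_t(v))p,
\]
which is \eqref{eq phi composition t non nul}. For $t=0$, \ref{Z tilde t nul} says that the curve $\tau \mapsto \natural_{p,0}(v(\tau))$, where $v(\tau)$ is the flow line, solves
\[
\frac{d}{d\tau}\, \natural_{p,0}(v(\tau)) = \left.\frac{d}{ds}\right|_{s=0}\BCH\bigl(s\,\natural_{p,0}(u), \natural_{p,0}(v(\tau))\bigr).
\]
This is precisely the ODE satisfied by $\BCH(\tau\,\natural_{p,0}(u), \natural_{p,0}(v))$, because $e^{\tau u'}e^{v'}$ is the flow of the right-invariant field generated by $u'$. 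Uniqueness of ODE solutions in $\gr(\module F)_p$ then gives $e^{\natural_{p,0}(v(1))} = e^{\natural_{p,0}(u)}e^{\natural_{p,0}(v)}$, which is \eqref{eq phi composition t nul}.

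\textbf{Main obstacle: completeness and the domain.} The maps $\phi_{p,t}(u,\cdot)$ must be defined on all of $V_{p,t}$ and, at $t=0$, be diffeomorphisms of $\R^I$. At $t=0$, the vector field $v \mapsto h_{p,0,v}^{-1}u$ has polynomial coefficients. Indeed, $h_{p,0,v} = (e^{A}-1)/A$ with $A = S_{p,0}(v,\cdot)$ nilpotent, so $h_{p,0,v}$ is unipotent-like with polynomial inverse, and the field is triangular with respect to the degree grading. Concretely, the component of degree $l$ of $\tilde Z$ is $u_l$ plus a polynomial in components of lower degree. The flow can therefore be integrated degree by degree, and it is complete and polynomial. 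Its inverse is the flow of $\tilde Z_{-u}$, so $v \mapsto \phi_{p,0}(u,v)$ is an isomorphism (a polynomial diffeomorphism) of $\R^I$.

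For $t>0$, the flow might leave $\mathcal{U}$ before time $1$. To handle this, I would first shrink $\mathcal{U}$ to an open set $\mathcal{U}'$ containing $U\times\{0\}\times\R^I$ on which the time-one flow of $\tilde Z_u$ is defined for all $u$ in compact sets. This uses the continuous dependence of flows on parameters together with completeness at $t=0$; compactness is obtained by exhausting $\R^I \times \R^I$ by balls. Alternatively, I would multiply $\tilde Z_u$ by a cutoff equal to $1$ near $t=0$ and define $\phi_{p,t}$ arbitrarily (smoothly) elsewhere, relying on \Cref{remarque restriction topologie} that only a neighbourhood of $t=0$ matters. This is the step I would state carefully, possibly replacing $\mathcal{U}$ in the statement by such a smaller neighbourhood. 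Smooth dependence on $(p,t,v,u)$ then follows from smooth dependence of ODE flows on parameters.
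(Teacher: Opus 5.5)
Your proposal is correct and follows the paper's proof: the paper defines $\phi_{p,t}(u,v)$ by $(p,t,\phi_{p,t}(u,v)) = \exp(\tilde Z_u)(p,t,v)$ with $Z_u=\sum_i u_i t^{k(i)}X_i$ and gives no further detail, and your checks of the two relations, together with your argument that at $t=0$ the field $\tilde Z_u$ is triangular and polynomial (hence complete, with $v\mapsto\phi_{p,0}(u,v)$ invertible), supply what that one-line argument leaves implicit. The domain problem you raise for $t>0$ is real and the paper does not address it; the fix is to take as domain the open set of $(p,t,u,v)$ on which the time-one flow exists, which contains $U\times\{0\}\times\R^I\times\R^I$ and, as far as I can check, suffices for the later uses near $t=0$ — but it has to be a neighbourhood in the joint variables $(p,t,u,v)$, since in general no neighbourhood in $(p,t,v)$ alone works for every $u\in\R^I$ (already for $G=SO(3)$ over a point, with $X_1,X_2,X_3$ a basis of its Lie algebra, there is no continuous $\phi_{t}$ on $\R^3\times V_{t}$ satisfying the relation once $t>0$ is small enough that $V_t$ contains a neighbourhood of $0$).
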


\begin{proof}

For $u = \sum_i c_i e_i \in \R^I$ ($(c_i)_{i \in I}$ being constants), set $Z_u = \sum_i c_i t^{k(i)} X_i$ and define $\phi_{p,t}(u,v)$ by the relation
\begin{equation}
(p, t, \phi_{p,t}(u,v)) = \exp(\tilde Z_u)(p,t,v)
\end{equation}
where $\tilde Z_u$ is the lift given by \Cref{lemme lift champs de vecteurs Exp}.
\end{proof}

\subsection{Proof of local compactness}

\begin{lemme}\label{lemme ouvert independant base}
A set $\mathcal{V} \subseteq \deformation{G}{\module F}$ is open if and only if:
\begin{enuminthm}
\item $\mathcal{V} \cap (G \times \R_+^*)$ is open and
\item for all $a \in \mathcal{V} \cap(\Osc_{\module F}(G) \times \{0\})$, there exists a generating family $(U, (X_i)_{i \in I}, k)$ such that $s(a) \in \beta^{-1}(U \times \{0\})$ and $(\Exp^X)^{-1}(\mathcal{V})$ is open.
\end{enuminthm}
\end{lemme}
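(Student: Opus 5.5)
The forward direction is immediate from the definition of the final topology. If $\mathcal V$ is open, then $\mathcal V \cap (G \times \R_+^*)$ is open because the inclusion $G \times \R_+^* \hookrightarrow \deformation{G}{\module F}$ is continuous. Also $(\Exp^{\mathbb X})^{-1}(\mathcal V)$ is open for \emph{every} generating family, because each $\Exp^{\mathbb X}$ is continuous. So the content lies in the converse. There we only know that, near each point $a$ of $\mathcal V$ at level $0$, \emph{one} chart $\Exp^{\mathbb X}$ pulls $\mathcal V$ back to an open set. We must deduce that $(\Exp^{\tilde{\mathbb X}})^{-1}(\mathcal V)$ is open for \emph{every} generating family $\tilde{\mathbb X}$.

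The plan is to compare two families $\mathbb X = (U,(X_i)_{i\in I},k)$ and $\tilde{\mathbb X} = (\tilde U,(\tilde X_j)_{j \in J},\tilde k)$ by a continuous change of coordinates that intertwines $\Exp^{\mathbb X}$ and $\Exp^{\tilde{\mathbb X}}$ locally. Fix a point $(b,\tilde v)$ in $(\Exp^{\tilde{\mathbb X}})^{-1}(\mathcal V)$. If $b$ lies in $\beta^{-1}(\tilde U\times\R_+^*)$, openness near $(b,\tilde v)$ follows from the first hypothesis and continuity of $\Exp^{\tilde{\mathbb X}}$ on the $t>0$ part (\Cref{remarque restriction topologie}). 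So assume $b=(p,L,0)$ and $a=\Exp^{\tilde{\mathbb X}}(b,\tilde v)\in\mathcal V$. Choose the family $\mathbb X$ given by the hypothesis for this $a$; since $s(a)=b$, we get $p\in U\cap\tilde U$. After multiplying by cutoffs we may assume all vectors are compactly supported, which \Cref{remarque restriction topologie} allows.

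The key step is to build, on a neighbourhood of $(b,\tilde v)$ in $\beta^{-1}((U\cap \tilde U)\times\R_+)\times\R^J$, a continuous map $\Psi(c,\tilde w)=(c,\psi_c(\tilde w))$ such that $\Exp^{\mathbb X}\circ\Psi=\Exp^{\tilde{\mathbb X}}$. Then $(\Exp^{\tilde{\mathbb X}})^{-1}(\mathcal V)=\Psi^{-1}((\Exp^{\mathbb X})^{-1}(\mathcal V))$ locally, which is open.

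To construct $\Psi$, take a transition family $T_{q,t}$ as in \Cref{lemme transitions}, so that $\natural^{\tilde{\mathbb X}}_{q,t}=\natural^{\mathbb X}_{q,t}\circ T_{q,t}$. The naive choice $\psi(\tilde w)=T_{q,t}\tilde w$ works exactly at $t=0$, but fails for $t>0$. The reason is that $\exp(\natural^{\tilde{\mathbb X}}_t(\tilde w))$ and $\exp(\natural^{\mathbb X}_t(T\tilde w))$ are flows of different sections, which coincide only at the point $q$ and not along the orbit. I would instead realize $\Exp^{\tilde{\mathbb X}}(c,\tilde w)$ as $\exp(Y)$ applied to the unit $c$, where $Y=\sum_j \tilde w_j\,\theta_{\tilde k(j)}(\tilde X_j)$ is written in the $\mathbb X$-family as $\sum_i (a_{ij}\circ\beta)\,\theta_{k(i)}(X_i)$. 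The flow of its lift $\hat Z$ given by \Cref{lemme lift champs de vecteurs Exp} and diagram \eqref{eq relevement champs de vecteurs}, started at $(c,0)$, defines $\psi_c(\tilde w)$. Here the coefficients $\tilde w_j\, t^{\tilde k(j)-k(i)}a_{ij}$ depend on the parameter $\tilde w$, and \Cref{lemme lift champs de vecteurs Exp} produces $\tilde Z$ depending smoothly on such parameters. Commutativity of \eqref{eq relevement champs de vecteurs} and \Cref{prop flot s fibres} then give $\Exp^{\mathbb X}(c,\psi_c(\tilde w))=\exp(Y)(c)=\Exp^{\tilde{\mathbb X}}(c,\tilde w)$. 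At $t=0$ this uses \ref{description extension en zero}, with $e^{\tilde Y(p)}\bmod L$ and $\tilde Y(p)=\natural^{\tilde{\mathbb X}}_{p,0}(\tilde w)$.

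The main obstacle is domain control. The flow of $\hat Z$ must stay inside $\mathcal U'$ up to time $1$ for $(c,\tilde w)$ near $(b,\tilde v)$, and $\tilde v$ need not be small. At $t=0$ the flow is globally defined, because $\tilde Z$ is polynomial, given by the BCH derivative on a nilpotent algebra. By openness of $\mathcal U$ and continuity of flows with respect to initial data and parameters, the time-$1$ flow is then defined and continuous on a neighbourhood of $\beta^{-1}(p,0)\times\{\tilde v\}$. That is exactly the local statement required, so this neighbourhood argument closes the proof.
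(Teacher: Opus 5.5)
Your proof is correct, but it builds the change of charts by a different route from the paper.

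\textbf{What the paper does.} It concatenates the two families into $\mathbb Y$ and takes the product maps $\phi_{p,t}$ of \Cref{approximation du produit} together with a transition family $T_{p,t}$. It then solves the implicit equation $T_{p,t}(\phi_{p,t}(-w,v))=0$ for $w=\psi_{p,t}(v)$. To do so it imports Mohsen's inverse-function argument, which relies on $\R_+^*$-equivariance of $\mathcal U$, $\phi$ and $T$. The outcome is a smooth $\tilde\Psi$ with $\Exp^{\tilde{\mathbb X}}\circ\tilde\Psi=\Exp^{\mathbb X}$, defined on a uniform neighbourhood of the whole slice $(U\cap\tilde U)\times\{0\}\times\R^I$.

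\textbf{What you do.} You compute the $\mathbb X$-coordinates of $\exp(\natural^{\tilde{\mathbb X}}_t(\tilde w))p$ directly. You integrate the lift of \Cref{lemme lift champs de vecteurs Exp} for the field $\natural^{\tilde{\mathbb X}}_t(\tilde w)$, rewritten in the $\mathbb X$-family with $p$-dependent coefficients, along the path $s\mapsto\exp(s\natural^{\tilde{\mathbb X}}_t(\tilde w))p$. Those variable coefficients are exactly what repairs the failure of the naive choice $T_{q,t}\tilde w$, which you correctly identified.

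\textbf{What each route buys.} Your route uses only \Cref{lemme lift champs de vecteurs Exp} and \Cref{prop flot s fibres}. It avoids the concatenated family, the implicit function theorem and the equivariance input. It also gives $\psi$ smooth in $(p,t,\tilde w)$, so it would serve equally for the later lemma on smooth functions. The price is that your domain is controlled only near each point. That suffices for openness, whereas the paper gets a uniform neighbourhood and a genuine local change of charts.

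\textbf{One step to tighten.} Polynomiality of $\tilde Z(p,0,\cdot)$ does not by itself exclude blow-up. Moreover, property (iii) of the lemma only controls $\natural_{p,0}(\tilde Z)$, not the components in $\ker\natural_{p,0}$. The actual reason the $t=0$ flow is global is as follows. The operator $h_{p,0,v}=\sum_n S_{p,0}(v,\cdot)^n/(n+1)!$ is unipotent, and $S_{p,0}(v,\cdot)$ strictly raises the degree $k$. Hence the degree-$l$ component of $\tilde Z(p,0,v)=h_{p,0,v}^{-1}w(p,0)$ depends only on the components of $v$ of degree $<l$. The ODE therefore integrates degree by degree, with solutions polynomial in $s$. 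Openness of the flow domain then gives your neighbourhood of $\beta^{-1}(p,0)\times\{\tilde v\}$, as you say.
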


\begin{proof}

Let $\mathbb X = (U, (X_i)_{i \in I}, k)$, $\tilde{\mathbb X} = (\tilde U, (\tilde X_j)_{j \in J}, \tilde k)$ be two generating families such that all the $X_i$'s and $\tilde X_j$'s are compactly supported.

We will build $\mathcal{W} \subseteq (U \cap \tilde U) \times \R_+ \times \R^I$ an open neighbourhood of $(U \cap \tilde U) \times \{0\} \times \R^I$ and maps $\Psi$, $\tilde \Psi$ such that the diagram 
\begin{equation}
\begin{tikzcd}
\mathcal{W} \ar[rr, "\Psi"] & & (U \cap \tilde U) \times \R_+ \times \R^J\\
\mathcal{W}' \ar[rr, "\tilde \Psi"] \ar[rd, "\Exp^{\mathbb X}"'] \ar[u, "\beta"]&& \beta^{-1}((U \cap \tilde U) \times \R_+) \times \R^J \ar[ld, "\Exp^{\tilde{\mathbb X} }"] \ar[u, "\beta"] \\
& \deformation{G}{\module F}&
\end{tikzcd}
\end{equation}
commutes, where $\mathcal{W}' = \{(a,v) \in \beta^{-1}((U \cap \tilde U) \times \R_+) \times \R^I \; |\; (\beta(a), v) \in \mathcal{W} \}$. The continuity of $\tilde \Psi$ will end the proof.

First apply \Cref{lemme transitions} to $\mathbb X$ and $\tilde{\mathbb X}$ to get the family of maps $T_{p,t} : \R^I\rightarrow \R^J$, and linearly extend these maps as $T_{p,t} : \R^I \oplus \R^J \rightarrow \R^J$ by the identity on $\R^J$. Then consider the generating family $\mathbb Y = (U \cap \tilde U, (X_i)_{i \in I} \sqcup (\tilde X_j)_{i \in J}, k')$ with $k'_{|I} = k$ and $k'_{|J} = \tilde k$. Apply \Cref{approximation du produit} to $\mathbb Y$ to get an open set $\mathcal{U} \subseteq (U \cap \tilde U) \times \R_+ \times ( \R^I \oplus \R^J)$ and maps $\phi_{p,t} : (\R^I \oplus \R^J) \times V_{p,t} \rightarrow \R^I$, with $V_{p,t} \subseteq \R^I \oplus \R^J$. Set $\mathcal{W}_0 =\{(p,t,v) \in (U \cap \tilde U) \times \R_+ \times \R^I \; |\; (p,t, v \oplus 0) \in \mathcal{U} \}$. For $(p,t,v) \in \mathcal{W}_0$, $w \in \R^J$ and $a \in \beta^{-1}(\{(p,t)\})$, one gets from the properties of $\phi_{p,t}$ and $T_{p,t}$ that 
\begin{equation}
T_{p,t}(\phi_{p,t}(-w, v)) = 0 \Rightarrow \Exp^{\tilde{\mathbb X} }(a,w) = \Exp^{\mathbb X}(a, v).
\end{equation} 

Thus, writing $\Psi(p,t,v) = (p,t,\psi_{p,t}(v))$, we want $\psi_{p,t}(v) \in \R^J$ to be a solution of the equation $T_{p,t}(\phi_{p,t}(- \psi_{p,t}(v), v)) = 0$. Consider the smooth map 
\begin{equation}
\begin{aligned}
\mathcal{W}_0 \times \R^J & \overset{F}{\rightarrow} \mathcal{W}_0 \times \R^J\\
F(p,t,v, w) &=(p,t,v, T_{p,t}(\phi_{p,t}(-w, v))).
\end{aligned}
\end{equation}
By the same argument as in the proof of \cite[Lemma 3.3.2]{Mohsen26}, one can find $\mathcal{D} \subseteq \mathcal{W}_0 \times \R^J$ an open neighbourhood of $(U \cap \tilde U) \times \{0\} \times \R^I \times \R^J$ such that $F: \mathcal{D} \rightarrow F(\mathcal{D})$ is a diffeomorphism and such that $F(\mathcal{D})$ contains $(U \cap \tilde U) \times \{0\} \times \{0\}$. One can thus set $\mathcal{W} = \{(p,t,v) \in \mathcal{W}_0\; |\; (p,t,v,0) \in F(\mathcal{D})\}$ and define $\Psi$ by the relation 
\begin{equation}
(p,t, \psi_{p,t}(v), v) = F^{-1}(p,t,v, 0).
\end{equation}

Note that the key argument in Mohsen's proof is the $\R_+^*$ stability of the domain $\mathcal{U}$ and the $\R_+^*$ equivariance of the family of maps $\phi_{p,t}$, for the suitable action. Our construction is essentially the same as Mohsen's and we can also assume this equivariance. More precisely it is a consequence of the fact that the families of maps $T_{p,t}$ and $S_{p,t}$ can be chosen $\R_+^*$-equivariant for the suitable $\R_+^*$ actions, see the proof of \Cref{lemme relevement crochet}.
\end{proof}

\begin{corollaire}
A function $f$ on $\deformation{G}{\module F}$ is continuous if and only if:
\begin{enuminthm}
\item $f_{|G \times \R_+^*}$ is continuous;
\item\label{pullback exponentielle lisse} for every point $p \in M$, there is a generating family $\mathbb X = (U, (X_i)_{i \in I}, k)$ such that $U$ contains $p$ and the map $f \circ \Exp^{\mathbb X}$ is continuous.
\end{enuminthm}
\end{corollaire}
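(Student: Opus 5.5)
This is a direct consequence of the definition of the topology of $\deformation{G}{\module F}$ (\Cref{def groupoid de deformation}) combined with \Cref{lemme ouvert independant base}. Recall that the topology is the final topology for the inclusion of $G \times \R_+^*$ and the family of all maps $\Exp^{\mathbb X}$. Hence $f$ is continuous if and only if $f_{|G\times\R_+^*}$ is continuous and $f\circ\Exp^{\mathbb X}$ is continuous for \emph{every} generating family $\mathbb X$ with compactly supported sections. The "only if" direction is therefore immediate: condition \ref{pullback exponentielle lisse} only asks for one such family around each point, and one always exists. Indeed, take any generating family around $p$ and multiply its sections by a cutoff equal to $1$ near $p$. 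Shrinking $U$ to the region where the cutoff is $1$ keeps it generating.

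For the converse, we prove that $f^{-1}(\mathcal O)$ is open for every open $\mathcal O \subseteq \C$, using the criterion of \Cref{lemme ouvert independant base}. Set $\mathcal V = f^{-1}(\mathcal O)$.

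First, $\mathcal V\cap (G\times\R_+^*) = (f_{|G\times\R_+^*})^{-1}(\mathcal O)$ is open in $G\times\R_+^*$. This uses that the subspace topology on $G\times\R_+^*$ is the usual one (\Cref{remarque restriction topologie}).

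Second, let $a\in \mathcal V\cap(\Osc_{\module F}(G)\times\{0\})$, and write $s(a)=(p,L,0)$ with $p\in \units G$. By hypothesis there is a generating family $\mathbb X=(U,(X_i),k)$ with $p\in U$ such that $f\circ\Exp^{\mathbb X}$ is continuous. Then $s(a)\in\beta^{-1}(U\times\{0\})$, and $(\Exp^{\mathbb X})^{-1}(\mathcal V) = (f\circ \Exp^{\mathbb X})^{-1}(\mathcal O)$ is open. (If needed, we first cut the $X_i$ off so they are compactly supported, as in the first paragraph.) Both conditions of \Cref{lemme ouvert independant base} hold, so $\mathcal V$ is open and $f$ is continuous.

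The only real content is the independence of the choice of generating family. That is exactly what \Cref{lemme ouvert independant base} provides, via the local comparison maps $\tilde\Psi$ intertwining $\Exp^{\mathbb X}$ and $\Exp^{\tilde{\mathbb X}}$ near $t=0$. Since we assume that lemma, there is no remaining obstacle. The one point to check is that condition \ref{pullback exponentielle lisse} is stated pointwise on $\units G$, whereas the lemma is stated pointwise on $\Osc_{\module F}(G)\times\{0\}$. These match because the lemma only needs the family to contain $\beta(s(a))$.
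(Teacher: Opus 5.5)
Your proof is correct and follows the paper's approach: the paper states this corollary as an immediate consequence of \Cref{lemme ouvert independant base}, applied to $\mathcal V = f^{-1}(\mathcal O)$. The forward direction comes from the final-topology definition together with the existence of compactly supported generating families near each point. One small remark: your parenthetical about cutting off the $X_i$ in the converse is unnecessary, and as phrased not quite legitimate, because the hypothesis concerns the given family and says nothing about a modified one; since $\Exp^{\mathbb X}$ is only defined for compactly supported families, the hypothesis already presupposes compact support.
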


We finally need the so-called \emph{period bounding lemma}; it will be the key argument to show the Hausdorff property of $\deformation{G}{\module F}$. See \cite[Lemma 3.3.2]{Mohsen26} for a proof.

\begin{lemme}\label{periodic bounding}(period bounding lemma)
Let $(U, X_i)_{i \in I}, k)$ be a generating family with all $X_i's$ being compactly supported. There exists $W \subseteq \R^I$ an open neighbourhood of $0$ such that, for all $v \in W$ and all $p \in U$:
\begin{equation}
\exp(\natural_1(v))p = p \Leftrightarrow \natural_{p,1}(v) = 0.
\end{equation}
\end{lemme}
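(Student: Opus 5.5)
The plan is to prove the nontrivial direction ($\Rightarrow$) by contradiction; the converse is immediate, since $\natural_{p,1}(v)=0$ makes $\natural_1(v)$ a section vanishing at $p$, and the flow of its right equivariant extension fixes the unit $p$. So suppose there are sequences $v_n \to 0$ in $\R^I$ and $p_n \in U$ with $\exp(\natural_1(v_n))p_n = p_n$ but $\natural_{p_n,1}(v_n) \neq 0$. Since the $X_i$ are compactly supported, $\natural_1(v)$ vanishes outside a fixed compact set $K$, so we may assume $p_n\in K$ and, after extraction, $p_n \to p$.

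First I would reduce to a nondegenerate situation. Near $p$, choose $J\subset I$ such that $(X_j(p))_{j\in J}$ is a basis of $\bundle A_p G$. Then for $q$ near $p$ and any $v$ there is a unique $v'\in \Span(e_j)_{j\in J}$ with $\natural_{q,1}(v') = \natural_{q,1}(v)$. The section $\natural_1(v)-\natural_1(v')$ vanishes at $q$, and the relevant object is the flow of sections vanishing at $q$. I would use the composition maps $\phi_{q,1}$ of \Cref{approximation du produit} (at $t=1$, where they are defined for small $v$ via \Cref{lemme lift champs de vecteurs Exp}) to rewrite $\exp(\natural_1(v))q = \exp(\natural_1(v'))\exp(\natural_1(u))q$. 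Here $u$ is small and $\natural_{q,1}(u)$ is small compared to $|\natural_{q,1}(v)|$, so that $\exp(\natural_1(u))q$ is an isotropy element close to $q$ in the isotropy group.

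Second comes the key estimate. Because the $s$-fibres are manifolds and the unit space is an embedded submanifold, the map $w\mapsto \exp(\sum_{j\in J} w_j X_j)q$ is a local diffeomorphism from a neighbourhood of $0$ in $\R^J$ onto a neighbourhood of $q$ in $G_q$, uniformly in $q$ near $p$. So $\exp(\natural_1(v))q = q$ forces the $J$-component to compensate the isotropy contribution. The isotropy elements produced by flows of sections vanishing at $q$ are of size $O(|v|\,|\natural_{q,1}(v)|)$; this follows by differentiating the flow in $v$, since a section vanishing at $q$ has flow at $q$ whose first-order term is governed by its linearisation, scaled by the coefficients. Meanwhile the $J$-component moves $q$ by an amount comparable to $|\natural_{q,1}(v)|$. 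For $|v|$ small this gives $|\natural_{q,1}(v)| \le C|v|\,|\natural_{q,1}(v)|$, hence $\natural_{q,1}(v)=0$. Then $W$ is a small ball, and compactness of $K$ makes $C$ uniform.

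The main obstacle is this quantitative comparison, namely showing that the displacement $\exp(\natural_1(v))q$ is bounded below by $c\,|\natural_{q,1}(v)|$ uniformly in $q$. The difficulty is that the generating family is redundant and the rank of $(X_i(q))$ is not controlled, which is the whole point of the singular setting. To handle it I would first try the ODE approach: write $\gamma(\tau)=\exp(\tau\natural_1(v))q$ and compare $\frac{d}{d\tau}\gamma$ with the constant vector $X^G$ at $\gamma(\tau)$. Along the path the vector field changes by $O(|v|\cdot \mathrm{dist})$, and the distance is itself $O(|\natural_{q,1}(v)|)$ by a Gronwall argument applied to the section $\natural_1(v)$. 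This gives $\mathrm{dist}(\gamma(1),q) \ge |\natural_{q,1}(v)|(1-C|v|)$ in a chart of $G$ near the units. Working in a chart of $G$ rather than through the $\phi$ maps might make the first reduction unnecessary.
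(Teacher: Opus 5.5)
The paper does not prove this lemma; it only cites \cite[Lemma 3.3.2]{Mohsen26}, so your proposal has to stand on its own. It does, but only through your last paragraph, which is a complete proof and should be your main line.

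The lemma is a uniform no-small-periods estimate for the single vector field $X^G$, $X=\natural_1(v)$, near the compact set $K$ of units outside which every $X_i$ vanishes. Neither the generating property of $(X_i)_{i\in I}$ nor the rank drop of $(X_i(q))_{i\in I}$ plays any role, so the ``main obstacle'' you name is not one.
\begin{itemize}
\item Take a chart $\psi$ of $G$ around $q\in K$; finitely many such charts cover $K$. The curve $x(\tau)=\psi(\exp(\tau X)q)$ solves $\dot x=\psi_*X^G(x)$.
\item This field is $L$-Lipschitz with $L\le C|v|$, and $\dot x(0)=d\psi(X(q))$ vanishes if and only if $\natural_{q,1}(v)=0$.
\item Gronwall gives $|x(\tau)-x(0)|\le|\dot x(0)|\,\tau e^{L\tau}$. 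For $|v|$ small this also keeps the curve inside the chart.
\item Writing $x(1)-x(0)=\dot x(0)+\int_0^1\bigl(\dot x(\tau)-\dot x(0)\bigr)\,d\tau$ then yields $|x(1)-x(0)|\ge|\dot x(0)|\,(1-Le^{L})$.
\end{itemize}
So $W$ can be an explicit small ball, and no contradiction argument is needed. Your proof of $\Leftarrow$ and your reduction to $p\in K$ are fine.

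Your first route, by contrast, is wrong as written, and you are better off dropping it than repairing it.
\begin{itemize}
\item The element $\exp(\natural_1(u))q$ has range $\exp(\rho(\natural_1(u)))q$, which in general is not $q$, so it is not an isotropy element.
\item When $\natural_{q,1}(u)=0$, the unit $q$ is a zero of $\natural_1(u)^G$. Flows of sections vanishing at $q$ therefore return exactly $q$ and contribute nothing.
\item The object that is genuinely $O(|v|\,|\natural_{q,1}(v)|)$ is $g=\exp(-\natural_1(v'))\exp(\natural_1(v))q$. The compensation must then be read at $r(g)$, namely $\exp(\natural_1(v'))\,r(g)=g^{-1}$.
\item That route also picks a basis $(X_j(p))_{j\in J}$ at a limit point $p$ that need not lie in $U$.
\item It also uses maps $\phi_{q,1}$ at $t=1$, whereas \Cref{approximation du produit} only guarantees them near $t=0$.
\end{itemize}
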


\begin{proof}[Proof of \Cref{theoreme topologie groupoide}] Let $\mathbb X = (U, (X_i)_{i \in I}, k)$ be a generating family with all $X_i's$ compactly supported, $\mathcal{U}$ the open set given by \Cref{lemme lift champs de vecteurs Exp} and $\phi_{p,t}: \R^I \times V_{p,t} \rightarrow \R^I$ a family as given by \Cref{approximation du produit}. Moreover, for $t \in \R_+$, denote $\tilde \alpha_t$ the endomorphism of $\R^I$ given by $\tilde \alpha_t(e_i) = t^{k(i)} e_i$.

\begin{itemize}
\item \emph{Openness of $\Exp$ around $t=0$: } We claim that we can build an open set $\mathcal{V} \subset \beta^{-1}(U \times \R_+) \times \R^I$, containing $\beta^{-1}(U \times \{0\}) \times \R^I$, such that the restriction $\Exp_{|\mathcal{V}}: \mathcal{V} \rightarrow \deformation{G}{\module F}$ is open. Together with the Hausdorff property, proven below, it will imply local compactness of $\deformation{G}{\module F}$ by \Cref{remarque restriction topologie}, see \cite[Chap I, \S 10, Prop 10]{BourbakiTG}. 

To build $\mathcal{V}$, start by choosing an open set $\mathcal{V}_1 \subset U \times \R^I$ containing $U \times \{0\}$ and such that the map $\exp_1: \mathcal{V}_1 \rightarrow G$, $(p,v) \mapsto \exp(\natural_1(v))p$ is open. Such a set $\mathcal{V}_1$ always exists since $\exp_1$ is a submersion in a neighbourhood of $U \times \{0\}$. Then set 
\begin{equation}\label{eq U Exp ouverte}
\mathcal{V}  = \{(p,t, v) \in U \times \R_+^* \times \R^I \; |\; (p, \tilde \alpha_t(v)) \in \mathcal{V}_1 \} \sqcup \beta^{-1}(U \times \{0\}) \times \R^I
\end{equation}

The set $\mathcal{V}$ is easily seen to be open; it remains to prove that $\Exp_{|\mathcal{V}}$ is an open map. Let $W \subset \mathcal{V}$ be an open set; by \Cref{lemme ouvert independant base}, it suffices to prove that $\Exp^{-1}(\Exp(W))$ is open, using only the family $\mathbb{X}$. The set $\Exp^{-1}(\Exp(W)) \cap ( \units G \times \R_+^* \times \R^I)$ is open by construction of $\mathcal{V}$. It remains to show that, for any point of the form $(p_0, L_0, 0, v_0) \in W$ and any other vector $v_0' \in \R^I$ such that
\begin{equation}\label{eq egalite exponentielles}
e^{\natural_{p,0}(v_0)} = e^{\natural_{p,0}(v'_0)} \mod L_0,
\end{equation}
there is an open neighbourhood of $(p_0, L_0, 0, v_0')$ contained in $\Exp^{-1}(\Exp(W))$. We claim that there exists $\mathcal{D} \subseteq U  \times \Grass(\R^I) \times \R_+$ an open neighbourhood of $U \times \{0\} \times \Grass(\R^I)$ and a commutative diagram 

\begin{equation}\label{diagramme Exp ouverts}
\begin{tikzcd}[column sep = tiny]
\mathcal{D} \times \R^I \ar[rr, "\tilde \Phi_0"]  && U \times \R_+ \times \Grass(\R^I) \times \R^I  \\
\iota^{-1}(\mathcal{D})\times \R^I \ar[rr, "\Phi_0"] \ar[rd, "\Exp"'] \ar[u, "\iota"]&& \beta^{-1}(U \times \R_+) \times \R^I \ar[ld, "\Exp"] \ar[u, "\iota"'] \\
& \deformation{G}{\module F}&
\end{tikzcd}
\end{equation}
with $\Phi_0(p_0, L_0, 0, v_0') = (p_0, L_0, 0, v_0)$; the set $\Phi_0^{-1}(W)$ will be the desired open neighbourhood of $(p_0, L_0, 0, v_0')$.

Let $w_0 \in \R^I$ be such that $\phi_{p_0, 0}(v_0, w_0) = v_0'$; using \eqref{eq phi composition t nul} and \eqref{eq egalite exponentielles}, one computes 
\begin{equation}
e^{\natural_{p_0, 0}(v_0)} e^{\natural_{p_0, 0}(w_0)} = e^{\natural_{p_0, 0}(v_0')} = e^{\natural_{p_0, 0}(v_0)} \mod L_0
\end{equation}
hence $\natural_{p_0, 0}(w_0) \in L_0$. Let $\Taut \rightarrow \Grass(\R^I)$ be the tautological bundle, ie the vector bundle whose fiber over $L$ is $L$. Fix $L \mapsto w(L) \in L$ a continuous section of $\Taut$ such that $w(\natural_{p,0}^{-1}(L_0)) = w_0$ and set 
\begin{equation}
\mathcal{D} = \{ (p,L,t) \in U \times \R_+ \times \Grass(\R^I) \; |\; (p,t, w(L)) \in \mathcal{U} \}.
\end{equation}
Finally set $\tilde \Phi_0(p,t,L, u) = (p, t,L, \phi_{p,t}(u, w(L)))$; the diagram \eqref{diagramme Exp ouverts} then commutes by construction.

\item \emph{Hausdorff property:} Using that $G \times \R_+^*$ and $\Grass(\R^I)$ are Hausdorff and that the canonical map $\deformation{G}{\module F} \rightarrow \R_+$ is continuous, the only pairs of points that are not trivially separated are $((p_0, g_1 \mod e^{L_0}, 0), (p_0, g_2 \mod e^{L_0}, 0)) \in (\Osc_{\module F}(G) \times \{0\})^2$, with $g_1 \neq g_2 \mod e^{L_0}$. Consider such a pair, assume that $p_0 \in U$ and fix $v_1, v_2 \in \R^I$ such that $g_i = e^{\natural_{p_0,0}(v_i)}$, $i = 1,2$. We want to build open sets $(p_0,L_0,0, v_i) \in \mathcal{V}_i \subseteq \beta^{-1}(U \times \R_+) \times \R^I$, $i = 1,2$, such that $\Exp(\mathcal{V}_1) \cap \Exp(\mathcal{V}_2) = \emptyset$. Up to intersect the sets $\mathcal{V}_i$'s with the open set $\mathcal{V}$ of \eqref{eq U Exp ouverte}, the sets $\Exp(\mathcal{V}_i)$ will be open and separate the points $(p_0, g_i \mod e^{L_0}, 0)$, $i= 1,2$.

Consider the diagram 
\begin{equation}
\begin{tikzcd}
U \times \Grass(\R^I) \times \R_+ \times \R^I \times \R^I \ar[r, "\tilde \Pi"] & U \times \Grass(\R^I) \times \R_+ \times \R^I \ar[d] \\
\beta^{-1}(U \times \R_+) \times \R^I \times \R^I \ar[u, "\iota"] \ar[r, "\Pi"] & U \times \R_+ \times \CoTaut
\end{tikzcd}
\end{equation}
where $\CoTaut \rightarrow \Grass(\R^I)$ denotes the co-tautological bundle, ie the vector bundle whose fiber over $L$ is $\R^I / L$, the right vertical arrow is the canonical projection from $\Grass(\R^I) \times \R^I$ to $\CoTaut$ and
\begin{equation}
\tilde \Pi (p, L, t, u, v) = (p, L, t, \phi_{p,t}(-u, v)).
\end{equation}

Let $W \subseteq \R^I$ be an open neighbourhood of $0$ as given by \Cref{periodic bounding} and set 
\begin{align}
\mathcal{W}_0 & = \left\{ (p,L,t, v) \in U \times \Grass(\R^I)\times \R^I \; |\; \tilde \alpha_t(v) \in W \right\} \\
\mathcal{W}& = (\tilde \Pi \circ \iota)^{-1}(\mathcal{W}_0).
\end{align}

By construction of $W$, for any $(p, t, v) \in U \times \R_+^* \times \R^I$ such that $\tilde \alpha_t(v) \in W$, one has $\exp(\natural_t(v)) p = p \Leftrightarrow v \in \Ker(\natural_{p,t})$. It thus follows from \eqref{eq phi composition t non nul} and \eqref{eq phi composition t nul} that, for any $(a,u,v) \in \mathcal{W}$, $\Exp(a,u) = \Exp(a,v)$ if and only if $\Pi(a,u,v)$ belongs to the zero section of $\CoTaut$. Set
\begin{equation}
\mathcal{D} = \left\{(a,u,v) \in \mathcal{W} \; |\; \Pi(a,u,v) \in U \times \R_+ \times (\CoTaut \setminus \{0\}) \right\}.
\end{equation}

The set $\mathcal{D}$ is open by construction and contains $(p_0, L_0, 0, v_1, v_2)$, hence one can find open sets $(p_0, L_0, 0, v_i) \in \mathcal{V}_i \subseteq \beta^{-1}(U \times \R_+) \times \R^I$, $i= 1,2$, such that $\{(a,u,v) \; |\; (a,u) \in \mathcal{V}_1\; \text{and} \; (a,v) \in \mathcal{V}_2\}$ is contained in $\mathcal{D}$; such open sets satisfy the required conditions by construction of $\mathcal{D}$.

\item \emph{Continuity of the structure maps:} 
\begin{itemize}
\item The continuity of the inverse map $i$ follows by lifting $i$ as $\tilde i: \beta^{-1}(U \times \R_+) \times \R^I \rightarrow \beta^{-1}(U \times \R_+) \times \R^I $ by $\tilde i (a, u) = (\tilde r(a,u), -u)$. The map $\tilde i$ lifts $i$ because of \eqref{eq multiplication groupoide flots}.

\item For the continuity of the source and range maps, lift them as
\begin{equation}
\begin{tikzcd}[column sep = tiny]
\beta^{-1}(U \times \R_+) \times \R^I \ar[rr, "\Exp"] \ar[rd, shift right, "\tilde r"'] \ar[rd, "\tilde s"]& & \deformation{G}{\module F} \ar[ld, "r"] \ar[ld,  shift right, "s"']\\
&\deformation{\units G}{\module F} & 
\end{tikzcd}
\end{equation}
where $\tilde s$ is the first projection and $\tilde r(a, \sum_i \lambda_i e_i) = \exp_\rho(\sum_i \lambda_i \theta_{k(i)}(X_i)) a$, $\theta_k$ being defined by \eqref{eq theta champs de vecteurs} and $\exp_\rho$ by \Cref{extension de l'exponentielle}. The continuity of $\tilde r$ is a consequence of the construction of $\exp_\rho$, see the proof of \Cref{extension de l'exponentielle}.

The source map is open by \Cref{lemme ouvert independant base}, since $\tilde s^{-1}(\tilde s(\mathcal{V}))$ is open for any open set $\mathcal{V} \subseteq \beta^{-1}(U \times \R_+) \times \R^I$, hence so does the range map because of the openness of the inverse and the relation $r = s \circ i$.

\item Since the lift $\tilde s$ of the source map is just the first projection, one identifies
\begin{equation}
\begin{aligned}
&\left( \beta^{-1}(U \times \R_+) \times \R^I \right) \underset{\tilde s \; \tilde r}{\times } \left( \beta^{-1}(U \times \R_+) \times \R^I \right) \\
 \simeq & \left\{ (a,u, v) \in \beta^{-1}(U \times \R_+) \times \R^I \times \R^I \; |\; \tilde r (a,u) \in \beta^{-1}(U \times \R_+) \right\}
\end{aligned}
\end{equation}
Denote by $\mathcal{P}$ this open set. For the continuity of the multiplication map $m$, it suffices to lift $m$ as
\begin{equation}
\begin{tikzcd}
\mathcal{P}' \ar[d, "\Exp \underset{\tilde s \; \tilde r}{\times } \Exp"'] \ar[r, "\tilde m"]& \beta^{-1}(U \times \R_+) \times \R^I \ar[d, "\Exp"] \\
\deformation{G}{\module F}^{(2)} \ar[r, "m"] & \deformation{G}{\module F}
\end{tikzcd}
\end{equation}
where $\tilde m (a, u, v) = \phi_{\beta(a)}(u,v)$ , $\mathcal{P}' = \{ (a, u, v) \in  \mathcal{P} \; | \; (\beta(a), v) \in \mathcal{U} \}$ and $\Exp \underset{\tilde s \; \tilde r}{\times } \Exp (a, u, v) = (\Exp( \tilde r( a, v), u), \Exp(a,v))$; the diagram commutes by construction, using \eqref{eq multiplication groupoide flots}. Note that one should a priori lift $m$ to any fibered product between any pair of generating families, but \Cref{lemme ouvert independant base} allows to restricts to the above case, since any composable pair $(\gamma, \gamma') \in (\Osc_{\module F}(G) \times \{0\})^{(2)}$ with $s(\gamma') \in \beta^{-1}(U \times \{0\})$ is of the form $\Exp \underset{\tilde s \; \tilde r}{\times } \Exp (a,u,v)$ with $(a,u,v) \in \mathcal{P}'$.
\end{itemize}

\end{itemize}

\end{proof}

\subsection{Smooth structure}\label{section structure lisse}

\emph{\textbf{As for $\deformation{\units G}{\module F}$, there is no canonical structure of smooth manifold on $\deformation{G}{\module F}$.}} Nevertheless, we can still define a class of "smooth functions" as for the unit space, as follows.

\begin{definition}\label{def fonctions lisses groupoide}
Let $f$ be a continuous function on $\deformation{G}{\module F}$. We say that $f$ is \textbf{smooth} if:
\begin{enuminthm}
\item\label{fonction lisse t non nul} $f_{|G \times \R_+^*}$ is smooth;
\item\label{pullback exponentielle lisse} the map $f \circ \Exp$ belongs to $\ci(\beta^{-1}(U \times \R_+)\times \R^I)$ (in the sense of \Cref{def fonctions lisses unites}) for any generating family $(U, (X_i)_{i \in I}, k)$.
\end{enuminthm}

We denote by $\ci(\deformation{G}{\module F})$ the set of (complex valued) smooth functions. More generally, for $N$ a smooth manifold, we define $\ci(\deformation{M}{\module F} \times N)$ by modifying \ref{fonction lisse t non nul} and \ref{pullback exponentielle lisse} in the obvious way.
\end{definition}

Using smoothness of the map $\Psi$ in the proof of \Cref{lemme ouvert independant base}, one shows the following.

\begin{lemme}
Let $f$ be a continuous (complex valued) function on $\deformation{G}{\module F}$. If:
\begin{enuminthm}
\item $f_{|G \times \R_+^*}$ is smooth,
\item\label{pullback exponentielle lisse} for every point $p \in M$, there is a generating family $\mathbb X = (U, (X_i)_{i \in I}, k)$ such that $U$ contains $p$ and the map $f \circ \Exp^{\mathbb X}$ belongs to $\ci(\beta^{-1}(U \times \R_+)\times \R^I)$,
\end{enuminthm}
then $f$ is smooth.
\end{lemme}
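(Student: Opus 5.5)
The plan is to reduce everything to the change-of-generating-family diagram built in the proof of \Cref{lemme ouvert independant base}, and to use that the map $\tilde \Psi$ appearing there is smooth in the sense of \Cref{def fonctions lisses unites}. Fix an arbitrary generating family $\tilde{\mathbb X} = (\tilde U, (\tilde X_j)_{j \in J}, \tilde k)$ with compactly supported $\tilde X_j$. Condition \ref{fonction lisse t non nul} of \Cref{def fonctions lisses groupoide} holds by assumption. It therefore suffices to show that $f \circ \Exp^{\tilde{\mathbb X}}$ lies in $\ci(\beta^{-1}(\tilde U \times \R_+) \times \R^J)$. Smoothness there is local on $\tilde U$, so by the lemma following \Cref{def fonctions lisses unites} (in its version with a manifold factor $N = \R^J$) we may work near a fixed $p \in \tilde U$. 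Away from $t = 0$ the composite is smooth, because $\Exp^{\tilde{\mathbb X}}$ is a smooth map into $G \times \R_+^*$ and $f$ is smooth there.

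Near $\beta^{-1}(\{p\} \times \{0\}) \times \R^J$ I pick the family $\mathbb X = (U, (X_i), k)$ given by hypothesis \ref{pullback exponentielle lisse} with $p \in U$. I then run the construction of \Cref{lemme ouvert independant base} with the roles reversed. This gives an open $\mathcal W \subseteq (U \cap \tilde U) \times \R_+ \times \R^J$ containing $(U \cap \tilde U) \times \{0\} \times \R^J$, together with a smooth map $\Psi(p,t,w) = (p,t,\psi_{p,t}(w))$ obtained by the inverse function theorem, and its lift $\tilde \Psi(a,w) = (a, \psi_{\beta(a)}(w))$ satisfying $\Exp^{\mathbb X} \circ \tilde \Psi = \Exp^{\tilde{\mathbb X}}$ on $\mathcal W'$. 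Hence $f \circ \Exp^{\tilde{\mathbb X}} = (f \circ \Exp^{\mathbb X}) \circ \tilde \Psi$ on $\mathcal W'$.

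To check smoothness of this composite, I choose a smooth $\tilde g$ on $U \times \Grass(\R^I) \times \R_+ \times \R^I$ with $f \circ \Exp^{\mathbb X} = \tilde g \circ (\iota^{\mathbb X} \times \id)$. I also need to express $\iota^{\mathbb X}(a)$ smoothly in terms of $\iota^{\tilde{\mathbb X}}(a)$. For this I use the transition maps from the proof of \Cref{topologie independante de la base adaptee}: with $T$ a transition family one has $\Ker \natural^{\mathbb X}$-type formulas, namely $\iota^{\mathbb X}$-coordinates $= \pi_I(\tilde T_{p,t}^{-1}(L)\cap \dots)$. More cleanly, I plan to use $\phi$ and $\tilde\phi$ from that proof. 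There, $\tilde\phi$ embeds $\Grass(\R^J)$ smoothly into $\Grass(\R^I \oplus \R^J)$ and $\phi$ is a smooth embedding on the $\mathbb X$ side, so $\phi$ admits a smooth local left inverse, as an embedding between manifolds near its image. Composing gives a smooth map $\chi$ defined near $\iota^{\tilde{\mathbb X}}(\beta^{-1}(\{p\}\times\{0\}))$ with $\iota^{\mathbb X} = \chi \circ \iota^{\tilde{\mathbb X}}$. Then $(p,L,t,w) \mapsto \tilde g(\chi(p,L,t), \psi_{p,t}(w))$ is a smooth extension of $f \circ \Exp^{\tilde{\mathbb X}}$, as required. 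The resulting local smooth extensions are then patched along $\tilde U$ with a partition of unity in the $p$ variable, which is harmless because all the maps preserve $p$.

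I expect the main obstacle to be the existence of $\chi$. The subtle point is that $\phi$ is defined on all of $U \times \Grass(\R^I) \times \R_+$ and is smooth, but it need not be an immersion. Here I would check that $L \mapsto T_{p,t}^{-1}(L)$ is in fact injective with injective differential, since $T_{p,t}$ restricts to the identity on $\R^I$, so $T^{-1}(L) \cap \R^I = L$. This gives a smooth left inverse $\Lambda \mapsto \Lambda \cap \R^I$, valid wherever that intersection has the correct dimension, which is an open condition. Near $t = 0$ one also has to verify that $\psi$ lands inside the domain where $\tilde g$ is given, and this is ensured by the construction of $\mathcal W$.
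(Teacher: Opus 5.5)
Your proposal is correct and follows the paper's argument. The paper's proof is a one-liner: it invokes the smoothness of the change-of-family map $\Psi$ from the proof of \Cref{lemme ouvert independant base}, which gives exactly your identity $f\circ\Exp^{\tilde{\mathbb X}} = (f\circ\Exp^{\mathbb X})\circ\tilde\Psi$ near $t=0$, combined with smoothness of $f$ on $G\times\R_+^*$ away from $t=0$.

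The step you single out as the main obstacle, the map $\chi$, is not actually needed. By \Cref{def fonctions lisses unites}, the hypothesis $f\circ\Exp^{\mathbb X}\in\ci(\beta^{-1}(U\times\R_+)\times\R^I)$ already provides a smooth extension $\tilde g'$ relative to every generating family, in particular relative to $\tilde{\mathbb X}$ over $U\cap\tilde U$. Then $(p,L,t,w)\mapsto\tilde g'(p,L,t,\psi_{p,t}(w))$ works directly. Your $\chi$ is nevertheless correct: $\phi$ is always an embedding because $T_{p,t}$ is onto, so your worry that it might fail to be an immersion does not arise. That step simply reproves the lemma following \Cref{def fonctions lisses unites}.
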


\begin{remarque}
It is left to the reader to check that the algebra $\ci(\deformation{G}{\module F})$ defines a quasi-Lie structure on $\deformation{G}{\module F} \rightrightarrows \deformation{\units G}{\module F}$, in the sense of \cite[Section 1.2]{Mohsen26}.
\end{remarque}

The following proposition is straightforward by construction of $\exp(Y)$.

\begin{proposition}
Let $Y \in \cci(\deformation{\units G}{\module F}, \deformation{\bundle A G}{\module F})$ be of the form of \Cref{prop flot s fibres}.
\begin{enuminthm}
\item The homeomorphism $\exp(Y)$ is smooth, in the sense that a map $f: \deformation{M}{\module F} \rightarrow \C$ is smooth if and only if $f \circ \exp_\rho(Y) $ is smooth. 
\item The section $Y$ induces a well defined linear map $Y: \ci(\deformation{G}{\module F}) \rightarrow \ci(\deformation{G}{\module F})$ by the formula
\begin{equation}\label{derivation fonctions algebroide}
(Y \cdot f)(\gamma) \coloneqq \left.\frac{d}{ds}\right|_{s=0} f(\exp(sY) \gamma).
\end{equation}
\end{enuminthm}
\end{proposition}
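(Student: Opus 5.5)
The plan is to reduce everything to the local lift constructed in the proof of \Cref{prop flot s fibres}, namely the commutative diagram \eqref{eq relevement champs de vecteurs}. Fix a generating family $\mathbb X = (U, (X_i)_{i \in I}, k)$ with compactly supported $X_i$'s, and write $Y_{|\beta^{-1}(U \times \R_+)} = \sum_i (c_i \circ \beta)\theta_{k(i)}(X_i)$. Let $\hat Z$ be the vector field on $\mathcal{U}'$ obtained from the lift $\tilde Z$ of \Cref{lemme lift champs de vecteurs Exp}. The vector field $\tilde Z$ is a genuine smooth vector field on the open set $\mathcal{U} \subseteq U \times \R_+ \times \R^I$, tangent to the fibres $\{(p,t)\} \times \R^I$. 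Hence $\exp(s\hat Z)(a,v) = (a, \tilde v_s(\beta(a),v))$, where $(p,t,v,s) \mapsto \tilde v_s(p,t,v)$ is smooth on its (open) domain. The identity $\Exp \circ \exp(s\hat Z) = \exp(sY) \circ \Exp$ holds for all small $s$, because $sY$ corresponds to the coefficients $sc_i$ and $\tilde Z$ depends linearly on $(c_i)$.

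For the first point, let $f \in \ci(\deformation{G}{\module F})$. On $G \times \R_+^*$, the map $\exp(Y)$ is the flow of a smooth right equivariant vector field, so $f \circ \exp(Y)$ is smooth there. For the condition on $\Exp$, the lemma following \Cref{def fonctions lisses groupoide} lets us check only one generating family near each point. We have $(f \circ \exp(Y)) \circ \Exp = (f \circ \Exp) \circ \exp(\hat Z)$ on $\mathcal{U}'$. Now $f \circ \Exp = \tilde f \circ (\iota \times \id)$ with $\tilde f$ smooth on $U \times \Grass(\R^I) \times \R_+ \times \R^I$, and $\exp(\hat Z)$ acts as $(\iota(a), v) \mapsto (\iota(a), \tilde v_1(\beta(a), v))$. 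The composite is therefore again the restriction of a smooth function, namely $(p,L,t,v) \mapsto \tilde f(p,L,t,\tilde v_1(p,t,v))$.

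The main obstacle is that $\mathcal{U}'$ is only a neighbourhood of $\beta^{-1}(U \times \{0\}) \times \R^I$, not the full domain of $\Exp$. I would handle it as in \Cref{remarque restriction topologie}: away from $t = 0$, smoothness is already given by the $G \times \R_+^*$ part, and near $t = 0$ it is given by $\mathcal{U}'$. Gluing the two with a partition of unity in $t$ on the ambient manifold $U \times \Grass(\R^I) \times \R_+ \times \R^I$ produces a global smooth extension. The converse direction follows by applying the same argument to $-Y$, since $\exp(-Y)$ is the inverse of $\exp(Y)$.

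For the second point, consider the smooth function $(p,L,t,v,s) \mapsto \tilde f(p,L,t,\tilde v_s(p,t,v))$ on the ambient space times $\R$, locally near $s = 0$. Differentiating it in $s$ at $s=0$ gives a smooth function. Its restriction through $\iota \times \id$ is $(Y\cdot f)\circ\Exp$, which shows that condition \ref{pullback exponentielle lisse} of \Cref{def fonctions lisses groupoide} holds for $Y \cdot f$. On $G \times \R_+^*$, $Y \cdot f$ is just the derivative of $f$ along $Y_{|\units G \times \R_+^*}^{G \times \R_+^*}$, which is smooth. Continuity of $Y\cdot f$ follows from these two local descriptions. Linearity of $f \mapsto Y \cdot f$ is immediate from the defining formula \eqref{derivation fonctions algebroide}.
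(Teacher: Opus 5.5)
Your proposal is correct and is the argument the paper has in mind when it calls this ``straightforward by construction of $\exp(Y)$'': smoothness of $f\circ\exp(Y)$ and of $Y\cdot f$ is read off from the smooth fibrewise flow of the lift $\hat Z$ in \eqref{eq relevement champs de vecteurs}, using that $\tilde Z$ is linear in the $c_i$ and that $\exp(-Y)=\exp(Y)^{-1}$. One small point to tighten is the gluing step: the domain of the flow of $\tilde Z$ is a neighbourhood of $\{t=0\}$ that need not contain a uniform strip $\{t<\varepsilon\}$, since it shrinks as $|v|\to\infty$, so you should use a partition of unity subordinate to the open cover of $U\times\Grass(\R^I)\times\R_+\times\R^I$ rather than one depending on $t$ alone.
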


\begin{remarque}
Replacing smooth functions by smooth half densities, one easily shows that \eqref{derivation fonctions algebroide} still holds and that $Y\cdot (f*g) = (Y \cdot f)*g$. Moreover, denoting $Y* f \coloneqq Y \cdot f$, one defines similarly a right convolution $f * Y$ and show that $(f * Y) * g = f* (Y *g)$; $Y$ thus defines a r,s-distribution, in the sense of \cite[Section 1.6]{Mohsen26}. We aim to study these properties in a forthcoming article.
\end{remarque}

\subsection{Debord-Skandalis action}
Recall that, for all $p \in M$, there is a canonical action of $\R_+^*$ on $\gr(\module F)_p$ denoted by $\alpha$ and defined by \eqref{action de R sur la localisation}. This action is easily seen to be a Lie algebra homomorphism, hence it exponentiates as an action by group homomorphisms on $\Gr(\module F)_p$. The group $\R_+^*$ also canonically acts smoothly on $\deformation{\units G}{\module F}$ by \Cref{def action debord skandalis}. 

\begin{definition}
We still denote by $\alpha$ the action $\R_+^* \curvearrowright \deformation{G}{\module F}$ defined by 
\begin{equation}
\begin{aligned}
\alpha_\lambda(\gamma,t) &= (\gamma, \lambda^{-1} t) && \text{for} \; (\gamma, t) \in G \times \R_+^* \\
\alpha_\lambda(p, g \mod e^L, 0) &= (p, \alpha_\lambda(g) \mod e^{\alpha_\lambda(L)}, 0) && \text{for} \; (p, g \mod e^L) \in \Osc_{\module F}(G)
\end{aligned}
\end{equation}
for all $\lambda \in \R_+^*$. The action $\alpha$ is still called the \textbf{Debord-Skandalis action}.
\end{definition}

\begin{lemme}
The action $\alpha$ is continuous. Moreover it is smooth in the sense that, for all $f \in \ci(\deformation{G}{\module F})$, the map $(g, \lambda) \mapsto f(\alpha_\lambda(g))$ belongs to $\ci(\deformation{G}{\module F} \times \R_+^*)$.
\end{lemme}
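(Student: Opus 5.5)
The natural approach mirrors the proof of \Cref{action alpha bien def}: lift $\alpha$ through the charts $\Exp^{\mathbb X}$ to an explicit smooth action upstairs, and then descend. Fix a generating family $\mathbb X = (U, (X_i)_{i\in I}, k)$ with compactly supported $X_i$'s and let $\tilde\alpha_\lambda(e_i) = \lambda^{k(i)} e_i$ on $\R^I$. On $\beta^{-1}(U\times\R_+)\times\R^I$ set
\begin{equation*}
\hat\alpha_\lambda(a, v) \coloneqq (\alpha_\lambda(a), \tilde\alpha_{\lambda}(v)).
\end{equation*}
The first step is to check that $\Exp\circ\hat\alpha_\lambda = \alpha_\lambda\circ\Exp$.

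For $t>0$ this commutation comes from the identity $\natural_{\lambda^{-1}t}(\tilde\alpha_\lambda v) = \natural_t(v)$. Both sides are then $(\exp(\natural_t(v))p, \lambda^{-1}t)$.

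For $t=0$ we use $\natural_{p,0}\circ\tilde\alpha_\lambda = \alpha_\lambda\circ\natural_{p,0}$, which holds because $[X_i]_{p,k(i)}$ is homogeneous of degree $k(i)$. Since $\alpha_\lambda$ is a Lie algebra automorphism of $\gr(\module F)_p$, it follows that $e^{\natural_{p,0}(\tilde\alpha_\lambda v)} = \alpha_\lambda(e^{\natural_{p,0}(v)})$. The coset is taken modulo $e^{\alpha_\lambda(L)}$, and $\alpha_\lambda(L)$ is exactly the $L$-component of $\alpha_\lambda(p,L,0)$ by \Cref{action alpha bien def}. So the diagram commutes, and in particular $\alpha$ is well defined as a map into $\deformation{G}{\module F}$. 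It is also an action of groupoid automorphisms, which is immediate on each piece.

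For continuity of $\alpha$, I would show that $(\gamma,\lambda)\mapsto\alpha_\lambda(\gamma)$ is continuous on $\deformation{G}{\module F}\times\R_+^*$. On $G\times\R_+^*\times\R_+^*$ this is clear. Near $\Osc_{\module F}(G)\times\{0\}$, the map $\hat\alpha$ is continuous on $\beta^{-1}(U\times\R_+)\times\R^I\times\R_+^*$ by \Cref{action alpha bien def}.

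The delicate point, which I expect to be the main obstacle, is passing from continuity upstairs to continuity downstairs. The topology on $\deformation{G}{\module F}$ is a final topology, and products with $\R_+^*$ need not preserve quotient maps in general. I would handle this as follows:
\begin{enumerate}
\item Use the open set $\mathcal V$ of \eqref{eq U Exp ouverte}, on which $\Exp$ is an open map by the proof of \Cref{theoreme topologie groupoide}. Then $\Exp\times\id_{\R_+^*}$ is open on $\mathcal V\times\R_+^*$, and hence is a quotient map onto its image.
\item Note that $\hat\alpha$ does not preserve $\mathcal V$: in \eqref{eq U Exp ouverte} the condition is $(p,\tilde\alpha_t(v))\in\mathcal V_1$, and $\tilde\alpha_{\lambda^{-1}t}(\tilde\alpha_\lambda v) = \tilde\alpha_t(v)$, so $\hat\alpha_\lambda$ sends $\mathcal V$ into the analogous set for $t>0$ and into $\beta^{-1}(U\times\{0\})\times\R^I$ for $t=0$. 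This seems fine, but the target only needs to be some subset of the domain of $\Exp$, which is the whole of $\beta^{-1}(U\times\R_+)\times\R^I$. So $\alpha\circ(\Exp\times\id)=\Exp\circ\hat\alpha$ is continuous.
\item Since $\Exp\times\id$ is open, it follows that $\alpha$ is continuous on $\Exp(\mathcal V)\times\R_+^*$.
\end{enumerate}
These open sets, together with $G\times\R_+^*$, cover $\deformation{G}{\module F}$ by \Cref{remarque restriction topologie}, which gives global continuity.

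For smoothness, take $f\in\ci(\deformation{G}{\module F})$. On $G\times\R_+^*\times\R_+^*$, the function $f(\alpha_\lambda(\gamma,t)) = f(\gamma,\lambda^{-1}t)$ is smooth. For the chart condition we compute
\begin{equation*}
(f\circ\alpha)\circ(\Exp\times\id)(a,v,\lambda) = (f\circ\Exp)(\alpha_\lambda(a),\tilde\alpha_\lambda(v)).
\end{equation*}
By \Cref{def fonctions lisses groupoide}, $f\circ\Exp$ is the restriction through $\iota$ of a smooth $\tilde f$ on $U\times\Grass(\R^I)\times\R_+\times\R^I$. Moreover $\iota\circ\alpha_\lambda = \tilde\alpha_\lambda\circ\iota$, as in the proof of \Cref{action alpha bien def}. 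Hence the expression above equals $\tilde f(p,\tilde\alpha_\lambda L,\lambda^{-1}t,\tilde\alpha_\lambda v)\circ\iota$, which is smooth in $(p,L,t,v,\lambda)$. Therefore $f\circ\alpha\in\ci(\deformation{G}{\module F}\times\R_+^*)$ by the analogue of \Cref{def fonctions lisses groupoide} with the parameter manifold $N=\R_+^*$.
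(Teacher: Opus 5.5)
Your proposal is correct and follows the paper's proof, which likewise lifts $\alpha$ through $\Exp$ via $(a,v)\mapsto(\alpha_\lambda(a),\tilde\alpha_\lambda(v))$. You additionally supply the details the paper leaves implicit: the commutation check, the quotient-map issue for the product with $\R_+^*$, and the chart computation for smoothness. One small slip is that your own computation $\tilde\alpha_{\lambda^{-1}t}\circ\tilde\alpha_\lambda=\tilde\alpha_t$ shows that $\hat\alpha_\lambda$ does preserve $\mathcal V$, contrary to what you state, though your argument does not depend on this.
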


\begin{proof}
Let $(U, (X_i)_{i \in I}, k)$ be a generating family and define $\tilde \alpha$ the linear action of $\R_+^*$ on $\R^I$ given by $\tilde \alpha_\lambda(e_i) = \lambda^{k(i)} e_i$. We still denote $\tilde \alpha$ the smooth action of $\R_+^*$ on $\beta^{-1}(U \times \R_+) \times \R^I$, given by $\tilde \alpha_\lambda (a, v) = (\alpha_\lambda(a), \tilde \alpha_\lambda(v))$. Then $\tilde \alpha$ lifts $\alpha$ through $\Exp$, hence $\alpha$ is continuous and smooth.
\end{proof}

\section{Examples}\label{section exemples}
\subsection{Equiregular case}\label{ex equiregulier}
Let $\bundle E \rightarrow M$ be a smooth vector bundle. Given an increasing family of subbundles $\{0\} = \bundle F^0 \subseteq \bundle F^1 \subseteq \cdots \subseteq \bundle F^N = \bundle E$, one can define a filtration of depth $N$ by
\begin{equation}
\module F^k = \ci(M, \bundle F^k),\quad k = 0, \dots, N.
\end{equation}
We call \textbf{equiregular} such a filtration. In this case the evalutation maps $\module F^k \rightarrow \bundle F^k_p$,  $X \mapsto X(p)$, where $\bundle F^k_p$ denotes the fiber of $\bundle F^k$ at $p$, induce an isomorphism
\begin{equation}
\gr(\module F)_p \simeq \bigoplus_{k = 1}^N \frac{\bundle F^k_p}{\bundle F^{k-1}_p}.
\end{equation}
In particular, the dimension of $\gr(\module F)_p$ is locally constant (equal to $\dim(\bundle E_p)$). Moreover, a triplet $(U, (X_i)_{i \in I}, k)$ is a generating family if and only if $\{X_i \; |\; k(i) \leq l \}$ generates the bundle $\bundle F^l_{|U}$ for all $l = 1, \dots, N$. 

Let $p \in U$: up to reduce the open set $U$ around $p$ and to extract a subfamily of $(X_i)_{i \in I}$, one can assume that, for all $ l = 1, \dots, N$, the set $\{X_i \; |\; k(i) \leq l \}$ forms a frame of $\bundle F^l$ over $U$. In this case the maps $\natural_{q,t}$ are isomorphisms for all $q \in U$, hence $\blowup{M}{\module F}_p = \{\{0\}\}$. One thus simply gets
\begin{align}
\blowup{M}{\module F} &= M, \\
\deformation{M}{\module F} &= M \times \R_+, \\
\Lie{osc}_{\module F}(\bundle E) &= \bigsqcup_{p \in M} \gr(\module F)_p
\end{align}
and the topology given by \Cref{def topo blup} is just the product one on $M \times \R_+$. Furthermore, the smooth functions defined by \Cref{def fonctions lisses unites} are simply the smooth functions for the structure of product manifold (with boundary) on $M \times \R_+$. The "smooth structure" on the deformation bundle $\deformation{\bundle E}{\module F} \rightarrow M \times \R_+$ is also a structure of smooth vector bundle in the usual sense (over a manifold with boundaries) and the smooth sections in the sense of \Cref{def structure lisse fibre} are exactly the smooth sections in the usual sense.

In the case where $\bundle E = \bundle T M$ and $\module F$ is a Lie filtration for the canonical Lie algebroid structure $(\bundle T M, [ \cdot , \cdot ], \id)$, then $\deformation{M \times M}{\module F} \rightrightarrows M \times \R_+$ is the groupoid $\mathbb{T}_{\bundle F} M$ defined in \cite{vanErpYuncken16}. More generally, if $G$ is a Lie groupoid and $\module F$ an equiregular Lie filtration on $(\bundle A G, [ \cdot, \cdot], \rho)$, then $\deformation{G}{\module F} \rightrightarrows \units G \times \R_+$ is the groupoid $\mathbb{A}_{\bundle F}G$ defined in \cite[Section 9]{vanErpYuncken16}.

\subsection{Formal Baouendi-Grushin filtration}

Let $N \geq 2$, $M = \R^2$ and $\bundle E \rightarrow M$ a trivial bundle of rank $2$, generated by a global basis of sections denoted by $(A,B)$. One can generalize the Baouendi Grushin filtration of \Cref{exemple Baouendi classique} by defining the filtration
\begin{equation}
\module F^k = \left\{f(x,y) A + g(x,y) y^{N -k}B  \; | \; f, g \in \ci(M, \R) \right\} ,\quad k = 1, \dots, N.
\end{equation}
All the computations of \Cref{exemple Baouendi classique} and \Cref{exemple Baouendi blup} can be adapted to this filtration, replacing $(\partial_y, \partial_x)$ by $(A,B)$ everywhere. Let us summarize them. Let $p =(x_0, y_0) \in M$:

\begin{align}
\gr(\module F)_p &= \begin{cases}
\Span([A]_{p, 1}, [B]_{p, 1}) & \text{if} \; y_0 \neq 0\\
\Span([A]_{p, 1}, [y^{N-1}B]_{p, 1}, [y^{N-2}B]_{p, 2}, \dots, [B]_{p, N}) & \text{if} \; y_0 = 0,
\end{cases}\\
\blowup{M}{\module F}_p  &= \begin{cases}
\{\{0\}\} & \text{if} \; y_0 \neq 0\\
\{L_\theta^p \; |\; \theta \in \mathbb{P}^1(\R) \} & \text{if} \; y_0 = 0
\end{cases}
\end{align}
with, for $y_0 = 0$:
\begin{equation}
L_{[a,b]}^p = \Ker\left(\sum_{i=1}^N a^{N-i}b^{i-1} [y^{N-i}B]_{p,i}^* \right) \cap \Ker \left( [A]_{p,1}^*\right) \in \Grass(\gr(\module F)_p)
\end{equation}
where $([A]_{p,1}^*, [y^{N-1}B]_{p,1}^*, \dots, [B]_{p,N}^* )$) denotes the dual basis of $\gr(\module F)_p^*$ and $[a,b] \in (\R^2 \setminus \{(0,0)\})/\R^* = \mathbb{P}^1(\R)$. Moreover, the topology of $\deformation{M}{\module F}$ is described by the following topological embedding:
\begin{equation}\label{plongement topo def projectif}
\begin{aligned}
\deformation{M}{\module F} & \hookrightarrow M \times \mathbb{P}^1(\R) \times \R_+ &&  \\
((x,y),t) & \mapsto ((x,y), [y,t], t) && \text{if} \; t \neq 0 \\
((x,y), \{0\}, 0) & \mapsto ((x,y), [1,0], 0) &&\text{if} \; y \neq 0 \\
((x,0), L_\theta^{(x,0)}, 0) & \mapsto ((x,0), \theta, 0).&& \text{if} \; y = 0,\; t = 0
\end{aligned}
\end{equation}
Finally, for $p \in \R \times \{0\}$ and $L^p_{[a,b]} \in \blowup{M}{\module F}_p$, one computes from \eqref{action de R sur la localisation} that $\alpha_\lambda(L_{[a,b]}^p) = (L_{[a, \lambda^{-1}b]}^p)$. The action of $\R_+^*$ thus has three orbits in $\blowup{M}{\module F}_p$ which are $\{L_{[1,0]}^p\}$, $\{L_{[0,1]}^p\}$ and $\{L_{[a,1]}^p \; |\; a \ \neq 0 \}$.

\subsection{Different Lie algebroid structures}\label{section algebroides differentes Baouendi}

In the case of the formal Baouendi-Grushin filtration, the bundle $\bundle E$ can be endowed with different Lie algebroid structures for which $\module F$ is a Lie algebroid filtration. Let us describe three examples and detail for each of them the action by conjugation $\Gr(\module F)_p \curvearrowright \blowup{M}{\module F}_p$, using \Cref{blup stable par conjugaison}. As for \Cref{ancre paires} let $p = (x_0, 0) \in \R \times \{0\}$ (it is the only interesting case), $\theta = [a,b] \in \mathbb{P}^1(\R)$ and $v = \sum_i b_i [y^{N-i}B]_{p,i} + c[A]_{p,1} \in \gr(\module F)_p$; we will compute $e^v L_\theta^p e^{-v}$. 

For this purpose, consider $(p_n, t_n) = ((x_n, y_n), t_n) \in M \times \R_+^*$ a sequence that converges to $ (p, L_\theta^p, 0) \in \blowup{M}{\module F} \times \{0\}$ (for the topology of $\deformation{M}{\module F}$). By \eqref{plongement topo def projectif} it is equivalent to say that $(p_n,t_n, [y_n, t_n]) \rightarrow (p,0, \theta)$ for the topology of $M \times \R_+ \times \mathbb{P}^1(\R)$. Finally, let $f$ be a compactly supported function on $M$ with value $1$ in a neighbourhood of $p$ and consider the section of $\deformation{\bundle E}{\module F}$: $X = \sum_i b_i \theta_i(f y^{N-i}B) + c\theta_1(f A)$ ($\theta_i$ being defined by \eqref{eq theta champs de vecteurs}). Nothing depends on a choice of Lie algebroid structure on $\bundle E$ so far.

\begin{enumerate}[label =\roman*)]
\item\label{algebroide paires ex baouendi} We can take the anchor $\rho$ to be the isomorphism $\rho(A) = \partial_y$ and $\rho(B) = \partial_x$. The Lie bracket is then defined by $\rho([X,Y]) = [\rho(X), \rho(Y)]$. In that case, the Lie structure on $\gr(\module F)_p$ and the action $\Gr(\module F)_p \curvearrowright \blowup{M}{\module F}_p$ is the same as in \Cref{ancre paires}, namely the Lie bracket is
\begin{equation}
\begin{aligned}
[[A]_{p,1}, [y^{N-i}B]_{p,i}] &= (N-i)[y^{N-i-1}B]_{p,i+1} && \forall i = 1, \dots, N-1\\
[[A]_{p,1}, [B]_{p,N}] &= [[y^{N-i}B]_{p,i}, [y^{N-j}B]_{p,j}] = 0 && \forall i,j = 1, \dots, N.
\end{aligned}
\end{equation}
and the action is
\begin{equation}
e^vL_{[a,b]}^p e^{-v} = L_{[a+bc, b]}^p.
\end{equation}

\item\label{algebroide b ex baouendi} We can take the anchor $\rho^b$ defined by $\rho^b(A) = \partial_y$ and $\rho^b(B) = x\partial_x$. The map $\rho^b$ is not fiberwise injective but $\rho^b: \cci(M, \bundle E) \rightarrow \cci(M, \bundle T M)$ is injective. Its image is equal to $\{X \in \cci(M, \bundle TM) \; |\; X(0, y) \in \R \partial_y \; \forall y \in \R \}$ and is thus stable under bracket, therefore the relation $\rho^b([X,Y]) = [\rho^b(X), \rho^b(Y)]$ defines a Lie bracket on $\cci(M, \bundle E)$.

One easily checks that the Lie algebra structure on $\gr(\module F)_p$ is the same as the one of \ref{algebroide paires ex baouendi}. Moreover, let $(p_n', t_n) = \exp_{\rho^b}(X)(p_n, t_n)$ with $p_n' = (x_n', y_n')$. We still have $y_n' = y_n + t_n c$, hence $(p_n', t_n) \rightarrow (p, L_{[a + bc,b]}^p, 0)$ in $\deformation{M}{\module F}$: the action by conjugation is also the same as in \ref{algebroide paires ex baouendi}.

\item\label{algebroide edge ex baouendi} We can take the anchor $\rho^e$ defined by $\rho^e(A) = x \partial_y$ and $\rho^e(B) = x\partial_x$. The map $\rho^e$ is also bijective between sections (with image $\{X \in \cci(M, \bundle TM) \; |\; X(0, y) = 0 \; \forall y \in \R \}$) and the bracket is defined the same way as in \ref{algebroide b ex baouendi}. The bracket on $\gr(\module F)_p$ is still vanishing for $p \in \R \times \R^*$, and for $p = (x_0,0)$ one computes:

\begin{equation}
\begin{aligned}
[[A]_{p,1}, [B_i]_{p,i}] &= x_0 (N-i)[B_{i+1}]_{p,i+1} && \forall i = 1, \dots, N-1\\
[[A]_{p,1}, [B_N]_{p,N}] &= [[B_i]_{p,i}, [B_j]_{p,j}] = 0 && \forall i,j = 1, \dots, N.
\end{aligned}
\end{equation}

Note in particular that $\Gr(\module F)_{(0,0)}$ is abelian. Set $(p_n', t_n) = \exp_{\rho^e}(X)(p_n, t_n)$ with $p_n' = (x_n', y_n')$. There is no easy formula for $y_n'$ since it requires to solve a non linear ODE. However, one can exhibit, using Gronwall lemma, a constant $K > 0$ such that $|y_n' - (y_n + t_n c x_n)| \leq K t_n^2 x_n$ hence $(p_n', t_n) \rightarrow (p, L^p_{[a + bcx_0, b]}, 0)$ in $\deformation{M}{\module F}$. The action by conjugation thus becomes
\begin{equation}
e^vL_{[a,b]}^p e^{-v} = L_{[a+bcx_0, b]}^p.
\end{equation}
\end{enumerate}

The structure of \ref{algebroide paires ex baouendi} corresponds to the generalized Baouendi-Grushin filtration on the tangent bundle, see \cite[Example 1.9]{Mohsen24}, while \ref{algebroide b ex baouendi} and \ref{algebroide edge ex baouendi} are variations of this example respectively on the b-tangent bundle and the 0-tangent bundle, see \Cref{section b calcul} and \Cref{section 0 calcul}.

\subsection{Reminders on manifolds with boundary}\label{section rappels bord}

Let $(M, \partial M)$ be a manifold with boundary. To study the ellipticity of certain differential operators with boundary conditions, Melrose introduced a Lie algebroid $(\bundle T^b M \rightarrow M,[\cdot, \cdot], \rho^b)$ called the \textbf{b tangent bundle}, see \cite{Melrose93}, satisfying 
\begin{equation}
\rho^b(\ci(M, \bundle T^bM)) = \{ X \in \ci(M, \bundle TM) \; |\; X_{|\partial M} \in \bundle T \partial M \}.
\end{equation}
In \cite{Monthubert} the author showed that $(\bundle T^b M ,[\cdot, \cdot], \rho^b)$ can be realised as the Lie algebroid of a certain Lie groupoid over $M$. We will denote this groupoid $(M\times M)^b \rightrightarrows M$ and call it the \textbf{Monthubert groupoid}. More generally, given any submersion $p: \partial M \rightarrow B$, one can build a Lie groupoid over $M$ with Lie algebroid $(\bundle T^eM, [\cdot, \cdot], \rho^e)$ called the edge tangent bundle, satisfying 
\begin{equation}
\rho^e(\ci(M, \bundle T^e M)) = \{ X \in \ci(M, \bundle TM) \; |\; X_{|\partial M} \in \Ker dp \}.
\end{equation}
In general this Lie groupoid is $(M \times M)^e = \Blup_{r,s}(M \times M, \partial M {}_p \! \times \! {}_p \partial M)$, see \cite{DebordSkandalis17} for a modern description of blowup groupoids and \cite{Rochon12} for applications to analysis. 

The b-tangent bundle corresponds to the case $B = \{ pt\}$. Another interesting example is the case $B = \partial M$ and $p = \id$, corresponding to the so called uniformly degenerate vector fields, see \cite{MazzeoMelrose98}. As explained in the introduction, many results of Melrose on manifolds with boundary can be recovered using pseudodifferential calculus on groupoids.

To avoid dealing with the boundary one can replace $M$ by $\tilde M = M_- \cup M_+$, the manifold (without boundary) obtained by gluing two copies $M_\pm$ of $M$ over $\partial M$. The boundary $\partial M$ thus becomes a submanifold of $\tilde M$ of codimension $1$. All the above constructions can be applied replacing $(M, \partial M)$ by a pair $(M, N)$ where $M$ is a manifold without boundary and $N \subset M$ a submanifold of codimension $1$; it will be our setting in the following.

\subsection{Baouendi-Grushin for the b-calculus}\label{section b calcul}

Choose a smooth function $x: M \rightarrow \R$, with non vanishing differential on $N$, such that $N = x^{-1}(0)$\footnote{It can always be built using a tubular neighbourhood.}. The function $x$ is called a \textbf{defining function} for $N$. Using $x$ one can find an open neighbourhood of $N$ in $ M$ which is diffeomorphic to $\R \times N$. To simplify the description of $(M \times M)^b$ we will assume that $M = \R \times N$, the submanifold of interest being $\{0\} \times N \simeq N$. Denoting by $x$ the $\R$ variable and by $y$ the $N$ variable, the groupoid $(M \times M)^b \rightrightarrows M$ is
\begin{itemize}
\item[•] $(M \times M)^b =  \R \times \R^* \times N \times N$ and $M \hookrightarrow (M \times M)^b,\; (x,y) \mapsto (x,1, y, y)$;
\item[•] $s(x,\lambda, y', y) = (x,y)$ and $r(x,\lambda, y', y) = (y', \lambda x)$;
\item[•] $(\lambda x,\mu, y'', y') \cdot (x,\lambda, y', y) = (x, \lambda \mu, y'',y)$;
\item[•] $(x,\lambda, y', y)^{-1} = (\lambda x, \lambda^{-1}, y, y') $.
\end{itemize}

\begin{remarque}\label{Monthubert est canonique}
In other words, when $M = \R \times N$, $(M \times M)^b$ is a direct product between the action groupoid $\R \rtimes \R^*$ and the pair groupoid $N \times N$. If $(M,N)$ is any pair with $\codim(N) = 1$, the groupoid $(M \times M)^b$ is isomorphic to a gluing between $(\R \rtimes \R^*) \times (N \times N)$ and the pair groupoid $(M \setminus N) \times (M \setminus N)$, the gluing depending on the defining function $x$. Nevertheless, as explained above there is a general definition $(M \times M)^b = \Blup_{r,s}(M \times M, N \times N)$ which does not depend on the choice of a defining function, see \cite{DebordSkandalis17}.
\end{remarque}

Denote by $\lambda$ the $\R^*$ coordinate: the vector field $\lambda \partial_\lambda \in \ci((M \times M)^b, \Ker(ds))$ is right equivariant, denote by $\partial_\lambda \in \ci(M, \bundle T^b M)$ its restriction to the units. One thus identifies $\bundle T^b M \simeq \bundle TN \oplus \R \cdot \partial_\lambda$ and one easily checks that, under this identification, the anchor map $\rho^b: \bundle T^b M \rightarrow \bundle T M = \bundle T N \oplus \R \cdot \partial_x$ is the identity on $\bundle T N$ and sends $\partial_\lambda$ to $x \partial_x$.

Let $N = \R$, $K > 0$ and $\module F^k = \left\{f(x,y) y^{K -k}\partial_\lambda + g(x,y) \partial_y \; | \; f, g \in \ci(M, \R) \right\}$, $k = 1, \dots, K$. This filtration is the one described in \Cref{section algebroides differentes Baouendi} in the case \ref{algebroide b ex baouendi}, setting $A = \partial_y$ and $B = \partial_\lambda$.

\subsection{Baouendi-Grushin for the 0-calculus}\label{section 0 calcul}
As in the previous section, assume $M = \R \times N$ and denote $x$ the $\R$ variable and $y$ the $N$ variable. Moreover assume $N = \R^d$ and identify $\bundle T N = N \times \R^d$. Let us describe the edge groupoid $(M \times M)^e \rightrightarrows M$ associated to the constant map $p: N \rightarrow \{pt\}$, as described in the beginning of \Cref{section rappels bord}. 

\begin{itemize}
\item[•] $(M \times M)^e =  \R \times \R^* \times \bundle T N $ and $M \hookrightarrow (M \times M)^b,\; (x,y) \mapsto (x, 1, y, 0)$;
\item[•] $s(x, \lambda, y, \xi) = (x,y)$ and $r(x, \lambda, y, \xi) = (\lambda x, y + x \xi)$;
\item[•] $(\lambda x, \mu, y + x \xi, \eta) \cdot (x,\lambda, y, \xi) = (x, \lambda \mu, y , \xi +  \lambda \eta)$;
\item[•] $(x,\lambda, y, \xi)^{-1} = (\lambda x, \lambda^{-1}, y + x \xi, - \lambda^{-1} \xi) $.
\end{itemize}

\begin{remarque}
If one chooses a riemannian metric on $N$, one may drop the assumption $N = \R^d$, set $(M \times M)^e = \R \times \R^* \times \bundle T N$ and replace $y + \lambda \xi$ by $\exp_y(\lambda \xi) $. For a general pair $(M, N)$ with $\codim(N)=1$, the groupoid $(M \times M)^e$ (still in the case $p: N \rightarrow \{pt\}$) is isomorphic to a gluing between $\R \times \R^* \times \bundle T N$ with the above structure and the pair groupoid $(M \setminus N) \times (M \setminus N)$, the gluing depending on the defining function $x$. Nevertheless, as in \Cref{Monthubert est canonique}, there is a canonical definition $(M \times M)^e = \Blup_{r,s}(M \times M, N {}_p \! \times \! {}_p N)$, independent of $x$.
\end{remarque}

Denote by $\lambda$ the $\R^*$ coordinate and identify $\bundle T N \simeq N \times \R^d$. As in \Cref{section b calcul} one may then identify $\bundle T^eM \simeq \bundle T N \oplus \R \partial_\lambda$ where $\lambda$ still denotes the $\R^*$ coordinate. The map $\rho^e : \bundle T^e M \rightarrow \bundle T M = \bundle T N \oplus \R \partial_x$ then sends $\partial_{y^i}$ to $x \partial_{y^i}$, $i = 1, \dots, d$, and $\partial_\lambda$ to $x \partial_x$.

Let $\module F^k = \left\{f(x,y) y^{K -k}\partial_\lambda + g(x,y) \partial_y \; | \; f, g \in \ci(M, \R) \right\}$, $k = 1, \dots, K$. This filtration is the one described in \Cref{section algebroides differentes Baouendi} in the case \ref{algebroide edge ex baouendi}, setting $A = \partial_y$ and $B = \partial_\lambda$.

\printbibliography[heading = bibintoc]

\end{document}